\title[Global CR invariants]
{Renormalized characteristic forms of the Cheng--Yau metric and global CR invariants}
\author{TAIJI MARUGAME}
\date{}
\newcommand\R{\mathbb{R}}
\newcommand\C{\mathbb{C}}
\newcommand\Ric{{\rm Ric}}
\renewcommand\a{\alpha}
\renewcommand\b{\beta}
\newcommand\g{\gamma}
\renewcommand\d{\delta}
\newcommand\e{\epsilon}
\renewcommand\r{\rho}
\renewcommand\th{\theta}
\newcommand\br{\boldsymbol{\rho}}
\newcommand\bh{\boldsymbol{h}}
\newcommand\bth{\boldsymbol{\theta}}
\newcommand\U{\Upsilon}
\newcommand\pa{\partial}
\newcommand\ol{\overline}
\newcommand{\calE}{\mathcal{E}}
\newcommand{\wt}{\widetilde}
\newcommand{\wh}{\widehat}
\newtheorem{lem}{Lemma}[section]
\newtheorem{thm}[lem]{Theorem}
\newtheorem{theorem}[lem]{Theorem}
\newtheorem{prop}[lem]{Proposition}
\newtheorem{cor}[lem]{Corollary}
\theoremstyle{definition}
\newtheorem{rem}[lem]{\it Remark}
\numberwithin{equation}{section}
\address{Mathematical Analysis Team, RIKEN Center for Advanced Intelligence Project (AIP), 1-4-1 Nihonbashi, Chuo-ku, Tokyo 103-0027, Japan}
\address{Department of Mathematics, Graduate School of Science, Osaka University, 1-1 Machikaneyama-cho Toyonaka Osaka 560-0043, Japan}
\email{taiji.marugame@riken.jp}
\keywords{strictly pseudoconvex domains; the Cheng--Yau metric; renormalized characteristic forms; CR invariants; $\mathcal{I}'$-curvatures} 
\subjclass[2010]{Primary~32V05, Secondary~53A55}
\begin{document}

\begin{abstract} 
For each invariant polynomial $\Phi$, we construct a global CR invariant via the renormalized characteristic form of the Cheng--Yau metric on a strictly pseudoconvex domain. When the degree of $\Phi$ is 0, the invariant agrees with the total $Q'$-curvature. When the degree is equal to the CR dimension, we construct a primed pseudo-hermitian invariant $\mathcal{I}'_\Phi$ which integrates to the corresponding CR invariant. These are generalizations of the $\mathcal{I}'$-curvature on CR five-manifolds, introduced by Case--Gover. 
\end{abstract}
\maketitle

\section{Introduction}
The Cheng--Yau metric $g$ is a complete K\"ahler--Einstein metric on a bounded strictly pseudoconvex domain $\Omega\subset\C^{n+1}$, given by the K\"ahler form $-i\pa\ol\pa\log\r$ with a defining function $\r$ which solves the complex Monge--Amp\`ere equation \cite{ChY}. Since $g$ is biholomorphically invariant, one may try to construct biholomorphic invariants of $\Omega$ or CR invariants of the boundary $M$ by using geometric quantities of this metric. However, due to the singularity of $g$ at the boundary, we need some renormalization procedure to extract finite values from invariants of $g$.

Burns--Epstein \cite{BE2} introduced such a renormalization for the Levi-Civita connection of $g$. Let $\psi_i{}^j$ be the connection 1-forms of $g$ in a $(1, 0)$-coframe 
$\{\th^1, \dots, \th^{n+1}\}$. They defined the renormalized connection $\ol\nabla$ by the connection 1-forms
\begin{equation}\label{theta-intro}
\theta_i{}^j:=\psi_i{}^j+\frac{1}{\rho}(\rho_k\d_i{}^j+\rho_i \d_k{}^j)\theta^k\quad (\pa\r=\r_i\th^i),
\end{equation}
and showed that it extends to a linear connection on $\ol\Omega$. The transformation \eqref{theta-intro} is so-called a c-projective transformation \cite{CEMN}, and this renormalization procedure is an example of {\it c-projective compactifications}, introduced by \v Cap--Gover \cite{CapG}.

Burns--Epstein also defined the renormalized curvature, which is continuous up to $M$, by 
setting
\[
W_i{}^j:=\Psi_i{}^j+(g_{k\ol l}\d_i{}^j+g_{i\ol l}\d_k{}^j)\th^k\wedge \th^{\ol l},
\]
where $\Psi_i{}^j$ is the curvature form of $g$. Since $g$ satisfies ${\rm Ric}(g)+(n+2)g=0$, this agrees with both the c-projective Weyl curvature and the Bochner curvature of the Cheng--Yau metric. Also, this coincides with the $(1, 1)$-component of the curvature $\Theta_i{}^j$ of $\ol\nabla$. For an Ad-invariant polynomial $\Phi$ on $\mathfrak{gl}(n+1, \mathbb{C})$, they showed $\Phi(W)=\Phi(\Theta)$ and called it the {\it renormalized characteristic form}. When $\Phi$ has degree $n+1$, the integral of $\Phi(W)$ over $\Omega$ converges to a biholomorphic invariant of $\Omega$, which is called the {\it characteristic number} of the domain. By using the Chern--Simons transgression and the Lee--Melrose expansion of the Monge--Amp\`ere solution, they proved that this number is determined by local geometric data of the boundary.  
However, since the transgression is based on the global coordinates of $\C^{n+1}$, they needed a complicated topological procedure to relate the invariant to intrinsic CR geometry of $M$, and it does not provide a global CR invariant which can be written in terms of Tanaka--Webster curvature quantities.

As an exceptional case, they derived a Chern--Gauss--Bonnet formula for $c_2(\Theta)$ whose boundary term gives a global invariant of CR three-manifolds, called the {\it Burns--Epstein invariant} \cite{BE1}. The author \cite{Mar} generalized the renormalized Chern--Gauss--Bonnet formula and the Burns--Epstein invariant to higher dimensional cases in a more general setting: We consider a strictly pseudoconvex domain $\Omega$ in a complex manifold $X$ of dimension $n+1$. If the boundary $M$ admits a pseudo-Einstein contact form $\th$, there exists a global approximate solution $\r$ to the Monge--Amp\`ere equation such that $\theta=(i/2)(\pa\r-\ol\pa\r)|_{TM}$, and we can define the Cheng--Yau metric on $\Omega$ as a hermitian metric $g$ which agrees with $-i\pa\ol\pa\log\r$ near $M$; see \S\ref{ambient-CY} for detail. Let $\Theta$ be the curvature form of the renormalized connection defined via $\r$. Then we have
\begin{equation}\label{CGB}
\int_\Omega c_{n+1}(\Theta)=\chi(\Omega)+\int_M \Pi(R_{\a\ol\b\g\ol\mu}, A_{\a\b})\, \theta\wedge(d\theta)^n,
\end{equation}
where $\Pi$ is a linear combination of the complete contractions of polynomials in the Tanaka--Webster curvature and torsion without covariant derivatives. In the case of the approximate Cheng--Yau metric, $c_{n+1}(\Theta)$ agrees with $c_{n+1}(W)$ only near the boundary, so it depends on the choice of $\r$ or $\th$; see Proposition \ref{Theta-W}. Nevertheless, it is shown that the boundary integral in \eqref{CGB} is independent of the choice of $\th$ and gives a CR invariant of $M$.

In this paper, we show that the Burns--Epstein invariant given by \eqref{CGB} can be further decomposed into the sum of global CR invariants. We fix a Fefferman defining function $\r$ associated with a pseudo-Einstein contact form $\th$, and set $\omega:=-i\pa\ol\pa\log\r$. Then we can expand $c_{n+1}(\Theta)$ as 
\begin{equation}\label{chern-exp}
c_{n+1}(\Theta)=c_{n+1}(\Psi)+\sum_{m=0}^{n}\omega^{n+1-m}\wedge\Phi_m(\Theta),
\end{equation}
where $\Phi_m$ is an invariant polynomial of degree $m$. Our main theorem states that the finite part of the integral of each term in this expansion gives a CR invariant:
\begin{thm}\label{integral-inv}
Let $\Phi$ be an Ad-invariant homogeneous polynomial of degree $m\ (0\le m\le n)$ on $\mathfrak{gl}(n+1, \mathbb{C})$. Let $\r$ be the Fefferman defining function associated with a pseudo-Einstein contact form $\th$. Then we have 
\begin{equation}\label{finite-part}
{\rm fp} \int_{\rho>\e} \omega^{n+1-m}\wedge\Phi(\Theta)
=\int_M \mathcal{F}_\Phi (R_{\a\ol\b\g\ol\mu}, A_{\a\b}, \nabla_\a)\,\theta\wedge(d\theta)^n,
\end{equation}
where $\mathcal{F}_\Phi$ is a linear combination of the complete contractions of polynomials in the Tanaka--Webster curvature, torsion and their covariant derivatives. Moreover, the integral is independent of the choice of $\th$ and gives a CR invariant of $M$.
\end{thm}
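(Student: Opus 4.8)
The plan is to treat \eqref{finite-part} as two separate assertions: that the finite part of the divergent integral is the boundary integral of a \emph{local} pseudo-hermitian invariant, and that this quantity does not depend on the pseudo-Einstein contact form. For the first, I would use that $\Phi(\Theta)$ extends continuously to $\ol\Omega$—it coincides with $\Phi(W)$ near $M$, and $W$ is continuous up to the boundary—so the only source of divergence in the integrand $\omega^{n+1-m}\wedge\Phi(\Theta)$ is the factor $\omega^{n+1-m}=(-i\pa\ol\pa\log\r)^{n+1-m}$, which blows up like a power of $\r^{-1}$ at $M$. Inserting the Lee--Melrose (Fefferman) expansion of $\r$, I would write each geometric quantity in a collar of $M$ as a smooth function of $\r$ and the boundary variables whose coefficients are universal polynomials in the Tanaka--Webster curvature, torsion and their covariant derivatives. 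Slicing $\{\r>\e\}$ by the level sets $\{\r=t\}$ via the coarea formula then yields an expansion $\int_{\r>\e}\omega^{n+1-m}\wedge\Phi(\Theta)=\sum_{j\ge1}c_j\,\e^{-j}+L\log(1/\e)+\mathcal{E}+o(1)$, in which locality forces every coefficient, and in particular the finite part $\mathcal{E}$, to be an integral over $M$ of a complete contraction of curvature and torsion; reading off $\mathcal{E}$ gives the right-hand side of \eqref{finite-part} with the stated form of $\mathcal{F}_\Phi$.

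For the invariance, I would compare two pseudo-Einstein contact forms $\wh\th=e^\U\th$, where $\U$ is necessarily CR pluriharmonic if the pseudo-Einstein condition is to be preserved. The associated Fefferman defining functions satisfy $\log\wh\r=\log\r+\U'$ for a suitable extension $\U'$ of $\U$, so that $\wh\omega=\omega-i\pa\ol\pa\U'$, while $\wh\Theta$ differs from $\Theta$ only through the $\r$-dependent correction in \eqref{theta-intro}. Working in the family $\th_t=e^{t\U}\th$ and using that both $\omega$ and the characteristic form $\Phi(\Theta)$ are closed, I would write the $t$-derivative of $\omega_t^{n+1-m}\wedge\Phi(\Theta_t)$ as an exact form $d\,\Xi_t$, combining the standard transgression form for $\Phi$ with a primitive of $\delta\omega=-i\pa\ol\pa\dot\U'$. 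Together with the boundary term coming from the moving domain $\{\r_t>\e\}$, Stokes' theorem reduces $\tfrac{d}{dt}\,\mathrm{fp}\int_{\r_t>\e}\omega_t^{n+1-m}\wedge\Phi(\Theta_t)$ to the finite part of an integral over the slice $\{\r_t=\e\}$, which by the locality established above is once more an integral over $M$ of a local expression linear in $\U$.

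The main obstacle is to show that this residual boundary functional vanishes after integration over $M$. I expect it to take the shape $\int_M \U\cdot\mathcal{D}[\th]\,\th\wedge(d\th)^n$ for a local invariant $\mathcal{D}$ built from the Tanaka--Webster data, and the crux is to prove that pairing $\mathcal{D}$ with a CR pluriharmonic $\U$ gives zero. Concretely, I would rewrite the finite part of the transgression as a genuine pseudo-hermitian divergence by exploiting the pseudo-Einstein condition, so that the pairing becomes $\int_M \U\cdot P(\cdots)$ with $P$ a formally self-adjoint operator annihilating CR pluriharmonic functions; this is precisely the mechanism underlying the CR invariance of the total $Q'$-curvature and of the Case--Gover $\mathcal{I}'$-curvature. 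Granting this, $\mathrm{fp}\int_{\r>\e}\omega^{n+1-m}\wedge\Phi(\Theta)$ is independent of $\th$, and since pseudo-Einstein contact forms exist under the standing hypothesis and any two are joined by such a rescaling, it defines a CR invariant of $M$.
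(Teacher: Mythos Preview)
Your outline of the first assertion---that the finite part is a boundary integral of a local pseudo-hermitian invariant---is workable, though less direct than the paper's. The paper avoids the coarea machinery entirely: since $\omega=d(\vartheta/\rho)$ and $\Phi(\Theta)$ is closed, Stokes gives
\[
\int_{\rho>\e}\omega^{n+1-m}\wedge\Phi(\Theta)=\e^{-(n+1-m)}\int_{\rho=\e}\vartheta\wedge(d\vartheta)^{n-m}\wedge\Phi(\Theta),
\]
so the finite part is exactly $(n+1-m)!^{-1}\int_M\mathcal{L}_N^{n+1-m}\bigl(\vartheta\wedge(d\vartheta)^{n-m}\wedge\Phi(\Theta)\bigr)$, which is then unwound via the Graham--Lee connection and Proposition~\ref{exp-GL}. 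Your coarea/Taylor approach should lead to the same answer, just with more bookkeeping.

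The invariance argument, however, has a genuine gap. You reduce the variation to a boundary functional and then \emph{hope} it has the form $\int_M \U\cdot P(\cdots)$ with $P$ formally self-adjoint and annihilating CR pluriharmonic functions. This is not established, and in fact is not how the invariance works here. The paper's mechanism is different and essential: one first converts the finite part into the \emph{logarithmic} part of another integral,
\[
{\rm fp}\int_{\rho>\e}\omega^{n+1-m}\wedge\Phi(\Theta)=-{\rm lp}\int_{\rho>\e}i\,\pa\log\rho\wedge\ol\pa\log\rho\wedge\omega^{n-m}\wedge\Phi(W),
\]
by integrating by parts against $\log\rho$. Log parts, unlike finite parts, are insensitive to the choice of cutoff function, which neutralizes the moving-domain issue you gloss over. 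Comparing two Fefferman defining functions $\wh\rho=e^{\U}\rho$ then produces cross terms of the form $\int_{\rho=\e}\ol\pa\U\wedge\omega^{n-m}\wedge\Phi(W)$, and the point is that this integrand is \emph{exact} near $M$: for $m<n$ because $\omega$ itself is exact there, and for $m=n$ because $\Phi(W)$ is exact by Takeuchi's vanishing theorem $\Phi(T^{1,0}M)=0$ in $H^{2n}(M;\R)$. No operator-theoretic cancellation on CR pluriharmonics is invoked; the vanishing is cohomological. Your proposed route would need either this fp--lp trick or an independent proof that the residual boundary functional is a divergence---and for general $0<m<n$ the paper constructs no $\mathcal{I}'$-type curvature with such a transformation law, so there is no known ``$P$-operator'' to appeal to.
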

A usual characteristic form for $g$ can be written by a combination of powers of $\omega$ and renormalized characteristic forms; see \S\ref{ren-chara}. Hence Theorem \ref{integral-inv} implies that the finite part
\[
{\rm fp} \int_{\rho>\e} \omega^{n+1-m}\wedge\Phi(\Psi)
\]
also gives a CR invariant of $M$. Moreover, it follows from \eqref{CGB} and \eqref{chern-exp} that the finite part of the usual Gauss--Bonnet integral
\[
{\rm fp}\int_{\r>\e} c_{n+1}(\Psi)
\]
is the sum of $\chi(\Omega)$ and a CR invariant of $M$.

When $\Phi=1\ (m=0)$, the integrand of the left-hand side of \eqref{finite-part} is equal to the volume form of $g$ up to a compactly supported form determined by the difference between $\omega$ and the K\"ahler form of $g$. The volume renormalization of the Cheng--Yau metric with respect to a Fefferman defining function is considered in \cite{HMM}, and it is shown that 
\[
{\rm fp}\int_{\r>\e}\omega^{n+1}=k_n\int_M Q',
\]
where $Q'$ is the $Q'$-curvature, introduced by Case--Yang \cite{CY} for $n=1$ and generalized by Hirachi \cite{H} to higher dimensions. Therefore, our global CR invariants in Theorem \ref{integral-inv} can be considered as generalizations of the total $Q'$-curvature.

The $Q'$-curvature is a pseudo-hermitian invariant defined for each pseudo-Einstein contact form, and the CR invariance of its integral follows from the transformation formula under rescaling, which is described in terms of the CR GJMS operator and the $P'$-operator. Thus, it is natural to expect that the CR invariant given by Theorem \ref{integral-inv} is also the integral of a ``$Q'$-like'' curvature on $M$ whose transformation formula explains the CR invariance. In this paper, we construct such a curvature quantity $\mathcal{I}'_{\Phi}$ for each invariant polynomial $\Phi$ of degree $m=n$, which gives a generalization of the $\mathcal{I}'$-curvature of Case--Gover \cite{CG}.

In \cite{CG}, they consider a local CR invariant $\mathcal{I}\in\calE(-3, -3)$ which is an analogue of the Fefferman--Graham invariant in conformal geometry. When the CR manifold $M$ is 5-dimensional, it integrates to a global CR invariant, which is though equal to $0$. They introduced an alternative ``primed'' pseudo-hermitian invariant $\mathcal{I}'$ and showed that it integrates to a non-trivial CR invariant which equals the difference between the Burns--Epstein invariant and the total $Q'$-curvature. The transformation formula of $\mathcal{I}'$ is described by a CR invariant tensor $X_\a$ such that $(X_\a\th^\a+X_{\ol\a}\th^{\ol\a})\wedge\th\wedge d\th$ gives a representative of the second Chern class 
$c_2(T^{1, 0}M)\in H^4(M; \R)$ when $\th$ is pseudo-Einstein. It then follows from the vanishing of this cohomology class that the integral of $\mathcal{I}'$ is independent of the choice of pseudo-Einstein contact form. 

By using CR tractor calculus, we generalize $\mathcal{I}'$ to $\mathcal{I}'_\Phi$ on $(2n+1)$-dimensional CR manifolds for any invariant polynomial $\Phi$ of degree $n$ on $\mathfrak{gl}(n, \mathbb{C})$. This can be considered as a primed analogue of a local CR invariant $\mathcal{I}_\Phi\in\calE(-n-1, -n-1)$. For a rescaling $\wh\th=e^\U\th$ of a pseudo-Einstein contact form, it satisfies
\[
\wh {\mathcal{I}}'_\Phi=\mathcal{I}'_\Phi - X^\Phi_\a \U^\a-X^\Phi_{\ol\a}\U^{\ol\a}
\]
with a CR invariant tensor $X^{\Phi}_\a$ such that $n^2(X^{\Phi}_\a\th^\a+X^{\Phi}_{\ol\a}\th^{\ol\a})\wedge\th\wedge (d\th)^{n-1}$ is a representative of $\Phi(T^{1, 0}M)\in H^{2n}(M; \R)$. Using the vanishing of this characteristic class, due to Takeuchi \cite{T}, 
we can prove that the total $\mathcal{I}'_\Phi$-curvature gives a CR invariant. Then we prove that two global CR invariants associated with $\Phi$ of degree $n$ coincide up to a constant multiple (Theorem \ref{fp-I-prime}):
\[
{\rm fp}\int_{\rho>\e} \omega\wedge\Phi(\Theta)=-n\int_M \mathcal{I}'_\Phi.
\]
Here, we regard $\Phi$ as an Ad-invariant polynomial on both $\mathfrak{gl}(n+1, \mathbb{C})$ and $\mathfrak{gl}(n, \mathbb{C})$ by representing it as a polynomial in $T_p(A):={\rm tr}(iA)^p$.  
To prove this equality, we relate the renormalized characteristic form to the ambient metric and establish an explicit correspondence between the ambient metric and the CR tractor calculus in the Graham--Lee setting. 

As an example, we compute the total $\mathcal{I}'_\Phi$-curvatures for a circle bundle over a complete intersection $Y$ in the complex projective space. It is given as a polynomial in the degrees of the defining polynomials of $Y$, and it shows that the invariants are non-trivial and independent of each other if we consider $\Phi$ modulo the first Chern form $c_1$. As for the other CR invariants given by Theorem \ref{integral-inv}, Takeuchi \cite{T2} computed them for Sasakian $\eta$-Einstein CR manifolds. His computation assures that the CR invariants in Theorem \ref{integral-inv} are non-trivial if $\Phi$ is not $0$ modulo $c_1$; see Remark \ref{general-case}. 
 
After this work was completed, the author was informed by Jeffrey Case and Yuya Takeuchi that they had also constructed generalizations of the $\mathcal{I}'$-curvature by a different method; they first generalize the CR invariant tensor $X_\a$ and construct the corresponding $\mathcal{I}'$-type curvature. The curvatures thus obtained are exactly same as ours. The detail of the construction and the relation to Hirachi's conjecture will appear in their paper \cite{CT}.
\bigskip

This paper is organized as follows: In \S\ref{CR-geometry}, we review some materials in CR geometry such as the Tanaka--Webster connection and the CR tractor calculus. In \S\ref{spc}, we define the Cheng--Yau metric and the ambient metric via Fefferman's approximate solution to the Monge--Amp\`ere equation, and recall the Graham--Lee connection, which plays an important role in relating the Cheng--Yau metric to CR geometry on the boundary. In \S\ref{renormalized-chara}, we introduce the renormalized connection for the Cheng--Yau metric and the renormalized characteristic forms by following Burns--Epstein. Its relation to the ambient metric and the Graham--Lee connection will be discussed. Then we construct the CR invariants associated with each invariant polynomial $\Phi$ and prove Theorem \ref{integral-inv}. We also deal with the case of the exact Cheng--Yau metric. The $\mathcal{I}'_\Phi$-curvatures are defined in \S\ref{I-prime-curv}. We clarify its relation to characteristic classes on the CR manifold and prove the CR invariance of the integral. Then, in \S\ref{relation}, we show that the total $\mathcal{I}'_\Phi$-curvature agrees with the CR invariant constructed in Theorem \ref{integral-inv}; the most part will be devoted to the preparation of the proof, where we establish the correspondence between the ambient construction and the CR tractor calculus in the Graham--Lee setting. Finally, we compute the total $\mathcal{I}'_\Phi$-curvatures for a circle bundle over a compact K\"ahler--Einstein manifold.
\bigskip

{\it Notations}: Throughout the paper, we denote the space of sections of a vector bundle by the same symbol as the bundle itself, and we adopt Einstein's summation convention. The symmetrization and the skew symmetrization  of a tensor are represented by $(, )$ and $[, ]$ respectively, and they are performed over barred indices and unbarred indices separately, e.g., 
\[
B_{[\a_1\ol\b_1\a_2\ol\b_2]}:=\frac{1}{2!2!}
(B_{\a_1\ol\b_1\a_2\ol\b_2}-B_{\a_2\ol\b_1\a_1\ol\b_2}-B_{\a_1\ol\b_2\a_2\ol\b_1}+B_{\a_2\ol\b_2\a_1\ol\b_1}).
\]
The trace-free part of $B_{\a\ol\b}$ is denoted by $B_{(\a\ol\b)_0}$. 

When a function $I(\e)$ admits an asymptotic expansion
\[
I(\e)=\sum_{m=0}^{k}\sum_{j=0}^{l_m}a_{m, j}\e^{-m}(\log\e)^j+o(1)
\]
as $\e\to0$ with some constants $a_{m, j}$, we define the logarithmic part and the finite part by 
\[
{\rm lp}\,I(\e):=a_{0, 1}, \quad {\rm fp}\,I(\e):=a_{0, 0}.
\]

\bigskip\noindent {\bf Acknowledgment} The author is grateful to Jih-Hsin Cheng and Kengo Hirachi for invaluable comments and discussions. He thanks Yuya Takeuchi for informing the author of his computation of the CR invariants for Sasakian $\eta$-Einstein manifolds. He also thanks Jeffrey Case and Yuya Takeuchi for the information of their independent construction of generalizations of the $\mathcal{I}'$-curvature and sharing their notes.

\section{CR geometry}\label{CR-geometry}
\subsection{CR structures}
Let $M$ be a $C^\infty$-manifold of dimension $2n+1\ge 3$. An {\it almost CR structure} on $M$ is a rank-$2n$ distribution $H\subset TM$ endowed with an almost complex structure $J\in End(H)$. An almost CR structure $(H, J)$ is called a {\it CR structure} when it satisfies the integrability condition $[T^{1, 0}M, T^{1, 0}M]\subset T^{1, 0}M$, where $\C H=T^{1, 0}M\oplus T^{0, 1}M$ is the eigenspace decomposition for $J$.  We assume that there exists a global real 1-form $\theta$ whose kernel is $H$, and define the {\it Levi form} by 
\[
h_\theta(X, Y):=d\theta(X, JY), \quad X, Y\in H.
\]  
The Levi form is a $J$-invariant symmetric form and we say the CR structure is {\it strictly pseudoconvex} if we can take $\theta$ for which $h_\theta$ is positive definite. Hereafter, we  assume that $(H, J)$ is a strictly pseudoconvex CR structure, and call $\theta$ a {\it contact form} since in this case $H$ is a contact distribution. A contact form is determined up to multiplications of positive functions. We define the orientation of $M$ so that $\theta\wedge (d\theta)^n>0$. 

The {\it CR canonical bundle} is the complex line bundle defined by 
\[
K_M:=\{\zeta\in \wedge^{n+1}\C T^* M\ |\ \ol Z\lrcorner\,\zeta=0\ \mbox{for all}\ Z\in T^{1, 0}M\}.
\]
We fix a complex line bundle $\calE(1, 0)$ which satisfies $\calE(1, 0)^{\otimes(-n-2)}=K_M$ and define the {\it CR density of weight} $(w, w')\ (w-w'\in \mathbb{Z})$ by   
\[
\calE(w, w'):=\calE(1, 0)^{\otimes w}\otimes \ol{\calE(1, 0)}{}^{\otimes w'}.
\]
As in the case of spin structures in Riemannian geometry, $\calE(1, 0)$ may not exist globally, but it always exists locally. When $M$ is a hypersurface in $\C^{n+1}$, $K_M$ is trivial and $\calE(1, 0)$ exists globally. We note that $\calE(w, w)$ can always be defined globally. Moreover, when $w\in \R$ it has a reduction to an $\R_+$-bundle. We call a positive section $0<\tau\in\calE(1, 1)$ a {\it CR scale}. If $\sigma\in\calE(1, 0)$ is a nonvanishing section, then $|\sigma|^2=\sigma\bar\sigma$ defines a CR scale. We also call $\sigma$ itself a CR scale.

We denote $T^{1, 0}M, T^{0, 1}M$ and their duals by $\calE^{\a}, \calE^{\ol\a}, \calE_{\a}, \calE_{\ol\a}$ respectively, and use similar notations for their tensor products, e.g., 
$\calE_\a\otimes\calE_{\ol\b}=\calE_{\a\ol\b}$. We also put $(w, w')$ to these bundles to represent the tensor product with $\calE(w, w')$, e.g., $\calE_\a\otimes\calE(w, w')=\calE_\a(w, w')$.

Let $\zeta$ be a nowhere vanishing section of $K_M$. A contact form $\theta$ is {\it volume normalized} by $\zeta$ if it satisfies
\[
\theta\wedge(d\theta)^n=i^{n^2}n! \theta\wedge(T\lrcorner\, \zeta)\wedge(T\lrcorner\, \ol\zeta),
\] 
where $T$ is the Reeb vector field: $\theta(T)=1, T\lrcorner\, d\theta=0$. Such a $\theta$ is unique, and the weighted 1-form
\[
\bth:=\theta\otimes|\zeta|^{-2/(n+2)}\in T^*M\otimes\calE(1, 1)
\]
is independent of choice of $\zeta$. We associate a CR scale $\tau\in\calE(1, 1)$ with the contact form $\theta=\tau^{-1}\bth$; this gives a one-to-one correspondence between choices of a CR scale and those of a contact form.
\subsection{The Tanaka--Webster connection} For a choice of contact form $\theta$, one can define a canonical linear connection on $TM$, called the {\it Tanaka--Webster connection}.

We take a local frame $\{Z_\a\}$ of $T^{1, 0}M$ and consider the local frame 
$\{T, Z_\a, Z_{\ol\a}:=\ol{Z_\a}\}$ of $\C TM$, which is called an {\it admissible frame}.
Let $\{\theta, \theta^\a, \theta^{\ol\a}\}$ be the dual frame. Then we have 
\begin{equation}\label{d-th}
d\theta=i h_{\a\ol\b}\theta^\a\wedge\theta^{\ol\b},
\end{equation}
where $h_{\a\ol\b}=h_\th(Z_\a, Z_{\ol\b})$ is the components of the Levi form. By using the CR scale $\tau\in\calE(1, 1)$ corresponding to $\th$, we define the weighted Levi form 
\[
\bh_{\a\ol\b}:=\tau h_{\a\ol\b}\in\calE_{\a\ol\b}(1, 1),
\]
which is invariant under rescaling of $\th$. We lower and raise the indices by $\bh_{\a\ol\b}$ and its inverse $\bh^{\a\ol\b}\in\calE^{\a\ol\b}(-1, -1)$. 

The Tanaka--Webster connection $\nabla$ is given by 
\[
\nabla T=0, \quad \nabla Z_\a=\omega_\a{}^\b \otimes Z_\b, \quad \nabla Z_{\ol\a}=\omega_{\ol\a}{}^{\ol\b} \otimes Z_{\ol\b},
\]
where $\omega_{\ol\a}{}^{\ol\b}:=\ol{\omega_\a{}^\b}$ and $\omega_\a{}^\b$ are characterized by the structure equations:
\begin{align*}
d\th^\b&=\th^\a\wedge\omega_\a{}^\b+A^\b{}_{\ol\g}\bth\wedge\th^{\ol\g}, \\
d h_{\a\ol\b}&=\omega_\a{}^\g h_{\g\ol\b}+h_{\a\ol\g}\omega_{\ol\b}{}^{\ol\g}.
\end{align*}
In particular, $\nabla$ preserves the Levi form: $\nabla h_{\a\ol\b}=0$. The tensor $A^\b{}_{\ol\g}\in\calE^\b{}_{\ol\g}(-1, -1)$ is called the {\it Tanaka--Webster torsion}. 
By differentiating \eqref{d-th}, one can see that $A_{\a\b}:=\ol{A_{\ol\a\ol\b}}\in\calE_{\a\b}$ is a symmetric tensor. 

The Tanaka--Webster connection also defines a connection on the CR density bundle $\mathcal{E}(w, w')$, whose connection form is given by 
\[
\frac{1}{n+2}(w\, \omega_\a{}^\a+w'\omega_{\ol\a}{}^{\ol\a}).
\]
Since $\nabla$ preserves $T^{1, 0}M$, covariant differentiations on tensor bundles such as 
$\calE_{\a\ol\b\g}(w, w')$ make sense and we denote the components of the covariant derivative as 
\[
\nabla_\mu f_{\a\ol\b\g}=f_{\a\ol\b\g, \mu}, \quad \nabla_{\ol\mu} f_{\a\ol\b\g}=f_{\a\ol\b\g, \ol\mu}, \quad \nabla_T f_{\a\ol\b\g}=f_{\a\ol\b\g, T}.
\]
When we differentiate a tensor in the $T$-direction, we lower the weight by $\tau^{-1}$ so that $\nabla_Tf_{\a\ol\b\g}\in\calE_{\a\ol\b\g}(w-1, w'-1)$. 

If we take $\{Z_\a\}$ such that $h_{\a\ol\b}=\d_{\a\ol\b}$, then $\th$ is volume normalized by 
\[
\zeta:=\th\wedge\th^1\wedge\cdots\wedge\th^n.
\]
By $\nabla\th=0$, $\nabla\zeta=-\omega_\a{}^\a\otimes\zeta$ and $\mbox{Re}\,\omega_\a{}^\a=0$, we have $\nabla\bth=0$, which implies 
\[
\nabla\tau=0, \quad  \nabla\bh_{\a\ol\b}=0.
\]
The curvature form $d\omega_\a{}^\b-\omega_\a{}^\g\wedge\omega_\g{}^\b$ is expressed as 
\[
R_\a{}^\b{}_{\g\overline\mu} \th^\g\wedge\th^{\overline \mu}+A_{\a\g,}{}^\b\th^\g\wedge\bth-A^\b{}_{\overline \g,\a}\th^{\overline \g}\wedge\bth 
  -iA_{\a\g}\th^\g\wedge\th^\b+i\bh_{\a\overline\g}A^\b{}_{\overline\mu}
\th^{\overline\g}\wedge\th^{\overline\mu}.
\]
The tensor $R_\a{}^\b{}_{\g\ol\mu}$ is called the {\it Tanaka--Webster curvature}. By taking traces, we obtain the {\it Tanaka--Webster Ricci tensor} and the {\it Tanaka--Webster scalar curvature}: 
\[
R_{\g\ol\mu}:=R_\a{}^\a{}_{\g\ol\mu}\in\calE_{\g\ol\mu}, \quad R:=R_\g{}^\g\in\calE(-1, -1).
\]
Under a rescaling of the contact form $\wh\th=e^\U\th$, these curvature quantities satisfy transformation formulas involving derivatives of the scaling factor $\U$; see \cite{Lee}.
\subsection{Pseudo-Einstein contact forms}
We will review the notion of pseudo-Einstein contact form. We refer the reader to 
\cite{Lee, H, HMM} for details.

A contact form $\theta$ is called {\it pseudo-Einstein} if it is locally volume normalized by a closed section $\zeta\in K_M$. In terms of the Tanaka--Webster connection, it is characterized by the equation
\[
R_{\a\ol\b}=\frac{1}{n}R\bh_{\a\ol\b}\ \,(n\ge 2), \quad \nabla_\a R=i A_{\a\b,}{}^\b\ \,(n=1).
\]
A CR scale $\sigma\in\calE(1, 0)$, or $\tau=|\sigma|^2\in\calE(1, 1)$, is called a {\it pseudo-Einstein CR scale} when the corresponding contact form $\tau^{-1}\bth$ is pseudo-Einstein. 

A complex function $f$ on $M$ is called a {\it CR function} if $\ol Zf=0$ for all $Z\in T^{1, 0}M$, and a real function $\U$ is called a {\it CR pluriharmonic function} if it is locally the real part of a CR function. It is known that when $\th$ is pseudo-Einstein, a contact form $ e^{\U}\th$ is also pseudo-Einstein if and only if $\U$ is CR pluriharmonic. The CR plurihamonicity has the following characterization. Let $\Delta_b=-(\nabla_\a\nabla^\a+\nabla^\a\nabla_\a)$ be the {\it sublaplacian}, and set
\[
\eta:=i(\U_\a\theta^\a-\U_{\ol\a}\theta^{\ol\a})-\frac{1}{n}(\Delta_b \U) \theta.
\]
\begin{prop}[{\cite[Lemma 3.2]{H}}]\label{cr-pluri}
$\U$ is CR pluriharmonic if and only if $d\eta=0$.
\end{prop}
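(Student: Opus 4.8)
The plan is to unwind the definition of CR pluriharmonicity: $\U$ is CR pluriharmonic means it is locally the real part of a CR function $f$, i.e.\ one with $f_{\ol\a}=0$. I will show that this is exactly the condition that the real $1$-form $\eta$ be locally exact, which for a closed form follows from the Poincar\'e lemma. The pivot of the whole argument is that the $(1,0)$-part of $\eta$ is $i\U_\a\th^\a$, so $\eta$ already records the Levi-directional derivatives of $\U$. I first note that $\eta$ is real, since $\Delta_b\U$ is real, so any local primitive may be chosen real.

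For the implication $d\eta=0\Rightarrow$ CR pluriharmonic, I apply the Poincar\'e lemma on a contractible neighborhood to write $\eta=dw$ with $w$ real, and compare the $\th^\a$-components of $\eta$ and of $dw=w_\a\th^\a+w_{\ol\a}\th^{\ol\a}+w_T\th$ to read off $w_\a=i\U_\a$, whence $w_{\ol\a}=\ol{w_\a}=-i\U_{\ol\a}$. The function $f:=\U-iw$ then has real part $\U$ and satisfies $f_{\ol\a}=\U_{\ol\a}-iw_{\ol\a}=0$, so it is CR; thus $\U$ is CR pluriharmonic. Notably, only the contact part of $\eta=dw$ is used here.

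For the converse, write $f=\U+iv$ with $v=\mathrm{Im}\,f$; from $f_{\ol\a}=0$ one gets $\U_\a=iv_\a$, and substituting into $\eta$ collapses its contact part, leaving $\eta=-dv+\bigl(v_T-\tfrac1n\Delta_b\U\bigr)\th$. It then remains only to check the scalar identity $v_T=\tfrac1n\Delta_b\U$, after which $\eta=-dv$ is (locally) exact and hence $d\eta=0$ on all of $M$.

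This last identity is the computational heart and the only genuine obstacle. I would obtain it by differentiating the CR equation once more and tracing. Since $f_{\ol\b}=0$, the commutation relation $f_{\a\ol\b}-f_{\ol\b\a}=i\bh_{\a\ol\b}f_0$ (with $f_0=\nabla_Tf$) gives $f_{\a\ol\b}=i\bh_{\a\ol\b}f_0$, while its conjugate applied to the anti-CR function $\ol f$ gives $\ol f_{\ol\b\a}=-i\bh_{\a\ol\b}\,\ol{f_0}$. Writing $\U=\tfrac12(f+\ol f)$ and taking the Levi trace in $\Delta_b\U=-\bh^{\a\ol\b}(\U_{\a\ol\b}+\U_{\ol\b\a})$ then yields $\Delta_b\U=n\,\mathrm{Im}(f_0)=n\,v_T$, as required. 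The only care needed is in the sign and weight conventions --- the $\tau^{-1}$-weight-lowering built into $\nabla_T$ and the normalization of $\Delta_b$ --- which must be tracked so that the coefficient of $\th$ in $\eta$ cancels precisely; the rest is formal.
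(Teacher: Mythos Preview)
Your argument is correct. The paper does not supply a proof of this proposition; it simply cites \cite[Lemma 3.2]{H}, so there is nothing in the paper to compare against directly. Your approach---producing a local primitive $w$ of $\eta$ via the Poincar\'e lemma and reading off $w_\a=i\U_\a$ to manufacture the CR function $f=\U-iw$, and conversely collapsing $\eta$ to $-dv$ using the identity $v_T=\tfrac1n\Delta_b\U$---is the standard one, and is essentially what Hirachi does. The computation of $v_T$ you sketch is consistent with the paper's own use of this identity later in \S\ref{ambient-tractor} (where, with the normalization $f=\tfrac12\U+iv$, one finds $v_T=\tfrac{1}{2n}\Delta_b\U$; your normalization $f=\U+iv$ doubles $v$ and gives exactly $v_T=\tfrac1n\Delta_b\U$). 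The caveat you flag about sign and weight conventions in the commutator $[\nabla_\a,\nabla_{\ol\b}]$ is the only delicate point, and you have it right.
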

\subsection{CR tractor bundles}
The CR tractor bundles are vector bundles associated to the CR Cartan bundle, and they are endowed with canonical linear connections induced by the Cartan connection. Here we define them in a direct way by following Gover--Graham \cite{GG}; see also \cite{CG}. The relation to the ambient metric construction will be discussed in \S\ref{ambient-tractor}.

If we fix a contact form $\th$, the {\it (standard) CR tractor bundle} $\calE^A$ is identified with the direct sum:
\[
\calE^A\overset{\th}{\cong}\begin{matrix}
\mathcal{E}(0,1)\\
\oplus \\
\mathcal{E}^\a (-1,0) \\
\oplus \\
\mathcal{E}(-1,0)
\end{matrix}.
\]
An element $\nu^A={}^t(\sigma, \nu^\a, \lambda)$ is called a {\it tractor}. The expression 
${}^t(\hat\sigma, \hat\nu^\a, \hat\lambda)$ for the same tractor in terms of another contact form $\wh\th=e^{\U}\th$ is related to the original one by
\[
\begin{pmatrix}
\hat\sigma \\
\hat\nu^\a \\
\hat\lambda
\end{pmatrix}
=
\begin{pmatrix}
\sigma \\
\nu^\a+\U^\a\sigma \\
\lambda-\U_\b \nu^\b-\frac{1}{2}(\U_\beta\U^\beta-i\U_T)\sigma
\end{pmatrix}.
\]
For a choice of $\th$, we define
\[
Z^A=
\begin{pmatrix} 0 \\ 0 \\ 1\end{pmatrix}\in\mathcal{E}^A(1,0), \quad 
W^A_\b=
\begin{pmatrix} 0 \\ \d_\b{}^\a \\ 0 \end{pmatrix}\in\mathcal{E}_\b^A(1,0),\quad 
Y^A=
\begin{pmatrix} 1 \\ 0 \\ 0 \end{pmatrix}\in\mathcal{E}^A(0,-1),
\]
where $\calE_\b^A$ denotes the tensor product $\calE_\b\otimes\calE^A$. Then any tractor is represented as $\sigma Y^A+\nu^\a W_\a^A+\lambda Z^A$. For a rescaling $\wh\th=e^\U\th$, we have
\begin{equation}\label{Z-W-Y}
\begin{gathered}
\widehat Z^A=Z^A, \quad \widehat W^A_\a=W^A_\a+\U_\a Z^A, \\
\widehat Y^A =Y^A-\U^\a W^A_\a-\frac{1}{2}(\U_\b\U^\b+i\U_T)Z^A.
\end{gathered}
\end{equation}

The dual bundle of $\calE^A$ is denoted by $\calE_A$; the expression as a direct sum and the transformation formula of components are given by
\[
\calE_A\overset{\th}{\cong} 
\begin{matrix}
\mathcal{E}(1,0)\\
\oplus \\
\mathcal{E}_\a (1,0) \\
\oplus \\
\mathcal{E}(0,-1)
\end{matrix}, \quad\quad 
\begin{pmatrix}
\hat\sigma \\
\hat\nu_\a \\
\hat\lambda
\end{pmatrix}
=
\begin{pmatrix}
\sigma \\
\nu_\a+\U_\a\sigma \\
\lambda-\U^\b \nu_\b-\frac{1}{2}(\U_\beta\U^\beta+i\U_T)\sigma
\end{pmatrix}.
\]
We denote by $\calE^{\ol A}, \calE_{\ol A}$ the complex conjugate bundle of $\calE^A, \calE_A$ and use the index notation for tensor products, e.g., $\calE_{A\ol B}=\calE_A\otimes\calE_{\ol B}$. These bundles are also called CR tractor bundles.

The {\it tractor metric} is the Lorentzian hermitian metric on $\calE^A$ defined by
\[
h_{A\ol B}\nu^A\ol{\nu'^B}:=\sigma\ol{\lambda'}+\lambda\ol{\sigma'}+\bh_{\a\ol\b}\nu^\a\ol{\nu'^\b},
\]
which is independent of choice of $\th$. We lower and raise tractor indices by $h_{A\ol B}\in\calE_{A\ol B}$ and its inverse $h^{A\ol B}\in\calE^{A\ol B}$.

There exists a CR invariant linear connection on each tractor bundle, called the {\it tractor connection}. In terms of the Tanaka--Webster connection, it is defined by
\begin{equation}\label{tractor-conn}
\begin{gathered}
\nabla_\b v^A=
\begin{pmatrix}
\nabla_\b \sigma \\
\nabla_\b\nu^\a+\d_\b{}^\a\lambda+P_{\b}{}^\a\sigma \\
\nabla_\b\lambda-iA_{\b\g}\nu^\g-T_\b\sigma
\end{pmatrix},
\quad
\nabla_{\ol\b} v^A=
\begin{pmatrix}
\nabla_{\ol\b}\sigma-\nu_{\ol\b}\\
\nabla_{\ol\b}\nu^\a-iA_{\ol\b}{}^\a\sigma \\
\nabla_{\ol\b}\lambda-P_{\g\ol\b}\nu^{\g}+T_{\ol\b}\sigma
\end{pmatrix}, \\
\nabla_T v^A=
\begin{pmatrix}
\nabla_T\sigma-\frac{i}{n+2}P\sigma+i\lambda \\
\nabla_T\nu^\a+iP_{\b}{}^\a\nu^\b-\frac{i}{n+2}P\nu^\a-2iT^\a\sigma \\
\nabla_T\lambda-\frac{i}{n+2}P\lambda-2iT_\a \nu^\a-iS\sigma
\end{pmatrix}.\qquad\qquad\qquad
\end{gathered}
\end{equation}
The Tanaka--Webster curvature quantities which appear in the formulas are defined as follows:
\begin{align*}
P_{\a\ol\b}&:=\frac{1}{n+2}\bigl(R_{\a\ol\b}-\frac{{R}}{2(n+1)}\bh_{\a\ol\b}\bigr), &
P&:=P_\a{}^\a=\frac{R}{2(n+1)}, \\
T_\a&=\ol{T_{\ol\a}}:=\frac{1}{n+2}(\nabla_\a P-i A_{\a\g,}{}^\g), &
S&:=-\frac{1}{n}(T_{\a,}{}^\a+T^\a{}_{,\a}+|P_{\a\ol\b}|^2-|A_{\a\b}|^2).
\end{align*}
The tractor connection preserves the tractor metric: 
$\nabla h_{A\ol B}=0$. The curvature form of the tractor connection is given by
\begin{align*}
\Omega_A{}^B&=
(S_\a{}^\b{}_{\g\ol\mu}\th^\g\wedge\th^{\ol\mu}+V_\g{}^\b{}_\a\th^\g\wedge\bth-
V^\b{}_{\a\ol\g}\th^{\ol\g}\wedge\bth)\,W_A^\a W_\b^B \\
&\quad +(iV_{\g\ol\mu\a}\th^\g\wedge\th^{\ol\mu}+Q_{\a\g}\th^\g\wedge\bth-iU_{\a\ol\g}\th^{\ol\g}\wedge\bth)\,W_A^\a Z^B \\
&\quad -(iV_{\ol\mu\g}{}^\b\th^\g\wedge\th^{\ol\mu}+iU_\g{}^\b\th^\g\wedge\bth+Q_{\ol\g}{}^\b\th^{\ol\g}\wedge\bth)\,Z_AW_\b^B \\
&\quad +(U_{\g\ol\mu}\th^\g\wedge\th^{\ol\mu}+Y_\g\th^\g\wedge\bth-Y_{\ol\g}\th^{\ol\g}\wedge\bth)\,Z_A Z^B,
\end{align*}
where 
\begin{align*}
S_{\a\ol\b\g\ol\mu}&:=R_{\a\ol\b\g\ol\mu}-P_{\a\ol\b}\bh_{\g\ol\mu}-P_{\g\ol\b}\bh_{\a\ol\mu}-P_{\g\ol\mu}\bh_{\a\ol\b}-P_{\a\ol\mu}\bh_{\g\ol\b}, \\
V_{\a\ol\b\g}&:=\nabla_{\ol\b}A_{\a\g}+i\nabla_\g P_{\a\ol\b}-iT_\g \bh_{\a\ol\b}-2iT_\a \bh_{\g\ol\b}, \\
Q_{\a\b}&:=i\nabla_T A_{\a\b}-2i\nabla_\b T_\a+2P_\a{}^\g A_{\g\b}, \\
U_{\a\ol\b}&:=\nabla_{\ol\b}T_\a+\nabla_\a T_{\ol\b}+P_\a{}^\g P_{\g\ol\b}-A_{\a\g}A^\g{}_{\ol\b}+S\bh_{\a\ol\b}, \\
Y_\a&:=\nabla_T T_\a-i\nabla_\a S+2i P_\a{}^\g T_\g-3A_{\a\g}T^\g,
\end{align*}
and $V_{\ol\a\b\ol\g}:=\ol{V_{\a\ol\b\g}},\ Q_{\ol{\a}\ol{\b}}:=\ol{Q_{\a\b}},\ Y_{\ol\a}:=\ol{Y_\a}$. The tensor $S_{\a\ol\b\g\ol\mu}$ is called the {\it Chen-Moser} tensor. It is a CR invariant tensor, and it vanishes identically if and only if $n=1$ or the CR manifold is locally isomorphic to the standard sphere.

For any weighted tractor $t\in\calE_{*}(w, w')$, we can define a CR invariant linear differential operator $t\mapsto D_A t\in \calE_{A*}(w-1, w')$, called the {\it tractor $D$-operator}.  In a scale $\th$, it is given by  
\[
D_A t :=
\begin{pmatrix}
w(n+w+w^\prime) t \\
(n+w+w^\prime)\nabla_\a t \\
-\nabla^\b\nabla_\b t-iw\nabla_T t-w\bigl(1+\frac{w^\prime-w}{n+2}\bigr)Pt
\end{pmatrix}.
\]
We also have its complex conjugate: $D_{\ol A}t:=\ol{D_{A}\ol t}\in\calE_{\ol A*}(w, w'-1)$. 

\section{Strictly pseudoconvex domains} \label{spc}
\subsection{The Monge--Amp\`ere equation}
Let $X$ be a complex manifold of complex dimension $n+1\ge 2$. A relatively compact domain $\Omega\subset X$ with the smooth boundary $M=\partial\Omega$ is called {\it strictly pseudoconvex} if the induced CR structure on $M$ is strictly pseudoconvex. Let $K_X$ be the canonical bundle of $X$. We define the {\it ambient space} of the CR manifold $M$ by the $\C^*$-bundle $\wt X:=K_X\setminus\{0\}$ over $X$. We also define $\mathcal{N}:=K_M\setminus\{0\}$, which is isomorphic to the restriction $\wt X|_M$.
A section of the complex line bundle 
\[
\wt\calE(w):=(K_X\otimes \ol{K_X})^{-w/(n+2)}
\]
is called a {\it density of weight} $w$. The restriction $\wt\calE(w)|_M$ is canonically isomorphic to the CR density $\calE(w, w)$. A density $f\in\wt\calE(w)$ can be identified with a function on $\wt X$ which satisfies the homogeneity condition $\delta_\lambda^* f=|\lambda|^{2w}f\ (\lambda\in\C^*)$ with respect to the dilation $\d_\lambda(u):=\lambda^{n+2}u$. We call a real density $\br\in\wt\calE(1)$ a {\it defining density} if the corresponding homogeneous function on $\wt X$ gives a defining function of $\mathcal{N}$ which is positive on the fibers over $\Omega$: $\mathcal{N}=\{\br=0\},\ d\br|_{\mathcal{N}}\neq0$.
By the {\it pseudoconvex side}, we mean the side on which $\br>0$.

If we take local holomorphic coordinates $(z^1, \dots, z^{n+1})$, we have the local density
\[
\wt\tau=|dz^1\wedge\cdots\wedge dz^{n+1}|^{-2/(n+2)}\in\wt\calE(1).
\]
Using this density, we define the {\it Monge--Amp\`ere operator} acting on a real density $\br\in\wt\calE(1)$ by
\[
\mathcal{J}[\br]:=(-1)^{n+1}\det
\begin{pmatrix}
\r & \partial_{z^i}\r \\
\partial_{\bar z^j}\r & \partial_{z^i}\partial_{\bar z^j} \r
\end{pmatrix},
\]
where $\rho:={\wt\tau}^{-1}\br\in\wt\calE(0)$. It can be verified that $\mathcal{J}$ is independent of the choice of coordinates $(z^i)$. Fefferman constructed an approximate solution to the Monge--Amp\`ere equation $\mathcal{J}[\br]=1$:
\begin{thm}[\cite{F}]
There exists a defining density $\br\in\wt\calE(1)$ which satisfies
\begin{equation}\label{MA}
\mathcal{J}[\br]=1+O(r^{n+2}),
\end{equation}
where $r\in C^{\infty}(X)$ is an arbitrary defining function of $\Omega$. Moreover, such a density is unique modulo $O(r^{n+3})$.
\end{thm}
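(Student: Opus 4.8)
The plan is to construct $\br$ by solving \eqref{MA} formally order by order in $r$, the entire argument resting on a single indicial computation for the linearization of $\mathcal{J}$. Since $\mathcal{J}$ is independent of the choice of holomorphic coordinates, I would work locally near a boundary point with the coordinate representative $\rho=\wt\tau^{-1}\br$, writing $\mathcal{J}[\br]=(-1)^{n+1}\det M$ with $M=\begin{pmatrix}\rho & \rho_i\\ \rho_{\bar j}&\rho_{i\bar j}\end{pmatrix}$, where the subscripts denote $z$- and $\bar z$-derivatives; Taylor coefficients determined uniquely at each order then patch to a global density. For the initialization, strict pseudoconvexity guarantees that $M$ is nondegenerate along $M$ and that $\mathcal{J}[\br_0]|_{M}$ is a positive multiple of the Levi determinant, so after rescaling $\br_0$ by a suitable positive density we may assume $\mathcal{J}[\br_0]=1+O(r)$.

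The inductive step is the heart of the matter. Suppose $\br$ is a defining density with $\mathcal{J}[\br]=1+\eta\,r^{k}+O(r^{k+1})$ for some $1\le k\le n+1$. I would perturb multiplicatively, $\br'=\br(1+\phi\,r^{k})$, and expand the determinant as $\mathcal{J}[\br']=\mathcal{J}[\br]\bigl(1+\mathrm{tr}(M^{-1}\delta M)+\cdots\bigr)$, where $\delta M$ is the bordered complex Hessian of $\br'-\br$ and the dots are higher order in the perturbation. The key claim, to be extracted from an explicit evaluation of $\mathrm{tr}(M^{-1}\delta M)$ near the boundary, is that its leading contribution occurs exactly at order $r^{k}$, with coefficient $c_{k}\,\phi$ for a nonzero factor $c_{k}$ valid for every $1\le k\le n+1$. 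Granting this, one chooses $\phi=-\eta/c_{k}$ to cancel the order-$r^{k}$ error and improve the equation to $\mathcal{J}[\br]=1+O(r^{k+1})$; iterating from $k=1$ to $k=n+1$ yields $\mathcal{J}[\br]=1+O(r^{n+2})$, which proves existence.

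For uniqueness I would apply the same linearization to the difference of two solutions. If $\br$ and $\br'$ both satisfy \eqref{MA} and $\br'=\br(1+\chi)$ with $\chi=\chi_{m}r^{m}+O(r^{m+1})$ of leading order $r^{m}$, then $\mathcal{J}[\br']-\mathcal{J}[\br]=c_{m}\chi_{m}r^{m}+O(r^{m+1})$ must be $O(r^{n+2})$. Since $c_{m}\ne0$ for $m\le n+1$, the leading coefficient $\chi_{m}$ is forced to vanish in that range, so $m\ge n+2$, i.e.\ $\chi=O(r^{n+2})$. Because $\br=O(r)$, this gives $\br'-\br=\br\chi=O(r^{n+3})$, which is precisely the asserted uniqueness modulo $O(r^{n+3})$.

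The crux — and the source of the exact orders in the statement — is the indicial computation behind the constants $c_{k}$. Carrying it out shows that the linearization of $\mathcal{J}$ at a near-solution has indicial polynomial proportional to $k(n+2-k)$, whose two roots are $k=0$ and $k=n+2$. The root $k=0$ reflects the overall boundary scaling already fixed in the initialization, while the root $k=n+2$ is the genuine obstruction: it makes $c_{n+2}=0$, so the construction stalls exactly at $O(r^{n+2})$, and it is the same vanishing that leaves the order-$r^{n+2}$ multiplicative correction undetermined, producing uniqueness only modulo $O(r^{n+3})$. The main technical obstacle is thus this one computation; the remaining points — coordinate independence of the patched density and the positivity used at initialization — follow routinely from the invariance of $\mathcal{J}$ and strict pseudoconvexity.
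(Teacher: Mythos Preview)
The paper does not prove this theorem: it is stated with a citation to Fefferman's original paper \cite{F} and is used as a known black box (the very next sentence simply fixes such a $\br$ and names it a \emph{Fefferman defining density}). So there is no proof in the paper to compare your proposal against.

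That said, your sketch is the standard Fefferman argument and is correct in outline. One small caution: in the multiplicative parametrization $\br'=\br(1+\phi\,r^{k})$ the precise indicial coefficient one extracts from $\mathrm{tr}(M^{-1}\delta M)$ depends on conventions (additive $\rho\mapsto\rho+a\rho^{m}$ versus multiplicative, and whether one tracks orders in $r$ or in $\rho$), so the exact polynomial you quote, $k(n+2-k)$, should be verified rather than asserted; what matters, and what you have right, is that the coefficient is nonzero for $1\le k\le n+1$ and vanishes at the next step, which is exactly what produces $O(r^{n+2})$ in \eqref{MA} and uniqueness modulo $O(r^{n+3})$.
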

We fix a density $\br\in\wt\calE(1)$ given in the theorem above and call it a {\it Fefferman defining density}. 
Let $\wt\tau\in\wt\calE(1)$ be a density which restricts to a CR scale $\tau=\wt\tau|_M\in\calE(1, 1)$. Then $\r :=\wt\tau^{-1}\br$ becomes a defining function of $\Omega$ near $M$, and the 1-form
\[
\frac{i}{2}(\pa\r-\ol\pa\r)|_{TM}
\]
coincides with the contact form $\th=\tau^{-1}\bth$; see {\cite[Proposition 5.2]{Fa}}. The pseudo-Einstein condition on $\th$ can be characterized in terms of pluriharmonic extension of $\log\tau$:
\begin{prop}[{\cite[Proposition 2.6]{HMM}}]\label{p-E}
A CR scale $\tau\in\calE(1, 1)$ is pseudo-Einstein if and only if it has an extension $\wt\tau\in\wt\calE(1)$ which satisfies $\pa\ol\pa\log\wt\tau=0$ on the pseudoconvex side of a neighborhood of $\mathcal{N}$. Such an extension is unique near $\mathcal{N}$ on the pseudoconvex side.
\end{prop}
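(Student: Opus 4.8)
The plan is to reduce both conditions to the CR pluriharmonicity of a single rescaling factor, using a holomorphic coordinate density as a base point. Near a point of $M$ fix holomorphic coordinates $(z^1,\dots,z^{n+1})$ and set $\zeta_0:=dz^1\wedge\cdots\wedge dz^{n+1}$, a nowhere vanishing holomorphic section of $K_X$, together with $\wt\tau_0:=|\zeta_0|^{-2/(n+2)}\in\wt\calE(1)$. In the local trivialization of $\wt\calE(1)$ determined by $\zeta_0$ this density is the constant $1$, so $\pa\ol\pa\log\wt\tau_0=0$ identically. Writing an arbitrary density as $\wt\tau=e^{\wt\U}\wt\tau_0$ and its restriction as $\tau=e^{\U}\tau_0$ with $\tau_0:=\wt\tau_0|_M$ and $\U:=\wt\U|_M$, the pluriharmonicity of $\log\wt\tau_0$ gives $\pa\ol\pa\log\wt\tau=\pa\ol\pa\wt\U$. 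Hence the extension condition $\pa\ol\pa\log\wt\tau=0$ on the pseudoconvex side is exactly the pluriharmonicity of $\wt\U$ there.

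I would first observe that the reference scale is pseudo-Einstein. Under the canonical identification $\wt\calE(1)|_M\cong\calE(1,1)$, the restriction $\zeta_0|_M\in K_M$ is nowhere vanishing, and it is closed because $\zeta_0$ is a closed (holomorphic top-degree) form and $i^*d=d\,i^*$ for the inclusion $i\colon M\hookrightarrow X$. Since $\tau_0=|\zeta_0|_M|^{-2/(n+2)}$, the contact form $\th_0=\tau_0^{-1}\bth$ is precisely the one volume normalized by $\zeta_0|_M$, so closedness of $\zeta_0|_M$ makes $\th_0$, hence $\tau_0$, pseudo-Einstein. Now $\tau=e^{\U}\tau_0$ corresponds to the contact form $e^{-\U}\th_0$, and by the recalled characterization a rescaling of a pseudo-Einstein contact form is again pseudo-Einstein if and only if the logarithmic factor is CR pluriharmonic; as CR pluriharmonicity is preserved under sign change, $\tau$ is pseudo-Einstein if and only if $\U$ is CR pluriharmonic.

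It remains to identify ``$\U$ CR pluriharmonic'' with ``$\wt\U$ pluriharmonic on the pseudoconvex side'', and this is the analytic heart of the argument as well as, I expect, the main obstacle. If $\U$ is CR pluriharmonic it is locally $\mathrm{Re}\,f$ for a CR function $f$ on $M$; the Lewy-type one-sided extension theorem for strictly pseudoconvex boundaries produces a holomorphic $F$ on the pseudoconvex side near $M$ with $F|_M=f$, and $\wt\U:=\mathrm{Re}\,F$ is then a pluriharmonic extension of $\U$. Conversely a pluriharmonic $\wt\U$, smooth up to $M$, is locally $\mathrm{Re}\,G$ with $G$ holomorphic, whose boundary value is CR, so $\U=\mathrm{Re}(G|_M)$ is CR pluriharmonic. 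Chaining the equivalences yields the stated ``if and only if''. The delicate points are the one-sidedness (extension into the pseudoconvex side precisely) and the regularity up to $M$ needed to restrict $G$ and to differentiate along $M$.

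Finally, for uniqueness I would argue by boundary rigidity and then glue. Two log-pluriharmonic extensions of the same $\tau$ satisfy $\wt\tau'/\wt\tau=e^{v}$ with $v=\mathrm{Re}\,G$ pluriharmonic near $\mathcal{N}$ on the pseudoconvex side and $v|_M=0$, so $G|_M$ is a purely imaginary CR function. Differentiating $G|_M$ along $T^{1,0}M$ and using Levi nondegeneracy of $d\th$ to force the Reeb derivative to vanish shows that $G|_M$ is an imaginary constant; boundary uniqueness of holomorphic functions on the pseudoconvex side (for instance via reflection across $M$) then forces $G$ to be constant and $v\equiv0$. Thus the extension is unique near $\mathcal{N}$, which in turn lets the local extensions constructed above be patched into a single density $\wt\tau$ when $\tau$ is globally defined.
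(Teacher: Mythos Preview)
The paper does not give its own proof of this proposition: it is quoted verbatim from \cite[Proposition 2.6]{HMM} and then used as a black box to deduce Proposition~\ref{pluriharmonic-ext}. So there is nothing in the paper to compare your argument against.

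That said, your outline is sound and is essentially the standard reduction: trivialize $\wt\calE(1)$ by a holomorphic volume form, so that the log-pluriharmonicity of an extension becomes ordinary pluriharmonicity of the rescaling factor $\wt\U$, and then use that the reference scale $\tau_0$ is pseudo-Einstein to reduce the boundary condition to CR pluriharmonicity of $\U$. The Lewy one-sided extension handles the forward implication cleanly.

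Two places deserve more care. First, in the converse direction and in the uniqueness argument you write a pluriharmonic $\wt\U$ (or $v$) as $\mathrm{Re}\,G$ with $G$ holomorphic and then restrict $G$ to $M$; you should justify that $G$ is smooth up to $M$. The clean way is to note that $\pa\wt\U$ is a closed $(1,0)$-form which is $C^\infty$ up to the boundary, so on a simply connected neighborhood in $\ol\Omega$ it admits a $C^\infty$ primitive $G$, and $\ol\pa G=0$ on the interior makes $G$ holomorphic there. Second, the phrase ``boundary uniqueness \dots\ via reflection across $M$'' is not quite right: there is no reflection available. Once you have shown $G|_M$ is an imaginary constant (your Levi-nondegeneracy argument for this is correct), subtract that constant and observe that the resulting holomorphic function vanishes on $M$; iterating the CR equations shows it vanishes to infinite order on $M$, and real-analyticity of holomorphic functions then forces it to vanish near $M$ on the pseudoconvex side. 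With these two points filled in, your proof is complete.
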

By this proposition, we also have
\begin{prop}\label{pluriharmonic-ext}
$\U\in C^\infty(M)$ is CR pluriharmonic if and only if it has an extension $\wt\U\in C^\infty(\ol\Omega)$ such that $\pa\ol\pa\wt\U=0$ near $M$.
\end{prop}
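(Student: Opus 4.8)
The plan is to deduce the statement directly from Proposition \ref{p-E}, using the dictionary between CR pluriharmonic functions and rescalings of pseudo-Einstein CR scales. Recall first that pseudo-Einstein contact forms exist locally: in local holomorphic coordinates on $X$ the restriction of $dz^1\wedge\cdots\wedge dz^{n+1}$ is a closed section of $K_M$, and volume normalizing by it produces a local pseudo-Einstein scale. Fixing such a local pseudo-Einstein scale $\tau\in\calE(1,1)$, the stated fact that $e^{\U}\th$ is pseudo-Einstein precisely when $\U$ is CR pluriharmonic translates, through the correspondence $\tau\leftrightarrow\th=\tau^{-1}\bth$ (so that $e^{\U}\tau$ corresponds to the contact form $e^{-\U}\th$), into the assertion that $e^{\U}\tau$ is a pseudo-Einstein CR scale if and only if $\U$ is CR pluriharmonic. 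Proposition \ref{p-E} then converts both ``pseudo-Einstein'' conditions into the existence of $\pa\ol\pa\log$-flat extensions of densities, so that the whole argument becomes bookkeeping of such extensions.

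For the reverse implication, suppose $\U$ admits an extension $\wt\U\in C^\infty(\ol\Omega)$ with $\pa\ol\pa\wt\U=0$ near $M$. Working near a point of $M$, choose a local pseudo-Einstein scale $\tau$; by Proposition \ref{p-E} it has an extension $\wt\tau\in\wt\calE(1)$ with $\pa\ol\pa\log\wt\tau=0$ on the pseudoconvex side. Then $e^{\wt\U}\wt\tau$ is an extension of $e^{\U}\tau$ satisfying $\pa\ol\pa\log(e^{\wt\U}\wt\tau)=\pa\ol\pa\wt\U+\pa\ol\pa\log\wt\tau=0$ there, so the ``if'' part of Proposition \ref{p-E} shows $e^{\U}\tau$ is pseudo-Einstein; hence $\U$ is CR pluriharmonic near that point, and therefore everywhere, since CR pluriharmonicity is a local condition.

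For the forward implication, assume $\U$ is CR pluriharmonic. Near each point of $M$ pick a local pseudo-Einstein scale $\tau$; then $e^{\U}\tau$ is again pseudo-Einstein, so by Proposition \ref{p-E} both $\tau$ and $e^{\U}\tau$ have extensions $\wt\tau,\ \wt{(e^{\U}\tau)}\in\wt\calE(1)$ annihilated by $\pa\ol\pa\log$. The quotient $\wt{(e^{\U}\tau)}/\wt\tau\in\wt\calE(0)$ is an honest positive function restricting to $e^{\U}$ on $M$, and $\wt\U:=\log\bigl(\wt{(e^{\U}\tau)}/\wt\tau\bigr)$ is a real, local pluriharmonic extension of $\U$. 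Extending $\wt\U$ arbitrarily and smoothly into the interior then yields the desired $\wt\U\in C^\infty(\ol\Omega)$, provided the locally defined pieces agree.

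The main obstacle is exactly this last point: the function $\wt\U$ above is built from an auxiliary local pseudo-Einstein scale, and one must check it is independent of that choice so that the local extensions glue to a single function on a full one-sided neighborhood of $M$. This is where the uniqueness clause of Proposition \ref{p-E} is essential. If $\wt\U_1,\wt\U_2$ are two pluriharmonic extensions of $\U$ on the pseudoconvex side, then for a fixed local $\wt\tau$ both $e^{\wt\U_1}\wt\tau$ and $e^{\wt\U_2}\wt\tau$ are extensions of $e^{\U}\tau$ with $\pa\ol\pa\log=0$, so the uniqueness in Proposition \ref{p-E} forces them to coincide, whence $\wt\U_1=\wt\U_2$ near $M$. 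Thus the locally defined extensions agree on overlaps and patch to a global pluriharmonic extension near $M$, completing the forward direction. I expect the only remaining routine points to be the reality of $\wt\U$ and the careful tracking of weights, so that the quotient of the two weight-one densities is indeed a genuine function on $X$ near $M$.
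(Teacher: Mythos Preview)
Your proof is correct and follows exactly the approach the paper intends: the paper merely states that Proposition~\ref{pluriharmonic-ext} follows from Proposition~\ref{p-E}, and you have spelled out the details of that deduction, including the use of the uniqueness clause to glue the local pluriharmonic extensions. The residual points you flag (reality of $\wt\U$, the weight bookkeeping so that the quotient lies in $\wt\calE(0)$) are indeed routine, since both $\wt\tau$ and $\wt{(e^\U\tau)}$ are real positive sections of the real line bundle $\wt\calE(1)$.
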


\subsection{The ambient metric and the Cheng--Yau metric}\label{ambient-CY}
Let $\br\in\wt\calE(1)$ be a Fefferman defining density. We define the {\it ambient metric} on $\wt X$ near $\mathcal{N}$ by the Lorentz--K\"ahler metric $\wt g$ with the K\"ahler form $-i\pa\ol\pa\br$. By the Monge--Amp\`ere equation \eqref{MA}, the Ricci form of $\wt g$ satisfies
\begin{equation}\label{ricci-flat}
\Ric (\wt g)=\pa\ol\pa \,O(r^{n+2}).
\end{equation}

The $(1, 1)$-form $-i\pa\ol\pa\log\br$ descends to a K\"ahler form on $\Omega$ near $M$. We extend this to a hermitian metric $g$ on $\Omega$ and call it the {\it Cheng--Yau metric}. By \eqref{MA}, the Cheng--Yau metric satisfies the following approximate Einstein equation:
\begin{equation*}
\Ric(g)+(n+2)g=\pa\ol\pa\, O(r^{n+2}).
\end{equation*}
When $X=\C^{n+1}$, we can use the exact solution to the Monge--Amp\`ere equation constructed by Cheng--Yau \cite{ChY}. In this case, the metric satisfies the Einstein equation exactly. We will deal with the exact Cheng--Yau metric in \S \ref{exact}.

Suppose that $\th$ is a pseudo-Einstein contact form and let $\tau\in\calE(1, 1)$ be the corresponding CR scale. Then, by Proposition \ref{p-E}, we have an extension $\wt\tau\in\wt\calE(1)$ with $\pa\ol\pa\log\wt\tau=0$ near $\mathcal{N}$. Hence we obtain a defining function $\r:=\wt\tau^{-1}\br$ such that 
\[
\frac{i}{2}(\pa\r-\ol\pa\r)|_{TM}=\th, \quad g=-i\pa\ol\pa\log\r\ \ {\rm near}\ M,
\]
which is unique near $M$. We call $\r$ a {\it defining function associated with the pseudo-Einstein contact form} $\th$. Conversely, if $\r$ is a defining function such that $g=-i\pa\ol\pa\log\r$ near $M$, then the contact form $\th:=(i/2)(\pa\r-\ol\pa\r)|_{TM}$ is pseudo-Einstein. Thus,  there is a one to one correspondence between pseudo-Einstein contact forms on $M$ and the germs of defining functions along $M$ whose logarithms give K\"ahler potentials of the Cheng--Yau metric.  
\subsection{The Graham--Lee connection} 
Let $\r\in C^\infty(X)$ be an arbitrary defining function of $\Omega$ which is positive in $\Omega$. By following \cite{GL}, we introduce a linear connection on a neighborhood of $M$ in $X$ which is a natural extension of the Tanaka--Webster connection for $\th:=(i/2)(\pa\r-\ol\pa\r)|_{TM}$; see also \cite{HMM}. 

By strict pseudoconvexity, there exists a unique $(1, 0)$-vector field $\xi$ on $X$ near $M$ which satisfies
\[
\pa\r(\xi)=1, \quad -\xi\lrcorner\, \pa\ol\pa\r=\kappa\ol\pa\r
\]
for some real function $\kappa$. We call $\kappa$ the {\it transverse curvature}. Let $\{Z_\a\}$ be a local frame for ${\rm Ker}\,\pa\r\subset T^{1, 0}X$. Then, $\{Z_\a, Z_\infty:=\xi\}$ gives a local frame for $T^{1, 0}X$ called a {\it Graham--Lee frame}. Note that $\{Z_\a\}$ is a local frame for $T^{1, 0}M$ at the boundary. We write $\{\th^\a, \th^\infty:=\pa\r\}$ for the dual $(1, 0)$-coframe. We set $Z_{\ol\a}:=\ol{Z_\a}, \th^{\ol\a}:=\ol{\th^\a}$. Then
\[
-\pa\ol\pa\rho=h_{\a\ol\b}\theta^\a\wedge\theta^{\ol\b}+\kappa\pa\rho\wedge\ol\pa\rho,
\]
where $h_{\a\ol\b}:=-\pa\ol\pa\r(Z_\a, Z_{\ol\b})$ is the Levi form on each level set of $\r$ for the contact form given by the restriction of $\vartheta:=(i/2)(\pa\r-\ol\pa\r)$. We lower and raise indices by $h_{\a\ol\b}$ and its inverse $h^{\a\ol\b}$. When $\r$ is a Fefferman defining function, the Cheng--Yau metric is expressed near $M$ by
\[
-\pa\ol\pa\log\r=\frac{h_{\a\ol\b}}{\rho}\theta^\a\wedge\theta^{\ol\b}+\frac{1+\kappa\rho}{\rho^2}\pa\rho\wedge\ol\pa\rho.
\]
If we write
\[
\xi=N+\frac{i}{2}T
\]
with real vector fields $N, T$, then we have
\[
d\r(N)=\vartheta(T)=1, \quad d\r(T)=\vartheta(N)=0.
\]
Since $T\lrcorner\, d\vartheta=\kappa d\r$, $T$ is the Reeb vector field for $\vartheta$ on each level set of $\r$.

\begin{prop}[{\cite[Proposition 1.1]{GL}}]
There exists a unique linear connection $\nabla$ on $TX$ near $M$ which satisfies the following conditions:
\begin{itemize}
\item [(i)] $\nabla$ preserves ${\rm Ker}\, \pa\r$ and $\xi$ is parallel: $\nabla \xi=0$.
\item [(ii)] The connection 1-forms $\varphi_\a{}^\b$ defined by $\nabla Z_\a=\varphi_\a{}^\b\otimes Z_\b$ satisfies the structure equations
\begin{align*}
d\th^\a&=\th^\b\wedge\varphi_\b{}^\a+iA^\a{}_{\ol\b}\pa\r\wedge\th^{\ol\b}-\kappa^\a \pa\r\wedge\ol{\pa}\r-\frac{1}{2}\kappa d\r\wedge\th^\a, \\
d h_{\a\ol\b}&=\varphi_\a{}^\g h_{\g\ol\b}+h_{\a\ol\g}\,\ol{\varphi_\b{}^\g}.
\end{align*}
\end{itemize}
\end{prop}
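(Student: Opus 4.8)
The plan is to establish existence and uniqueness of the Graham--Lee connection by reducing the structure equations to a determined linear-algebraic system for the connection 1-forms, exactly as one does for the Tanaka--Webster connection. First I would fix a Graham--Lee frame $\{Z_\a, \xi\}$ and its dual coframe $\{\th^\a, \pa\r\}$, and write the most general connection compatible with condition (i): since $\nabla$ must preserve $\mathrm{Ker}\,\pa\r$ and kill $\xi$, its connection matrix in this frame is block-diagonal with the $\xi$-row and $\xi$-column vanishing, so the only unknowns are the 1-forms $\varphi_\a{}^\b$ (together with their conjugates $\varphi_{\ol\a}{}^{\ol\b}=\ol{\varphi_\a{}^\b}$, which are determined once the metric-compatibility with $h_{\a\ol\b}$ is imposed). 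Expanding each $\varphi_\a{}^\b$ in the full coframe $\{\pa\r, \ol\pa\r, \th^\g, \th^{\ol\g}\}$ introduces finitely many coefficient functions.

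Next I would substitute this ansatz into the first structure equation $d\th^\a=\th^\b\wedge\varphi_\b{}^\a+iA^\a{}_{\ol\b}\pa\r\wedge\th^{\ol\b}-\kappa^\a\pa\r\wedge\ol\pa\r-\frac12\kappa\,d\r\wedge\th^\a$ and compute $d\th^\a$ intrinsically from $\th^\a=$ the dual coframe, reading off the exterior derivative via the already-known quantities $\kappa$, $\kappa^\a$, and the structure functions of the frame. Matching the coefficients of the independent 2-forms $\pa\r\wedge\th^\g$, $\ol\pa\r\wedge\th^{\ol\g}$, $\pa\r\wedge\ol\pa\r$, $\th^\g\wedge\th^{\ol\mu}$, $\th^\g\wedge\th^\mu$, etc., yields a system of equations for the coefficients of $\varphi_\a{}^\b$. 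The torsion terms $A^\a{}_{\ol\b}$ and the prescribed $\kappa$-terms are designed precisely to absorb the components of $d\th^\a$ that cannot be produced by a metric connection; this is the mechanism that makes the system solvable. The second structure equation $dh_{\a\ol\b}=\varphi_\a{}^\g h_{\g\ol\b}+h_{\a\ol\g}\,\ol{\varphi_\b{}^\g}$ is the metric-compatibility condition, which pins down the remaining freedom: it forces the ``Hermitian'' part of $\varphi_\a{}^\b$, while the first structure equation fixes the ``non-Hermitian'' part, so together they determine all coefficients uniquely.

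The key point, mirroring Webster's original argument, is that the number of equations produced by matching 2-form coefficients in (ii) matches exactly the number of unknown coefficients, and the linear system that results is triangular in a suitable ordering (the $h_{\a\ol\b}$-compatibility decouples the symmetric/Hermitian block, after which the $\th^\b\wedge\varphi_\b{}^\a$ terms determine the rest). I would verify that the terms $iA^\a{}_{\ol\b}\pa\r\wedge\th^{\ol\b}$, $-\kappa^\a\pa\r\wedge\ol\pa\r$, and $-\frac12\kappa\,d\r\wedge\th^\a$ are exactly what is needed to make the system consistent, and that $A^\a{}_{\ol\b}$ is thereby determined (rather than free), so that no additional choices remain.

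The main obstacle I anticipate is bookkeeping rather than conceptual: correctly isolating the components of $d\th^\a$ along the mixed directions involving $\pa\r$ and $\ol\pa\r$, and checking that the prescribed torsion and transverse-curvature terms account for precisely those components that an honest metric connection on $\mathrm{Ker}\,\pa\r$ cannot absorb. In particular one must confirm the consistency condition coming from differentiating $-\pa\ol\pa\r=h_{\a\ol\b}\th^\a\wedge\th^{\ol\b}+\kappa\,\pa\r\wedge\ol\pa\r$, which ties $dh_{\a\ol\b}$, $d\kappa$, and the defining relations of $\xi$ together; this is where the strict pseudoconvexity (invertibility of $h_{\a\ol\b}$) and the specific normalization $-\xi\lrcorner\,\pa\ol\pa\r=\kappa\ol\pa\r$ enter to guarantee both solvability and uniqueness. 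Since the full computation is carried out in \cite{GL}, I would present the reduction to the determined system and the triangular-solvability observation, and refer to that reference for the remaining verification.
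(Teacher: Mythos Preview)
The paper does not give its own proof of this proposition: it is quoted verbatim as \cite[Proposition 1.1]{GL} and immediately used, with no argument supplied. So there is nothing in the paper to compare your proposal against.

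That said, your outline is the standard and correct strategy, and it is essentially what Graham--Lee carry out in the cited reference: write $\varphi_\a{}^\b$ in the coframe $\{\th^\g,\th^{\ol\g},\pa\r,\ol\pa\r\}$, impose the second equation (metric compatibility with $h_{\a\ol\b}$) to fix the Hermitian part, and then match coefficients in the first structure equation to determine the remaining components together with the torsion $A^\a{}_{\ol\b}$. Your identification of the role of the prescribed torsion and $\kappa$-terms --- absorbing exactly those pieces of $d\th^\a$ that a connection on $\mathrm{Ker}\,\pa\r$ alone cannot produce --- is the right mechanism, and your plan to invoke $d(-\pa\ol\pa\r)=0$ for consistency is also on target. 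Since the paper treats this as a black box from \cite{GL}, your sketch plus a pointer to that reference is entirely appropriate.
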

We call $\nabla$ the {\it Graham--Lee connection}. By the structure equations above and $\nabla T=0$, we can see that $\nabla$ restricts to the Tanaka--Webster connection on each level set of $\r$. The curvature form $\Phi_\a{}^\b=d\varphi_\a{}^\b-\varphi_\a{}^\g\wedge\varphi_\g{}^\b$ is given by
\begin{equation}\label{curv-GL}
\begin{aligned}
\Phi_\a{}^\b&=
R_\a{}^\b{}_{\g\ol\mu}\th^\g\wedge\th^{\ol\mu}-iA_{\a\g, }{}^\b\th^\g\wedge\ol{\pa}\r-iA^\b{}_{\ol\g, \a}\th^{\ol\g}\wedge\pa\r \\
&\quad -iA_{\a\g}\d_{\mu}{}^\b\th^\g\wedge\th^\mu+ih_{\a\ol\g}A^\b{}_{\ol\mu}\th^{\ol\g}\wedge\th^{\ol\mu} \\
&\quad -d\r\wedge\Bigl(\kappa_\a\th^\b-\kappa^\b h_{\a\ol\g}\th^{\ol\g}+\frac{1}{2}\d_\a{}^\b(\kappa_\g\th^\g-\kappa_{\ol\g}\th^{\ol\g})\Bigr) \\
&\quad -\frac{i}{2}(\kappa_\a{}^\b+\kappa^\b{}_\a+2A_{\a\g}A^{\b\g})d\r\wedge\vartheta.
\end{aligned}
\end{equation}

The Graham--Lee connection plays an important role in relating the ambient metric or the Cheng--Yau metric to the CR geometry of the boundary. When we express CR invariants of $M$ constructed by the Cheng--Yau metric in terms of the Tanaka--Webster connection, we need the Taylor expansions of curvature quantities of the Graham--Lee connection along $M$. 
We will derive some differential equations for this purpose. We fix an arbitrary point $p_0\in\Omega$ near $M$ and take a local frame $\{Z_\a\}$ of ${\rm Ker}\,\pa\r$ such that $\varphi_\a{}^\b(p_0)=0$. We differentiate \eqref{curv-GL} at $p_0$ and look at the coefficient of $d\r\wedge\th^\g\wedge\th^{\ol\mu}$. Then we have
\begin{equation}\label{Omega-N}
\begin{aligned}
\nabla_N R_{\a\ol\b\g\ol\mu}&=
\kappa R_{\a\ol\b\g\ol\mu}-\frac{i}{2}A_{\a\g, \ol\b\ol\mu}+\frac{i}{2}A_{\ol\b\ol\mu, \a\g}-A_{\a\g}A_{\ol\b\ol\mu} \\
&\quad +\frac{1}{2}A_{\a\nu}A^\nu{}_{\ol\mu}h_{\g\ol\b}+\frac{1}{2}A_{\ol\b}{}^\nu A_{\nu\g}h_{\a\ol\mu}+\kappa_{\a\ol\mu} h_{\g\ol\b}+\kappa_{\ol\b\g}h_{\a\ol\mu} \\
&\quad +\frac{1}{2}(\kappa_{\g\ol\mu}+\kappa_{\ol\mu\g})h_{\a\ol\b}+\frac{1}{2}(\kappa_{\a\ol\b}+\kappa_{\ol\b\a}+2A_{\a\nu}A^\nu{}_{\ol\b})h_{\g\ol\mu}.
\end{aligned}
\end{equation}  
Similarly, differentiating \eqref{curv-GL} and looking at the coefficient of $\vartheta\wedge d\r\wedge \theta^{\ol\g}$, we have
\begin{equation}\label{A-N}
\nabla_N A_{\a\b}=\kappa A_{\a\b}-i\kappa_{\a\b}-\frac{i}{2}A_{\a\b, T}.
\end{equation}
We need the Monge--Amp\`ere equation to derive an equation for $\kappa$, so we assume that $\r$ is a Fefferman defining function associated with some pseudo-Einstein contact form. Then, the transverse curvature satisfies
\begin{equation}\label{kappa-M}
\kappa|_M=\frac{2}{n}P
\end{equation}
and
\begin{equation}
\begin{aligned}\label{kappa-N}
& n N\kappa-\mu\r\,\ol\xi\xi\kappa+\frac{1}{2}(\Delta_b \kappa-2|A_{\a\b}|^2)-n\kappa^2
-2\mu\r\kappa^3-\mu^2\r^2\kappa^2\ol\xi\kappa \\
&\qquad-\mu^2\r\kappa^3 +\mu^2\r^2(\xi\kappa)(\ol\xi\kappa)+\mu^2\r\kappa(\xi\kappa)+4\mu\r\kappa (N\kappa)-\mu\r\kappa_\a\kappa^\a=O(\r^n),
\end{aligned}
\end{equation}
where we have set $\mu:=(1+\kappa\r)^{-1}$; see  {\cite[(4.9), (4.15)]{Mar}}.
We differentiate the equations \eqref{Omega-N}, \eqref{A-N}, and \eqref{kappa-N} in the $N$-direction repeatedly. The commutators of $\nabla_N$ and tangential differentiations are computed with the torsion and curvature tensors of the Graham--Lee connection, which are expressed by $h_{\a\ol\b}, R_{\a\ol\b\g\ol\mu}, A_{\a\b}, \kappa$ and their tangential derivatives. As a result, we obtain the expression of lower order Taylor coefficients in terms of the Tanaka--Webster connection:  
\begin{prop}\label{exp-GL}
Let $\nabla$ be the Graham--Lee connection for a Fefferman defining function. Then 
the boundary values of 
\begin{equation}\label{terms}
\nabla_N^{p}R_{\a\ol\b\g\ol\mu}, \quad \nabla_N^{p} A_{\a\b}, \quad N^{p-1} \kappa \quad (p\le n+1)
\end{equation}
are expressed in terms of the Tanaka--Webster curvature quantities on $M$.
\end{prop}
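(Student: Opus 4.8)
The plan is to argue by induction on $p$, using the three identities \eqref{Omega-N}, \eqref{A-N} and \eqref{kappa-N}, each of which solves for one additional $N$-derivative of a curvature quantity in terms of data of lower normal order. Write $\mathcal{K}_p$, $\mathcal{R}_p$, $\mathcal{A}_p$ for the assertions that the boundary values of $N^{p-1}\kappa$, $\nabla_N^p R_{\a\ol\b\g\ol\mu}$ and $\nabla_N^p A_{\a\b}$ have the desired Tanaka--Webster expressions. The base cases are immediate: since the Graham--Lee connection restricts to the Tanaka--Webster connection on each level set of $\r$, the boundary values $R_{\a\ol\b\g\ol\mu}|_M$ and $A_{\a\b}|_M$ are the Tanaka--Webster curvature and torsion, while \eqref{kappa-M} gives $\kappa|_M=\tfrac{2}{n}P$. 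The structural fact that makes the induction close is that the class of ``Tanaka--Webster curvature quantities''---complete contractions of $R_{\a\ol\b\g\ol\mu}$, $A_{\a\b}$, $\bh_{\a\ol\b}$ and their covariant derivatives (including $\nabla_T$)---is closed under tangential covariant differentiation, products and contractions. Thus it suffices to show that at each level $p$ the order-$p$ quantities in \eqref{terms} are expressible through quantities of order $\le p-1$ of these same three types together with their tangential derivatives.

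At level $p$ I would first establish $\mathcal{K}_p$, then $\mathcal{R}_p$ and $\mathcal{A}_p$. To obtain $N^{p-1}\kappa|_M$ I apply $\nabla_N^{\,p-2}$ to \eqref{kappa-N} and restrict to $M$; to obtain $\nabla_N^p R|_M$ and $\nabla_N^p A|_M$ I apply $\nabla_N^{\,p-1}$ to \eqref{Omega-N} and \eqref{A-N}. In each case the right-hand sides carry tangential operators $\nabla_\a,\nabla_{\ol\b},\nabla_T$ and products of curvature quantities, so the repeated $\nabla_N$ must be commuted past the tangential derivatives. Each commutator $[\nabla_N,\nabla_\a]$, $[\nabla_N,\nabla_{\ol\b}]$, $[\nabla_N,\nabla_T]$ is an algebraic operator whose coefficients are the torsion and curvature of the Graham--Lee connection; by the structure equations and \eqref{curv-GL} these are polynomials in $h_{\a\ol\b}, R_{\a\ol\b\g\ol\mu}, A_{\a\b}, \kappa$ and their tangential derivatives, carrying no extra $N$-derivative. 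Hence after the commutations every term is either a tangential derivative of a lower-order normal derivative of $R,A,\kappa$, or a product in which a normal derivative of the Graham--Lee curvature or torsion of order $<p$ appears; all of these are controlled by the inductive hypothesis. One checks that $\mathcal{K}_p$ is determined using only data of levels $<p$, and that $\mathcal{R}_p,\mathcal{A}_p$ are then determined from $\mathcal{K}_p$ (just established) together with $\mathcal{R}_{<p},\mathcal{A}_{<p}$, so the induction closes.

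The delicate point, which fixes the range $p\le n+1$, is the solvability of the $\kappa$-recursion. Writing $\xi=N+\tfrac{i}{2}T$, the only term of \eqref{kappa-N} of normal order two is $-\mu\r\,\ol\xi\xi\kappa$, whose leading part is $-\mu\r\,N^2\kappa$; since $\r|_M=0$, $N\r|_M=1$ and $\mu|_M=1$, applying $\nabla_N^{\,p-2}$ and restricting to $M$ forces exactly one $N$ to fall on $\r$, producing $N^{p-1}\kappa$ with coefficient $-(p-2)$, while the term $nN\kappa$ contributes $n$. Thus the coefficient of $N^{p-1}\kappa$ is $n-p+2$, which is nonzero precisely when $p\le n+1$ and degenerates at $p=n+2$. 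Simultaneously, since the error term in \eqref{kappa-N} is $O(\r^n)$, its $\nabla_N^{\,p-2}$-derivative still carries a factor of $\r$ on $M$ as long as $p-2<n$, i.e. $p\le n+1$, so it drops out upon restriction. The nondegeneracy of this leading coefficient together with the vanishing of the Monge--Amp\`ere error therefore makes the recursion solvable exactly up to order $p=n+1$; I expect this coefficient analysis to be the main obstacle to verify with care, the remaining work being the mechanical commutator bookkeeping described above.
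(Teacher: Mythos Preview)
Your proposal is correct and follows essentially the same approach as the paper: repeatedly differentiating \eqref{Omega-N}, \eqref{A-N}, \eqref{kappa-N} in the $N$-direction and commuting $\nabla_N$ past tangential derivatives using the torsion and curvature of the Graham--Lee connection. Your coefficient analysis for the $\kappa$-recursion (yielding the leading coefficient $n+2-p$ and showing the $O(\r^n)$ error drops out for $p\le n+1$) is more explicit than what the paper records, but it is exactly the computation underlying the stated range.
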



\section{Renormalized characteristic forms and CR invariants}\label{renormalized-chara}
\subsection{Burns--Epstein's renormalized connection}\label{BE-conn}
Let $\Omega$ be a strictly pseudoconvex domain in an $(n+1)$-dimensional complex manifold $X$. We fix a Fefferman defining density $\br\in\wt\calE(1)$ and let $g$ be the Cheng--Yau metric, i.e., a hermitian metric on $\Omega$ which agrees with the K\"ahler metric $-i\pa\ol\pa\log\br$ near the boundary $M$. We denote by $\psi_i{}^j$ the connection 1-forms of the Chern connection of $g$; it agrees with the Levi-Civita connection near $M$. 

We assume that $M$ admits a pseudo-Einstein contact form $\th$, and take the associated Fefferman defining function $\r$. Burns--Epstein \cite{BE2} introduced a renormalized connection $\ol\nabla$ by 
\begin{equation}\label{theta-def}
\theta_i{}^j:=\psi_i{}^j+\frac{1}{\rho}(\rho_k\d_i{}^j+\rho_i \d_k{}^j)\theta^k,
\end{equation}
where $\pa\r=\r_j\th^j$, in an arbitrary $(1, 0)$-coframe $\{\th^j\}$. Note that $\ol\nabla$ depends on the choice of $\r$. Since $\r$ is determined by $\th$ near $M$, so is 
$\ol\nabla$. They showed that $\ol\nabla$ extends to a linear connection on $\ol\Omega$:

\begin{prop}[\cite{BE2}]\label{theta-in-z}
Let $(z^1, \dots, z^{n+1})$ be holomorphic coordinates around a point on the boundary $M$.
Then we have
\begin{equation}\label{theta-coordinate}
\theta_i{}^j=g^{j\ol l}\Bigl(-\frac{\rho_{ki\ol l}}{\rho}+\frac{\rho_{\ol l}\rho_{ki}}{\rho^2}\Bigr)dz^k,
\end{equation}
where $\rho_{ki\ol l}, \rho_{\ol l}, \rho_{ki}$ denote the coordinate derivatives. In particular, $\th_i{}^j$ is smooth up to the boundary.
\end{prop}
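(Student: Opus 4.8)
The plan is to evaluate both sides of \eqref{theta-def} in the holomorphic frame $\{\pa/\pa z^i\}$ and to reduce the assertion to a single algebraic identity among coordinate derivatives of $\r$. Since $g=-i\pa\ol\pa\log\r$ near $M$, its components and the forms of its Chern connection in the coordinates $(z^i)$ are
\[
g_{i\ol l}=\pa_{z^i}\pa_{\ol z^l}(-\log\r)=-\frac{\r_{i\ol l}}{\r}+\frac{\r_i\r_{\ol l}}{\r^2},
\qquad
\psi_i{}^j=g^{j\ol l}(\pa_{z^k}g_{i\ol l})\,dz^k.
\]
I would take $\th^k=dz^k$ in \eqref{theta-def} and write $\d_i{}^j=g^{j\ol l}g_{i\ol l}$ and $\d_k{}^j=g^{j\ol l}g_{k\ol l}$ so as to pull the correction term under the common factor $g^{j\ol l}$. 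Then \eqref{theta-coordinate} becomes equivalent to the pointwise identity
\[
\pa_{z^k}g_{i\ol l}+\frac{\r_k}{\r}\,g_{i\ol l}+\frac{\r_i}{\r}\,g_{k\ol l}
=-\frac{\r_{ki\ol l}}{\r}+\frac{\r_{\ol l}\r_{ki}}{\r^2}.
\]

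I would prove this identity by differentiating $g_{i\ol l}$ directly. The expansion of $\pa_{z^k}g_{i\ol l}$ produces one term of order $\r^{-1}$, three of order $\r^{-2}$, and one of order $\r^{-3}$; after adding $\r^{-1}\r_k g_{i\ol l}$ and $\r^{-1}\r_i g_{k\ol l}$, the two cubic contributions sum to zero and two of the three quadratic terms cancel against the correction, leaving exactly the right-hand side. This cancellation of the most singular pieces is the whole point of the Burns--Epstein correction, and it yields \eqref{theta-coordinate}.

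The hard part will be the smoothness up to $M$. My plan is to factor $g_{i\ol l}=\r^{-2}C_{i\ol l}$, where $C_{i\ol l}:=\r_i\r_{\ol l}-\r\,\r_{i\ol l}$ is the tensor $(\pa\r\otimes\ol\pa\r-\r\,\pa\ol\pa\r)_{i\ol l}$, smooth on $\ol\Omega$; then $g^{j\ol l}=\r^2 C^{j\ol l}$ and \eqref{theta-coordinate} collapses to
\[
\th_i{}^j=C^{j\ol l}\bigl(\r_{\ol l}\r_{ki}-\r\,\r_{ki\ol l}\bigr)\,dz^k.
\]
The obstacle is that $C_{i\ol l}|_M=\r_i\r_{\ol l}$ has rank one, so $C^{j\ol l}$ has a first-order pole along $M$ in the directions of $\ker\pa\r$, and the two numerator terms must each absorb it. I expect to handle this by passing to the Graham--Lee frame $\{\th^\a,\pa\r\}$, in which $C_{i\ol l}$ is block diagonal with tangential block $\r\,h_{\a\ol\b}$ and transverse entry $1+\kappa\r$. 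Two facts then follow: first, contracting the transverse field $\xi$ gives $C_{i\ol l}\xi^i=(1+\kappa\r)\r_{\ol l}$, hence
\[
C^{j\ol l}\r_{\ol l}=\frac{\xi^j}{1+\kappa\r},
\]
which is smooth and makes the first numerator term contribute the smooth quantity $\frac{\xi^j\r_{ki}}{1+\kappa\r}\,dz^k$; second, $\r C^{j\ol l}$ extends smoothly to $\ol\Omega$ (its Graham--Lee components are $h^{\a\ol\b}$ and $\r(1+\kappa\r)^{-1}$), so, since $\r_{ki\ol l}\,d\ol z^l=\ol\pa(\r_{ki})$ is a genuine $(0,1)$-form, the second term $\r\,C^{j\ol l}\r_{ki\ol l}\,dz^k$ is smooth as well. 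Combining the two, and using that the transition between the coordinate and Graham--Lee frames is smooth and invertible up to $M$, shows that $\th_i{}^j$ extends smoothly to $\ol\Omega$, as claimed.
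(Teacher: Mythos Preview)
Your argument is correct and essentially the same as the paper's. For the identity \eqref{theta-coordinate}, the paper verifies that the right-hand side, minus the correction term, satisfies the defining equation $\psi'_i{}^j g_{j\ol m}=\pa g_{i\ol m}$ of the Chern connection; your reduction to the pointwise identity for $\pa_{z^k}g_{i\ol l}$ is the same computation written in the other direction, and your cancellation check is exactly what appears (implicitly) in the paper's display for $\psi'_i{}^j g_{j\ol m}$.

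For smoothness, the paper simply records the two estimates $g^{j\ol l}=O(\r)$ and $g^{j\ol l}\r_{\ol l}=O(\r^2)$ and concludes immediately. Your longer route via $C_{i\ol l}=\r_i\r_{\ol l}-\r\,\r_{i\ol l}$ and the Graham--Lee block form is precisely a justification of those two estimates: $\r C^{j\ol l}$ smooth gives $g^{j\ol l}=\r\cdot(\r C^{j\ol l})=O(\r)$, and $C^{j\ol l}\r_{\ol l}=\xi^j/(1+\kappa\r)$ smooth gives $g^{j\ol l}\r_{\ol l}=\r^2 C^{j\ol l}\r_{\ol l}=O(\r^2)$. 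So you have supplied the detail the paper omits, but the argument is the same. (Your parenthetical remark that $\r_{ki\ol l}\,d\ol z^l$ is a genuine $(0,1)$-form is not needed here---smoothness of $\r C^{j\ol l}$ and of $\r_{ki\ol l}$ already suffices.)
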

\begin{proof}
It suffices to show the identity
\[
\psi_i{}^j=g^{j\ol l}\Bigl(-\frac{\rho_{ki\ol l}}{\rho}+\frac{\rho_{\ol l}\rho_{ki}}{\rho^2}\Bigr)dz^k-\frac{1}{\rho}(\rho_k\d_i{}^j+\rho_i \d_k{}^j)dz^k.
\]
If we denote the right-hand side of this by $\psi'_i{}^j$, we have
\begin{align*}
\psi'_i{}^jg_{j\ol m}&=\Bigl(-\frac{\rho_{ki\ol m}}{\rho}+\frac{\rho_{\ol m}\rho_{ki}}{\rho^2}\Bigr)dz^k-\frac{1}{\rho}(g_{i\ol m}\rho_k+g_{k\ol m}\rho_i)dz^k \\
&=-\frac{\rho_{ki\ol m}}{\rho}dz^k+\frac{1}{\rho^2}(\rho_{i\ol m}\rho_k+\rho_{k\ol m}\rho_i+
\rho_{ki}\rho_{\ol m})dz^k 
 -\frac{2\rho_i\rho_{\ol m}\rho_k}{\rho^3}dz^k \\
&=\pa g_{i\ol m}.
\end{align*}
This implies $\psi'_i{}^j=\psi_i{}^j$. Since $g^{j\ol l}=O(\r)$ and  $g^{j\ol l}\r_{\ol l}=O(\r^2)$, the connection form $\th_i{}^j$ is smooth up to $M$.
\end{proof}

Next we consider the renormalized curvature. Let 
\[
\Psi_i{}^j=d\psi_i{}^j-\psi_i{}^k\wedge\psi_k{}^j, \quad 
\Theta_i{}^j=d\theta_i{}^j-\theta_i{}^k\wedge\theta_k{}^j
\]
be the curvature forms. Since $\psi_i{}^j$ is the Chern connection, in a holomorphic frame, $\psi_i{}^j$ and $\theta_i{}^j$ are $(1, 0)$-forms and $\Psi_i{}^j=\ol\pa\psi_i{}^j$. 
Then we have
\begin{align*}
\Psi_i{}^j=\ol\pa\Bigl(\theta_i{}^j-\frac{1}{\rho}(\rho_k \d_i{}^j+\rho_i \d_k{}^j)dz^k\Bigr) 
=W'_i{}^j-K'_i{}^j,
\end{align*}
where $W'_i{}^j:=(\Theta_i{}^j)^{(1, 1)}$ is the $(1, 1)$-component of $\Theta_i{}^j$ and 
\[
K'_i{}^j:=-\bigl((\log\r)_{k\ol l}\d_i{}^j+(\log\r)_{i\ol l}\d_k{}^j\bigr)dz^k\wedge dz^{\ol l}.
\]
Then $W'_i{}^j$ is smooth up to the boundary. On the other hand, Burns--Epstein \cite{BE2} defined the renormalized curvature of $g$ by 
\[
W_i{}^j:=\Psi_i{}^j+K_i{}^j, \ {\rm where}\ K_i{}^j:=(g_{k\ol l}\d_i{}^j+g_{i\ol l}\d_k{}^j)dz^k\wedge dz^{\ol l}.
\]
Since $g_{i\ol j}=-\pa_i\pa_{\ol j}\log\r$ near $M$, these coincide in a neighborhood of the boundary.

We will describe the relation between the renormalized connection $\ol\nabla$ and the Levi-Civita connection of the ambient metric.
Let $\wt g=-i\pa\ol\pa\br$ be the ambient metric on $\wt X$ near $\mathcal{N}$, and $\wt\omega_I{}^J$, $\wt\Omega_I{}^J$ be the connection and the curvature forms of $\wt g$ respectively. For the pseudo-Einstein contact form $\th$, we can take local coordinates $(z^1, \dots, z^{n+1})$ around a point on $M$ which are holomorphic in $\Omega$, in such a way that the associated pseudo-Einstein CR scale corresponds to $\th$.  We take a fiber coordinate $z^0$ of $\wt X$ by writing as $\zeta=(z^0)^{n+2}dz^1\wedge\cdots\wedge dz^{n+1}$ so that $\br=|z^0|^2\r$ as a function on $\wt X$. Then we have
\begin{prop}[{\cite[Lemma 3.2]{BE2}}]
In the holomorphic frame $\{dz^0/z^0, dz^1, \dots, dz^{n+1}\}$ around a boundary point, we have
\begin{gather}
\label{ambient-conn-coordinate}
\wt\omega_I{}^J=
\begin{pmatrix}
\displaystyle\frac{dz^0}{z^0} & u_i \\
dz^j & \theta_i{}^j+\displaystyle\frac{dz^0}{z^0}\d_i{}^j
\end{pmatrix}, \\
\label{ambient-curvature-coordinate}
\wt\Omega_I{}^J=
\begin{pmatrix}
0 & du_i+u_k\wedge\theta_i{}^k \\
0 & \Theta_i{}^j-u_i\wedge dz^j
\end{pmatrix} 
=\begin{pmatrix}
0 & du_i+u_k\wedge\theta_i{}^k \\
0 & W_i{}^j
\end{pmatrix},
\end{gather}
where
\[
u_i:=\frac{1}{\r}(\rho_{ik} dz^k-\rho_k\theta_i{}^k).
\]
\end{prop}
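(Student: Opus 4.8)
The plan is to realize the two left-hand quantities as the Chern connection and curvature of the K\"ahler metric $\wt g=-i\pa\ol\pa\br$ expressed in the given holomorphic frame, and then to match the blocks against the coordinate formula for $\th_i{}^j$ already recorded in Proposition \ref{theta-in-z}. First I would exploit the homogeneity $\br=|z^0|^2\r$ together with the fact that the frame $\{z^0\pa_{z^0},\pa_{z^1},\dots,\pa_{z^{n+1}}\}$ dual to $\{dz^0/z^0,dz^1,\dots,dz^{n+1}\}$ is holomorphic. A direct computation of $\wt g$ in this frame shows that its matrix factorizes as $G_{I\ol J}=-|z^0|^2 M_{I\ol J}$, where $M_{I\ol J}$ is the $(n+2)\times(n+2)$ Fefferman matrix with $M_{0\ol 0}=\r$, $M_{0\ol j}=\r_{\ol j}$, $M_{i\ol 0}=\r_i$, $M_{i\ol j}=\r_{i\ol j}$ --- precisely the matrix whose determinant defines $\mathcal{J}[\br]$ in \eqref{MA}. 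Since the frame is holomorphic and $\wt g$ is K\"ahler, the Levi--Civita connection agrees with the Chern connection $\wt\omega_I{}^J=G^{J\ol K}\pa G_{I\ol K}$, the overall sign of $G$ being irrelevant.

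Second, I would peel off the $|z^0|$-dependence. The scalar factor $|z^0|^2$ cancels between $G^{-1}$ and $\pa G$; since $\pa(|z^0|^2)=|z^0|^2\,dz^0/z^0$ while $M_{I\ol J}$ is independent of $z^0$, this yields $\wt\omega_I{}^J=\d_I{}^J\,dz^0/z^0+M^{J\ol K}\pa_l M_{I\ol K}\,dz^l$, accounting for all the diagonal $dz^0/z^0$ terms. It then remains to identify the blocks of $M^{J\ol K}\pa_l M_{I\ol K}\,dz^l$. The $z^0$-row is immediate from the homogeneity identity $\pa_l M_{0\ol K}=M_{l\ol K}$ (a spatial derivative of the zeroth row is the $l$-th row), giving $M^{0\ol K}M_{l\ol K}\,dz^l=0$ and $M^{j\ol K}M_{l\ol K}\,dz^l=dz^j$. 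For the remaining blocks I would invert $M$ by a Schur-complement computation, whose key output is that the lower $(n+1)\times(n+1)$ block of $M^{-1}$ equals $-\r^{-1}g^{i\ol j}$, the inverse Cheng--Yau metric; combined with the companion identity for $M^{i\ol 0}$, the bottom-right block then reproduces exactly the coordinate expression for $\th_i{}^j$ of Proposition \ref{theta-in-z}, and, after inserting the definition of $u_i$ and the standard contraction identities for $M^{-1}$, the off-diagonal block reduces to $u_i$.

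Third, for the curvature I would write $\wt\omega=(dz^0/z^0)\,\mathrm{id}+\wh\omega$ with $\wh\omega:=\wt\omega-(dz^0/z^0)\,\mathrm{id}$. Because $d(dz^0/z^0)=0$ and the scalar-diagonal term anticommutes through the wedge, the two cross terms cancel and $\wt\Omega=d\wt\omega-\wt\omega\wedge\wt\omega$ collapses to $d\wh\omega-\wh\omega\wedge\wh\omega$. Block-multiplying and using that $\r_{kl}$ and $\r_{kl\ol m}$ are symmetric in $k,l$ shows $dz^k\wedge u_k=0$ and $dz^k\wedge\th_k{}^j=0$, which forces the vanishing of the first column in the stated matrix, while the off-diagonal products produce precisely $du_i+u_k\wedge\th_i{}^k$ and $\Theta_i{}^j-u_i\wedge dz^j$. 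Finally, since the Chern curvature is of pure type $(1,1)$, the $(2,0)$-part of $\Theta_i{}^j-u_i\wedge dz^j$ must vanish, and its $(1,1)$-part is $\ol\pa\th_i{}^j=W'_i{}^j$, which equals $W_i{}^j$ where $g=-i\pa\ol\pa\log\r$; this gives the second displayed equality.

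The main obstacle I expect is the block identification of the previous step: extracting $u_i$ from $M^{J\ol K}\pa_l M_{I\ol K}$ and confirming that the bottom-right block is exactly Burns--Epstein's $\th_i{}^j$ requires a careful inversion of the Fefferman matrix and the contraction identities tying its inverse blocks to $g^{i\ol j}$. Everything else is bookkeeping driven by the homogeneity identity $\pa_l M_{0\ol K}=M_{l\ol K}$ and the symmetry of the higher derivatives of $\r$.
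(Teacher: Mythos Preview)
Your approach is correct and arrives at the same destination, but by the opposite route from the paper. The paper \emph{verifies} the claimed connection matrix rather than deriving it: it posits the right-hand side of \eqref{ambient-conn-coordinate}, multiplies on the left by $\wt g_{K\ol J}$, and checks that the result equals $\pa\wt g_{I\ol J}$, the defining equation for the Chern connection. The only nontrivial inputs are the coordinate formula \eqref{theta-coordinate} for $\th_i{}^j$ and the definition of $u_i$, and no matrix inversion is ever needed. Your plan instead \emph{derives} the connection from $G^{-1}\pa G$, which forces you through the Schur-complement inversion of the Fefferman matrix to extract the blocks; this is precisely the ``main obstacle'' you anticipated, and it is genuine extra work that the paper's verification sidesteps. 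Either method is fine: yours is more self-contained and explains \emph{why} the blocks take this form (the Schur complement of $\r$ in $M$ being $-\r g_{i\ol j}$ is the key structural identity), whereas the paper's is shorter because it leverages the already-proved Proposition~\ref{theta-in-z}. For the curvature, your argument---stripping off the central $dz^0/z^0$ summand, computing $d\wh\omega-\wh\omega\wedge\wh\omega$ blockwise, and using the symmetry of $\r_{kl}$ to kill $dz^k\wedge u_k$ and $dz^k\wedge\th_k{}^j$---matches the paper's exactly, as does the final step invoking that the Chern curvature is of type $(1,1)$.
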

\begin{proof}
We denote the right-hand side of \eqref{ambient-conn-coordinate} by 
$\wt\omega'_I{}^J$. Then we have
\begin{align*}
\wt g_{K\ol J}\wt\omega'_I{}^K
&=-|z^0|^2\begin{pmatrix}
\rho & \rho_k \\
\rho_{\ol j} & \rho_{k\ol j}
\end{pmatrix}
\begin{pmatrix}
dz^0/z^0 & u_i \\
dz^k & \theta_i{}^k+(dz^0/z^0)\d_i{}^k
\end{pmatrix} \\
&=-|z^0|^2
\begin{pmatrix}
\rho (dz^0/z^0)+\pa\rho & \rho_i(dz^0/z^0)+\rho u_i+\rho_k\theta_i{}^k \\
\rho_{\ol j}(dz^0/z^0)+\pa\rho_{\ol j} & \rho_{i\ol j}(dz^0/z^0)+u_i\rho_{\ol j}+
\rho_{k\ol j}\theta_i{}^k
\end{pmatrix}.
\end{align*}
By \eqref{theta-coordinate}, the 1-forms in the matrix are computed as
\begin{align*}
\rho u_i+\rho_k\theta_i{}^k&=\rho_{ik}dz^k-\rho_k\theta_i{}^k+\rho_k\theta_i{}^k \\
&=\pa\rho_i, \\
u_i\rho_{\ol j}+\rho_{k\ol j}\theta_i{}^k
&=\r^{-1}(\rho_{ik}\rho_{\ol j}dz^k-\rho_k\rho_{\ol j}\theta_i{}^k)+\rho_{k\ol j}\theta_i{}^k \\
&=\r^{-1}\rho_{ik}\rho_{\ol j}dz^k-\rho g_{k\ol j}\theta_i{}^k \\
&=\pa \rho_{i\ol j}.
\end{align*}
Thus, we obtain $\wt g_{K\ol J}\wt\omega'_I{}^K=\pa\wt g_{I\ol J}$ and hence $\wt\omega'_I{}^J=\wt\omega_I{}^J$. 

Since $\ol\nabla$ is torsion-free near $M$, we have $dz^k\wedge \th_k{}^j=d(dz^k)=0$ and hence 
\begin{equation}\label{u-z}
u_k\wedge dz^k=0.
\end{equation}
By using these formulas and \eqref{ambient-conn-coordinate}, we obtain the first equality of \eqref{ambient-curvature-coordinate}. The second equality follows from the fact that 
$\wt\Omega_I{}^J$ are $(1, 1)$-forms and $(\Theta_i{}^j)^{(1, 1)}=W_i{}^j$ near $M$.
\end{proof}

Next, we will describe the Levi-Civita connection of $\wt g$ in terms of the Graham--Lee connection $\varphi_\a{}^\b$ for $\r$. Let $\{Z_\a, Z_\infty=\xi\}$ be a Graham--Lee frame near $M$ and $\{\th^\a, \th^\infty=\pa\r\}$ the dual coframe. In the coframe $\{\th^0=dz^0/z^0, \th^\a, \th^\infty\}$, the ambient metric is given by
\[
(\wt g_{A\ol B})=|z^0|^2
\begin{pmatrix}
-\r & 0 & -1 \\
0 & h_{\a\ol\b} & 0 \\
-1 & 0 & \kappa
\end{pmatrix}.
\]
We first recall from \cite{Mar} the relation between $\th_i{}^j$ and the Graham--Lee connection:
\begin{prop}[{\cite[Proposition 3.5]{Mar}}]
Near the boundary $M$, the renormalized connection forms $\th_i{}^j$ are given by 
\begin{equation}\label{theta-phi}
\begin{aligned}
\theta_\a{}^\b&=\varphi_\a{}^\b+i\kappa\vartheta\d_\a{}^\b,  & 
\theta_\infty{}^\b&=-\kappa\theta^\b+iA^\b{}_{\ol\g}\theta^{\ol\g}-\kappa^\b\ol\pa\rho, \\
\theta_\a{}^\infty&=h_{\a\ol\g}\theta^{\ol\g}+\mu\rho\kappa_\a\pa\rho+i\mu\rho A_{\a\g}\theta^\g, & 
\theta_\infty{}^\infty&=\kappa\ol\pa\rho-\mu\kappa^2\rho\pa\rho
+\mu\rho\pa\kappa,
\end{aligned}
\end{equation}
where $\mu:=(1+\kappa\r)^{-1}$.
\end{prop}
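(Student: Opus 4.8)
The plan is to compute the Chern connection forms $\psi_i{}^j$ of the Cheng--Yau metric $g$ directly in the Graham--Lee frame $\{Z_\a,\,\xi\}$ and then substitute them into \eqref{theta-def}. Since $g$ is K\"ahler near $M$, its Chern connection coincides with the Levi-Civita connection, hence is the unique connection on $T^{1,0}X$ that is both compatible with $g$,
\[
dg_{i\ol j}=\psi_i{}^k g_{k\ol j}+g_{i\ol k}\,\ol{\psi_j{}^k},
\]
and torsion-free,
\[
d\th^j=\th^i\wedge\psi_i{}^j.
\]
It therefore suffices to exhibit forms $\psi_i{}^j$ satisfying these two conditions; uniqueness identifies them with the Chern connection, and \eqref{theta-def} then gives $\th_i{}^j$. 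Equivalently, one may take the right-hand side of \eqref{theta-phi}, subtract the correction term in \eqref{theta-def} to define candidate forms $\psi'_i{}^j$, and verify that $\psi'_i{}^j$ meets both requirements; I will follow this verification route, which is cleaner to organize.

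First I would record the metric in the coframe $\{\th^\a,\ \th^\infty=\pa\r\}$. From the expression of $-\pa\ol\pa\log\r$ recalled above,
\[
g_{\a\ol\b}=\frac{h_{\a\ol\b}}{\r},\qquad g_{\infty\ol\infty}=\frac{1+\kappa\r}{\r^{2}}=\frac{\mu^{-1}}{\r^{2}},\qquad g_{\a\ol\infty}=0.
\]
The exterior derivatives required are supplied by the Graham--Lee structure equations: $d\th^\a$ is given there, $d\th^\infty=d(\pa\r)=\ol\pa\pa\r=h_{\a\ol\b}\th^\a\wedge\th^{\ol\b}+\kappa\,\pa\r\wedge\ol\pa\r$, and $dh_{\a\ol\b}=\varphi_\a{}^\g h_{\g\ol\b}+h_{\a\ol\g}\,\ol{\varphi_\b{}^\g}$, while $dg_{\infty\ol\infty}$ brings in $d\kappa$, controlled through the defining relation $-\xi\lrcorner\,\pa\ol\pa\r=\kappa\,\ol\pa\r$ and its tangential derivatives $\kappa_\a,\kappa^\b$. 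In the correction term of \eqref{theta-def} I would use the simplification $\pa\r=\r_j\th^j=\th^\infty$, so that $\r_\a=0$ and $\r_\infty=1$ and the correction becomes $\r^{-1}(\th^\infty\d_i{}^j+\d_i{}^\infty\th^j)$; this contributes nothing to the torsion, since $\th^i\wedge\r^{-1}(\th^\infty\d_i{}^j+\d_i{}^\infty\th^j)=0$, so $\psi'$ and the candidate forms define the same torsion.

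I would then check the two conditions block by block over the index ranges $\a\ol\b,\ \a\ol\infty,\ \infty\ol\b,\ \infty\ol\infty$. In the $\a\ol\b$ block the metric equation fixes only the Hermitian-symmetric part of $\psi_\a{}^\b-\varphi_\a{}^\b$, namely a $\d_\a{}^\b$-multiple of $d\r$; the skew part, and in particular the term $i\kappa\vartheta\,\d_\a{}^\b$ surviving after renormalization, is pinned down by the torsion-free equation for $d\th^\a$. The transverse blocks $\a\ol\infty$ and $\infty\ol\b$ are governed by the remaining compatibility and torsion equations, in which $\kappa$, its derivatives $\kappa_\a,\kappa^\b$ and the Tanaka--Webster torsion $A_{\a\b}$ enter; the weight $\mu=(1+\kappa\r)^{-1}$ is produced by inverting $g_{\infty\ol\infty}$, and the $\infty\ol\infty$ block yields $\th_\infty{}^\infty$ and brings in $\pa\kappa$.

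The main obstacle is the transverse part of the computation, i.e.\ the determination of $\th_\a{}^\infty$, $\th_\infty{}^\b$ and $\th_\infty{}^\infty$. These couple the $\a\ol\infty$ and $\infty\ol\b$ compatibility equations to the transverse-curvature identity and demand careful bookkeeping of the powers of $\r$ and of $\mu$, so that the singular $\r^{-1}$ and $\r^{-2}$ terms generated by $\psi_i{}^j$ are exactly absorbed by the renormalization term of \eqref{theta-def}, leaving the boundary-smooth expressions of \eqref{theta-phi}. Once these cancellations are confirmed and the diagonal block is matched, uniqueness of the Levi-Civita connection (equivalently, that $\psi'$ is both metric-compatible and torsion-free) closes the argument.
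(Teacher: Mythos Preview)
The proposition is quoted here from \cite{Mar} without a self-contained proof, so there is no argument in the present paper to compare against directly. That said, the paper remarks that equation \eqref{d-h} ``is shown in the proof of {\cite[Proposition 3.5]{Mar}}'', which indicates that the original proof proceeds exactly as you propose: one writes down the candidate forms and checks the two structure equations (metric compatibility and vanishing torsion) block by block in the Graham--Lee frame. Your plan is correct and matches that route; the observation that the correction term $\rho^{-1}(\rho_k\delta_i{}^j+\rho_i\delta_k{}^j)\theta^k$ is torsion-neutral (because $\rho_k\theta^k=\theta^\infty$ in this frame) is precisely what reduces the torsion check for $\psi'$ to that for the candidate $\theta_i{}^j$, i.e.\ to equation \eqref{d-theta}.
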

When $\r$ is a Fefferman defining function, this proposition and Proposition \ref{exp-GL} allow us to relate geometric quantities of $\ol\nabla$ to the Tanaka--Webster curvature quantities on $M$.

In a Graham--Lee frame, we also have
\begin{align}
\label{d-h}
d h_{\a\ol\b}&=\theta_\a{}^\g h_{\g\ol\b}+h_{\a\ol\g} \theta_{\ol\b}{}^{\ol\g}, \\
\label{d-kappa-rho}
\mu d(\kappa\rho)&=\theta_\infty{}^\infty+\theta_{\ol\infty}{}^{\ol\infty}, \\
\label{d-theta}
d\theta^i&=\theta^j\wedge\theta_j{}^i.
\end{align}
The equation \eqref{d-h} is shown in the proof of {\cite[Proposition 3.5]{Mar}}, and the equation \eqref{d-theta} follows from the fact that $\ol\nabla$ is torsion-free near $M$.
The equation \eqref{d-kappa-rho} is derived by
\begin{align*}
\theta_\infty{}^\infty+\theta_{\ol\infty}{}^{\ol\infty}=\psi_\infty{}^\infty+\psi_{\ol\infty}{}^{\ol\infty}
+2\r^{-1}d\rho=g_{\infty\ol\infty}^{-1}\,d g_{\infty\ol\infty}+2\r^{-1}d\rho.
\end{align*} 
Using these equations, we obtain the following 
\begin{prop} \label{ambient-connection}
In the coframe $\{\th^0=dz^0/z^0, \th^\a, \th^\infty\}$, the connection 1-forms $\wt\omega_A{}^B$ of the ambient metric $\wt g$ are given by
\begin{align*}
\wt\omega_0{}^0&=\frac{dz^0}{z^0}, &  \wt\omega_0{}^\a&=\theta^\a, 
 & \wt\omega_\b{}^\a&=\theta_\b{}^\a+\frac{dz^0}{z^0}\d_\b{}^\a, &   \wt\omega_\infty{}^\a&=\theta_\infty{}^\a, \\
\wt\omega_0{}^\infty&=\pa\rho, & \wt\omega_\b{}^\infty&=\theta_\b{}^\infty, & 
\wt\omega_\infty{}^\infty&=\theta_\infty{}^\infty+\frac{dz^0}{z^0}, 
\end{align*}
\begin{align*}
\wt\omega_\a{}^0&=-\r^{-1}(\theta_\a{}^\infty-h_{\a\ol\g}\theta^{\ol\g})
=-\mu\kappa_\a\pa\rho-i\mu A_{\a\g}\theta^\g, \\
\wt\omega_\infty{}^0&=-\r^{-1}(\theta_\infty{}^\infty-\kappa\ol\pa\rho)=
\mu\kappa^2\pa\rho-\mu\pa\kappa.
\end{align*}
\end{prop}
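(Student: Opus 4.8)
The plan is to identify the one-forms $\wt\omega'_A{}^B$ displayed in the proposition with the Levi-Civita connection forms $\wt\omega_A{}^B$ of the ambient metric $\wt g$ by a uniqueness argument. Since $\wt g$ is K\"ahler, its Levi-Civita connection preserves $T^{1,0}\wt X$ and is the unique $T^{1,0}\wt X$-preserving connection that is torsion-free and compatible with $\wt g$; in the $(1,0)$-coframe $\{\th^0,\th^\a,\th^\infty\}$ these two conditions read
\[
d\th^A=\th^B\wedge\wt\omega_B{}^A,\qquad
d\wt g_{A\ol B}=\wt\omega_A{}^C\wt g_{C\ol B}+\wt g_{A\ol C}\,\ol{\wt\omega_B{}^C}.
\]
I would therefore simply check that $\wt\omega'_A{}^B$ solves both systems. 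Note that, unlike the holomorphic coframe underlying \eqref{ambient-conn-coordinate}, the present coframe is not holomorphic, so the forms $\th_i{}^j$ are not of type $(1,0)$ and the shortcut $\wt g_{K\ol J}\wt\omega_I{}^K=\pa\wt g_{I\ol J}$ is unavailable; this is why the full pair of structure equations is needed.

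For the torsion equation I would split on the value of $A$. When $A=\a$ or $A=\infty$, the entries $\wt\omega'_0{}^\a=\th^\a$ and $\wt\omega'_0{}^\infty=\pa\r$ combine with the diagonal corrections $\frac{dz^0}{z^0}\d$ sitting inside $\wt\omega'_\b{}^\a$ and $\wt\omega'_\infty{}^\infty$; the $\th^0$-terms cancel pairwise and what remains is exactly \eqref{d-theta}, $d\th^i=\th^j\wedge\th_j{}^i$. When $A=0$ one uses $d(dz^0/z^0)=0$ together with the explicit $\wt\omega'_\a{}^0$ and $\wt\omega'_\infty{}^0$, the cancellation resting on the symmetry $A_{\a\b}=A_{\b\a}$, which kills $A_{\a\g}\th^\a\wedge\th^\g$, and on the splitting $\pa\kappa=\kappa_\g\th^\g+(\xi\kappa)\pa\r$.

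For the metric-compatibility equation I would proceed block by block in the index splitting $\{0,\a,\infty\}$, writing $f:=|z^0|^2$ so that $\frac{dz^0}{z^0}+\ol{\frac{dz^0}{z^0}}=df/f$. The $(\a,\ol\b)$-block collapses to \eqref{d-h} once the diagonal $\frac{dz^0}{z^0}\d$-terms reproduce $df/f$, while the $(0,\ol0)$-, $(0,\ol\infty)$- and $(0,\ol\b)$-blocks close by direct cancellation, the apparent pole in $\wt\omega'_\a{}^0=-\r^{-1}(\th_\a{}^\infty-h_{\a\ol\g}\th^{\ol\g})$ and $\wt\omega'_\infty{}^0=-\r^{-1}(\th_\infty{}^\infty-\kappa\ol\pa\r)$ being balanced against the metric entries $\wt g_{0\ol0}=-|z^0|^2\r$ and $\wt g_{0\ol\infty}=\wt g_{\infty\ol0}=-|z^0|^2$. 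The genuinely delicate blocks are $(\infty,\ol\infty)$ and $(\a,\ol\infty)$ together with its conjugate $(\infty,\ol\b)$: here one must substitute \eqref{theta-phi} to expand $\th_\a{}^\infty$ and $\th_\infty{}^\g$, feed in \eqref{d-kappa-rho} for the combination $\th_\infty{}^\infty+\th_{\ol\infty}{}^{\ol\infty}$, and use $\mu(1+\kappa\r)=1$ (equivalently $\r^{-1}+\kappa=\r^{-1}\mu^{-1}$) repeatedly so that all $\r^{-1}$-terms, as well as the $h_{\a\ol\g}\th^{\ol\g}$-, $\kappa_\a\pa\r$- and $A_{\a\g}\th^\g$-terms, cancel against one another.

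The main obstacle I anticipate is the bookkeeping in these $\infty$-mixing blocks, above all $(\a,\ol\infty)$: one has to verify that the three groups of terms---the $\r^{-1}$-singular part, the $\kappa\,\th_\a{}^\infty$ part, and the $h_{\a\ol\g}\th_{\ol\infty}{}^{\ol\g}$ part---cancel completely, which forces a simultaneous use of \eqref{theta-phi}, \eqref{d-kappa-rho} and the definition $\mu=(1+\kappa\r)^{-1}$ rather than a block-diagonal treatment. Once all pole cancellations and the reductions to \eqref{d-theta}, \eqref{d-h} and \eqref{d-kappa-rho} have been checked, the uniqueness of the torsion-free metric connection yields $\wt\omega'_A{}^B=\wt\omega_A{}^B$, which is the assertion of the proposition.
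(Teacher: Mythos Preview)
Your proposal is correct and follows exactly the paper's approach: the paper's proof says precisely that one checks the two structure equations $d\th^A=\th^B\wedge\wt\omega_B{}^A$ and $d\wt g_{A\ol B}=\wt\omega_A{}^C\wt g_{C\ol B}+\wt g_{A\ol C}\wt\omega_{\ol B}{}^{\ol C}$ using \eqref{d-h}, \eqref{d-kappa-rho}, and \eqref{d-theta}. Your more detailed block-by-block outline, the use of \eqref{theta-phi} and $\mu(1+\kappa\r)=1$ in the $\infty$-mixing entries, and your remark that the Graham--Lee coframe is non-holomorphic so the Chern-connection shortcut is unavailable, are all accurate elaborations of the computation the paper leaves to the reader.
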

\begin{proof}
One can check that $\wt\omega_A{}^B$ given above satisfies the structure equations
\[
d\theta^A=\theta^B\wedge \wt\omega_B{}^A, \quad d\,\wt g_{A\ol B}=\wt\omega_A{}^C\wt g_{C\ol B}+\wt g_{A\ol C}\wt\omega_{\ol B}{}^{\ol C}
\]
by using \eqref{d-h}, \eqref{d-kappa-rho}, and \eqref{d-theta}.
\end{proof}
In particular, at $\rho=0$ we have
\begin{equation}\label{omega-rho-0}
\begin{gathered}
\wt\omega_0{}^0=\frac{dz^0}{z^0}, \quad \wt\omega_0{}^\a=\theta^\a, \quad  
\ \wt\omega_\b{}^\a=\varphi_\b{}^\a+\Bigl(\frac{dz^0}{z^0}+i\kappa\vartheta\Bigr)\d_\b{}^\a, \\ 
\wt\omega_\infty{}^\a=-\kappa\theta^\a+iA^\a{}_{\ol\g}\theta^{\ol\g}-\kappa^\a\ol\pa\rho, \quad 
\wt\omega_0{}^\infty=\pa\rho, \quad  \wt\omega_\b{}^\infty=h_{\b\ol\g}\theta^{\ol\g}, \\
\wt\omega_\infty{}^\infty=\kappa\ol\pa\rho+\frac{dz^0}{z^0}, \quad
\wt\omega_\a{}^0=-\kappa_\a\pa\rho-iA_{\a\g}\theta^\g, \quad 
\wt\omega_\infty{}^0=\kappa^2\pa\rho-\pa\kappa.
\end{gathered}
\end{equation}

\subsection{Renormalized characteristic forms}\label{ren-chara}
Let $\Phi$ be an Ad-invariant homogeneous polynomial of degree $m$. It is uniquely expressed as a linear combination of terms of the form $T_{m_1}T_{m_2}\cdots T_{m_k}\ (m_1+\cdots+m_k=m)$, where 
\begin{equation}\label{def-T}
T_p(A):={\rm tr}(iA)^p.
\end{equation}
 We omit the factor $2\pi$ in the usual definition of the characteristic forms. By using this expression, we regard $\Phi$ as an Ad-invariant polynomial on $\mathfrak{gl}(r, \mathbb{C})$ for any $r$. Let $\Theta_i{}^j$ be the curvature form of the renormalized connection associated with a Fefferman defining function $\r$. Then, $\Phi(\Theta)$ defines a smooth closed $2m$-form on $\ol\Omega$, which we call the {\it renormalized characteristic form}. In \cite{BE2}, they define the renormalized characteristic form as $\Phi(W)$, using $W_i{}^j$ in place of $\Theta_i{}^j$. Near the boundary $M$, these forms coincide:

\begin{prop}\label{Theta-W}
{\rm (i)}We have $\Phi(\Theta)=\Phi(W)=\Phi(W')$ near $M$. \\
{\rm (ii)}When $m=n+1$, we have $\Phi(\Theta)=\Phi(W')$ on $\Omega$.
\end{prop}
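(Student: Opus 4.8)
The plan is to reduce statement (i) to the scalar identity ${\rm tr}(\Theta^p)={\rm tr}(W^p)$ near $M$ for every $p\ge 1$, and to prove statement (ii) by a bidegree count. For (i), recall first that $g_{i\ol j}=-\pa_i\pa_{\ol j}\log\r$ near $M$, so $K_i{}^j=K'_i{}^j$ and hence $W_i{}^j=\Psi_i{}^j+K_i{}^j=W'_i{}^j$ there; thus it remains to compare $\Phi(\Theta)$ with $\Phi(W)$. Since every Ad-invariant $\Phi$ is a polynomial in the $T_p(A)={\rm tr}(iA)^p$ of \eqref{def-T}, it is enough to prove ${\rm tr}(\Theta^p)={\rm tr}(W^p)$ near $M$. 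By \eqref{ambient-curvature-coordinate} one has $\Theta_i{}^j=W_i{}^j+E_i{}^j$ near $M$, where $E_i{}^j:=u_i\wedge dz^j$ is of type $(2,0)$ and $W_i{}^j$ is of type $(1,1)$.

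The computation is driven by two facts about the coframe row $r=(dz^i)$: it annihilates both $W$ and $E$, namely $(rW)^j=W_i{}^j{}_{k\ol l}\,dz^i\wedge dz^k\wedge dz^{\ol l}=0$ and $(rE)^j=dz^i\wedge u_i\wedge dz^j=0$. The first holds because $W_i{}^j{}_{k\ol l}$ is symmetric in $i\leftrightarrow k$ — the K\"ahler symmetry of the Chern curvature $\Psi$ of $g$, shared by $K_i{}^j=(g_{k\ol l}\d_i{}^j+g_{i\ol l}\d_k{}^j)dz^k\wedge dz^{\ol l}$ — while $dz^i\wedge dz^k$ is antisymmetric; the second holds since $dz^i\wedge u_i=-u_k\wedge dz^k=0$ by \eqref{u-z}. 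The same identity gives ${\rm tr}(E)=u_i\wedge dz^i=0$.

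I would then expand ${\rm tr}(\Theta^p)={\rm tr}((W+E)^p)$ into words in $W$ and $E$; as both are even-degree form-matrices, the trace is genuinely cyclic. The $E$-free word yields ${\rm tr}(W^p)$, and I claim each word containing at least one factor $E$ has zero trace. For $p=1$ this is ${\rm tr}(E)=0$. For $p\ge2$, cyclically rotate such a word to end in $E$, say ${\rm tr}(X_1\cdots X_{p-1}E)$ with $X_s\in\{W,E\}$; writing $E$ as the outer product of the column $(u_i)$ and the row $(dz^j)$ and using cyclicity, this equals, up to sign, $r\,(X_1\cdots X_{p-1})\,u$, which vanishes because $r(X_1\cdots X_{p-1})=(rX_1)X_2\cdots X_{p-1}=0$. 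Hence ${\rm tr}(\Theta^p)={\rm tr}(W^p)$, proving $\Phi(\Theta)=\Phi(W)=\Phi(W')$ near $M$.

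For (ii), in a holomorphic frame $\theta_i{}^j$ is of type $(1,0)$ (Proposition \ref{theta-in-z}), so $\Theta$ has no $(0,2)$-part and $\Theta=W'+\Theta^{(2,0)}$ on all of $\Omega$. When $m=n+1$, $\Phi(\Theta)$ is a $(2n+2)$-form on the $(n+1)$-dimensional $\Omega$; expanding the traces occurring in $\Phi$ in powers of $\Theta^{(2,0)}$, a summand with $K$ total factors of $\Theta^{(2,0)}$ has bidegree $(n+1+K,\,n+1-K)$, which vanishes identically once $K\ge1$ since its holomorphic degree exceeds $\dim_\C\Omega$. Only the $K=0$ term $\Phi(W')$ survives, so $\Phi(\Theta)=\Phi(W')$ on $\Omega$. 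I expect the main obstacle to be the vanishing of the $E$-containing trace terms in (i): it rests on the interplay of the K\"ahler symmetry of the renormalized curvature $W$ with the degeneracy $u_k\wedge dz^k=0$ of the correction term, and one must track form-degree signs carefully when invoking cyclicity of the trace for the outer-product factor $E$.
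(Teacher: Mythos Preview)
Your proof is correct and follows essentially the same route as the paper: the same decomposition $\Theta=W+E$ with $E_i{}^j=u_i\wedge dz^j$, the same two key identities $dz^i\wedge W_i{}^j=0$ (from the K\"ahler symmetry $W_i{}^j{}_{k\ol l}=W_k{}^j{}_{i\ol l}$) and $u_k\wedge dz^k=0$, and the same bidegree argument for (ii). The only cosmetic difference is that the paper packages the vanishing as $U_i{}^kU_k{}^j=0$ and $U_i{}^kW_k{}^j=0$ (so any word with $U$ not in the last slot dies, and the surviving ${\rm tr}(W^{p-1}U)$ dies by cyclicity), whereas you phrase the same thing via the row $r=(dz^i)$ annihilating both $W$ and $E$ together with the outer-product structure of $E$.
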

\begin{proof}
(i) First we recall that $W_i{}^j=W'_i{}^j$ near $M$. We take local coordinates $(z^1, z^2, \dots, z^{n+1})$ around a point on the boundary. By \eqref{ambient-curvature-coordinate}, we can decompose $\Theta_i{}^j$ as
\[
\Theta_i{}^j=W_i{}^j+U_i{}^j
\]
with
\[
U_i{}^j:=u_i\wedge dz^j, \quad u_i=\r^{-1}(\rho_{ik}dz^k-\rho_k\theta_i{}^k).
\]
By \eqref{u-z}, we have
\begin{equation}\label{tr-U}
U_i{}^i=0, \quad U_i{}^k \wedge U_k{}^j=0.
\end{equation}
Moreover, since $W$ can be written as
\[
W_i{}^j=W_i{}^j{}_{k\ol l}dz^k\wedge dz^{\ol l}, \quad W_i{}^j{}_{k\ol l}=W_k{}^j{}_{i\ol l}
\]
near $M$, we also have $dz^i\wedge W_i{}^j=0$ and hence
\begin{equation}\label{U-W}
U_i{}^k\wedge  W_k{}^j=0.
\end{equation}
It follows from \eqref{tr-U} and \eqref{U-W} that
\[
T_p(\Theta)=T_p(W)
\]
for any $p$. Thus we obtain $\Phi(\Theta)=\Phi(W)$ near $M$. 

(ii) Since $\th_i{}^j$ is a $(1, 0)$-form in a holomorphic frame, the curvature $\Theta_i{}^j$ does not contain the $(0, 2)$-component. This implies that when $\Phi$ is of degree $n+1$, we have $\Phi(\Theta)=\Phi(\Theta^{(1, 1)})=\Phi(W')$ on $\Omega$.
\end{proof}
By this proof, we have $\Phi(\Theta)=\Phi(W)=\Phi(W')$ globally on $\Omega$ if $g=-i\pa\ol\pa\log\rho$ globally. Thus, when $X=\C^{n+1}$ and $g$ is the exact Cheng--Yau metric, 
the renormalized characteristic forms depend only on $g$ and are biholomorphically invariant. 

By \eqref{ambient-curvature-coordinate}, we also have the identity
\begin{equation}\label{W-Omega}
\Phi(W)=\Phi(\wt\Omega),
\end{equation}
where $\wt\Omega$ is the curvature form of the ambient metric $\wt g$ and the left-hand side is implicitly pulled back via the projection $\wt X\to X$. Hence the renormalized characteristic forms are nothing other than the characteristic forms of the ambient metric. 

Though $\Phi(\Theta)$ depends on the choice of $\r$ off the boundary in general cases, it has an advantage that it is a closed form on whole $\Omega$ since $\Theta$ is the curvature form of a linear connection on $T^{1, 0}X$. We note that $T^{1, 0}X|_{M}$ is isomorphic to the direct sum of $T^{1, 0}M$ and a trivial complex line bundle. Thus, the cohomology class $[\Phi(\Theta)]$ on a collar neighborhood of $M$ is determined by $\Phi(T^{1, 0}M)$. When the degree of $\Phi$ is sufficiently large, this cohomology class always vanishes:
\begin{thm}[{\cite[Theorem 1.1]{T}}]\label{chern-cr}
Let $M$ be a closed strictly pseudoconvex CR manifold of dimension $2n+1\ge 5$. Let $\Phi$ be an invariant polynomial of degree $m$ with $2m\ge n+2$. Then $\Phi(T^{1, 0}M)=0$ in $H^{2m}(M; \mathbb{R})$.
\end{thm}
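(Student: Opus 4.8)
\emph{Proof proposal.} The plan is to realize the class by Chern--Weil theory applied to a natural $U(n)$-connection on $T^{1,0}M$ and then to show that the resulting closed form is exact, the whole difficulty being concentrated in the exactness. First I would represent $\Phi(T^{1,0}M)$ by $\Phi(F)$, where $F=(F_\a{}^\b)$ is the curvature form of the Tanaka--Webster connection for an arbitrary contact form $\th$. Since this connection preserves the Levi form $\bh_{\a\ol\b}$ it is a $U(n)$-connection on $T^{1,0}M$, so $\Phi(F)$ is a closed real $2m$-form representing $\Phi(T^{1,0}M)$ in $H^{2m}(M;\R)$. It therefore suffices to prove that $\Phi(F)$ is exact.

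The key structural input is the bidegree of $F$ with respect to the splitting $\C TM=T^{1,0}M\oplus T^{0,1}M\oplus\C T$. Reading off the Tanaka--Webster curvature formula of \S\ref{CR-geometry}, one has $F=F_H+\bth\wedge\Sigma$, where
\[
F_H=R_\a{}^\b{}_{\g\ol\mu}\th^\g\wedge\th^{\ol\mu}-iA_{\a\g}\th^\g\wedge\th^\b+i\bh_{\a\ol\g}A^\b{}_{\ol\mu}\th^{\ol\g}\wedge\th^{\ol\mu}
\]
is a horizontal curvature (a section of $\wedge^2 H^*\otimes\mathrm{End}$) and $\Sigma$ collects the torsion-divergence terms carrying the Reeb covector $\bth$. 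Because $\bth\wedge\bth=0$, expanding $\Phi$ produces at most one Reeb factor, so $\Phi(F)=\Phi(F_H)+\th\wedge\sigma$ with $\Phi(F_H)$ a horizontal $2m$-form and $\sigma$ a horizontal $(2m-1)$-form.

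Now I would invoke hard Lefschetz for the symplectic bundle $(H,d\th)$: on $\wedge^\bullet H^*$ the operator $L=d\th\wedge(\cdot)$ and its metric adjoint generate an $\mathfrak{sl}_2$-action, and $L\colon\wedge^{k-2}H^*\to\wedge^k H^*$ is \emph{surjective} exactly when $k\ge n+1$. The hypothesis $2m\ge n+2$ makes $L$ surjective onto horizontal forms of degree $2m$ and of degree $2m-1$, the odd case $2m-1$ being precisely what forces the sharp bound $2m\ge n+2$ rather than $2m\ge n+1$. Choosing smooth fiberwise primitives then yields a global identity $\Phi(F)=d\th\wedge\Lambda_0$. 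Writing $\Phi(F)=d(\th\wedge\Lambda_0)+\th\wedge d\Lambda_0$ reduces the problem, modulo an explicit exact form, to the closed and more $d\th$-divisible form $\th\wedge d\Lambda_0=\th\wedge d\th\wedge\Lambda_1$; the same surjectivity range lets me iterate, each step raising the power of $d\th$ dividing the representative while preserving closedness.

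\textbf{The hard part} will be to make this iteration terminate rigorously \emph{at zero} rather than merely improving divisibility, and to guarantee that the Lefschetz inversions are carried out globally and compatibly with $d$. The clean framework is Rumin's contact complex: in the upper range of degrees $k>n$ the de Rham cohomology is computed by forms lying in $\{\alpha:\th\wedge\alpha=0,\ d\th\wedge\alpha=0\}$, and I would argue that the $d\th$-divisibility gained above forces the Rumin representative of $[\Phi(F)]$ to vanish---equivalently, that a closed $2m$-form in the image of $L$ is exact once $2m\ge n+2$ and $\dim M\ge5$. That the bound is necessary is already visible for $n=1$, where the theorem is vacuous and the analogous degree-$2$ statement fails on contact $S^1\times S^2$. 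I expect the delicate point to be not the symplectic algebra but this final cohomological vanishing, where the precise interplay of a contact homotopy operator with the range $2m\ge n+2$ is essential.
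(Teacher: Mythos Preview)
The paper does not prove this theorem: it is quoted verbatim from Takeuchi \cite{T} and used as a black box (to obtain Corollary~\ref{cor-chern-cr} and in the proof of Theorem~\ref{invariance-I-prime}). There is therefore no argument in this paper to compare your proposal against.

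As for the proposal itself: the strategy is sound in outline and you have correctly isolated the relevant structure. Representing the class by the Tanaka--Webster curvature and using the decomposition $F=F_H+\bth\wedge\Sigma$ is the right starting point, and your Lefschetz observation---that $L=d\th\wedge(\cdot)$ is surjective onto horizontal forms of degree $\ge n+1$, so that both the horizontal $2m$-part and the horizontal $(2m{-}1)$-part of $\Phi(F)$ are $d\th$-divisible precisely when $2m\ge n+2$---is the genuine reason the bound is what it is. Passing to the Rumin complex in the upper range is also the natural way to finish.

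What you have written, however, is a sketch rather than a proof, and you say so yourself. After one step of your iteration one is left with a closed form $\th\wedge\beta$ with $\beta$ horizontal of degree $2m-1$; closedness forces $d\th\wedge\beta=0$, while Lefschetz gives $\beta=d\th\wedge\nu$. These two facts together say $\nu\in\ker L^2$, and continuing blindly one gets representatives with increasing $d\th$-divisibility but no obvious termination at zero. The missing ingredient is a precise cohomological statement---e.g.\ that in Rumin's model for $H^{2m}(M)$ with $2m>n$ the representative is built only from primitive horizontal data, which is absent here---and you have gestured at this but not supplied it. So the proposal identifies the right mechanism and the right difficulty; it does not yet close the gap.
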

\begin{cor}\label{cor-chern-cr}
Let $n\ge 2$. If $\Phi$ is an invariant polynomial of degree $n$, then $\Phi(\Theta)$ is exact near $M$.
\end{cor}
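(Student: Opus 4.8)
The plan is to compute the de Rham cohomology class of the closed form $\Phi(\Theta)$ on a collar neighborhood of $M$ and show that it vanishes, from which exactness follows at once. Since $\Theta$ is the curvature form of a genuine linear connection $\ol\nabla$ on $T^{1, 0}X$, Chern--Weil theory guarantees that $\Phi(\Theta)$ is a closed $2n$-form on all of $\ol\Omega$, and that on any open set its class agrees with the characteristic class $\Phi(T^{1, 0}X)$ of the holomorphic tangent bundle. First I would fix a collar neighborhood $U$ of $M$ in $\ol\Omega$ that deformation retracts onto $M$, so that restriction induces an isomorphism $H^{2n}(U; \R)\cong H^{2n}(M; \R)$; under this identification $[\Phi(\Theta)]|_U$ corresponds to $\Phi(T^{1, 0}X|_M)$.

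The key step is to reduce $\Phi(T^{1, 0}X|_M)$, a characteristic class of a rank-$(n+1)$ bundle, to the class $\Phi(T^{1, 0}M)$ of the rank-$n$ bundle $T^{1, 0}M$. Along $M$ there is a bundle splitting $T^{1, 0}X|_M\cong T^{1, 0}M\oplus\C$, where the second summand is the trivial complex line bundle spanned by the Graham--Lee vector field $\xi$ transverse to $T^{1, 0}M$. Because $\Phi$ is expressed as a polynomial in the power-sum traces $T_p(A)={\rm tr}(iA)^p$ of \eqref{def-T}, with each factor of positive degree, adjoining a trivial summand contributes a vanishing Chern root and leaves every $T_p\ (p\ge 1)$ unchanged. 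Hence $\Phi(T^{1, 0}X|_M)=\Phi(T^{1, 0}M)$ in $H^{2n}(M; \R)$, where on the right $\Phi$ is regarded as an invariant polynomial on $\mathfrak{gl}(n, \C)$.

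It then remains to verify the hypotheses of Theorem \ref{chern-cr}. With $m=\deg\Phi=n$ we have $2m=2n\ge n+2$ precisely because $n\ge 2$, so Takeuchi's theorem applies and yields $\Phi(T^{1, 0}M)=0$ in $H^{2n}(M; \R)$. Combining the previous steps gives $[\Phi(\Theta)]=0$ in $H^{2n}(U; \R)$, and since $\Phi(\Theta)$ is closed on $U$ this is exactly the assertion that $\Phi(\Theta)$ is exact near $M$. The result is thus essentially immediate once the machinery is in place; the only points demanding care are the rank reduction in the second step, namely checking that passing from the connection on $T^{1, 0}X$ to the intrinsic bundle $T^{1, 0}M$ does not alter the relevant characteristic class, together with the numerical check $2n\ge n+2$ that pins down why the hypothesis $n\ge 2$ is needed.
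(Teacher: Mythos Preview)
Your proposal is correct and follows essentially the same approach as the paper: the paper notes that $T^{1,0}X|_M\cong T^{1,0}M\oplus\C$, so $[\Phi(\Theta)]$ on a collar neighborhood is determined by $\Phi(T^{1,0}M)$, and then invokes Theorem~\ref{chern-cr} with $m=n$ (where $2n\ge n+2$ since $n\ge2$) to conclude this class vanishes. Your write-up is somewhat more detailed in justifying the rank reduction via the power-sum representation $T_p$, but the argument is the same.
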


When $g$ satisfies the Einstein equation $\Ric(g)+(n+2)g=0$ exactly, the renormalized characteristic forms can be rewritten in terms of those of $g$:
Let $\omega_g=ig_{k\ol l}\theta^k\wedge\theta^{\ol l}$ be the K\"ahler form of $g$. Then we have 
\begin{equation}\label{K-Psi}
K_i{}^l\Psi_l{}^j=\Psi_i{}^l K_l{}^j=-i\omega_g \Psi_i{}^j, \quad K_i{}^l K_l{}^j=-i\omega_g K_i{}^j,
\quad K_l{}^l=-(n+2)i\omega_g.
\end{equation}
Moreover, the Einstein equation implies $c_1=-(n+2)\omega_g$. By expanding 
$T_m(W)={\rm tr}\,\bigl(i(\Psi+K)\bigr)^m$, we obtain the following proposition:

\begin{prop}[{\cite[Theorem 2.1]{BE2}}]
If $g$ satisfies ${\rm Ric}_{i\ol j}+(n+2)g_{i\ol j}=0$, then we have $T_1(W)=0$ and $T_m(W)=\wt T_m(\Psi)$ for $m\ge2$, where 
\[
\wt T_m=\sum_{l=0}^{m-2}(-1)^l\binom{m}{l}\frac{1}{(n+2)^l}c_1^{\,l} T_{m-l}+(-1)^{m-1}\frac{m-1}{(n+2)^{m-1}}c_1^m.
\]
\end{prop}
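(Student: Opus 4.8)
The plan is to work with the matrix-valued $2$-forms $A:=i\Psi$ and $B:=iK$, so that $T_m(W)={\rm tr}\,(A+B)^m$, and to exploit the fact that, although $A$ and $B$ do not commute, the identities \eqref{K-Psi} force every product involving $B$ to collapse. Concretely, rewriting \eqref{K-Psi} in terms of $A$ and $B$ (and using that the scalar $2$-form $\omega_g$ wedge-commutes with everything) I would first record
\[
AB=BA=\omega_g A, \qquad B^2=\omega_g B, \qquad {\rm tr}\,B=(n+2)\omega_g=-c_1,
\]
the last equality combining $K_l{}^l=-(n+2)i\omega_g$ with the Einstein relation $c_1=-(n+2)\omega_g$.

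The key combinatorial step is a reduction lemma: any word $w$ in the letters $A,B$ containing $a$ copies of $A$ and $b$ copies of $B$ satisfies $w=\omega_g^{\,b}A^a$ when $a\ge1$, and $w=\omega_g^{\,b-1}B$ when $a=0$. I would prove this by induction on $b$ using the three relations above: if both letters occur there is an adjacent pair $AB$ or $BA$, which reduces to $\omega_g A$ and lowers $b$ by one without changing $a$; if only $B$ occurs, $B^2=\omega_g B$ collapses the block. Taking traces, a word with $b$ factors of $B$ contributes $\omega_g^{\,b}T_{m-b}(\Psi)$ when $b\le m-1$ and $(n+2)\omega_g^{\,m}$ when $b=m$; since there are $\binom{m}{b}$ such words, summing over all words gives
\[
T_m(W)=\sum_{b=0}^{m-1}\binom{m}{b}\omega_g^{\,b}T_{m-b}(\Psi)+(n+2)\omega_g^{\,m}.
\]

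To finish I would substitute $\omega_g=-c_1/(n+2)$, turning each $\omega_g^{\,b}$ into $(-1)^b(n+2)^{-b}c_1^{\,b}$. The terms with $0\le b\le m-2$ reproduce the sum in $\wt T_m$ verbatim. The remaining piece is the only real subtlety: the $b=m-1$ term is not of the form appearing in the stated sum, because $T_{m-b}(\Psi)=T_1(\Psi)=c_1$; isolating it and combining with the $b=m$ term yields
\[
\Bigl(m(-1)^{m-1}+(-1)^m\Bigr)\frac{c_1^{\,m}}{(n+2)^{m-1}}=(-1)^{m-1}\frac{m-1}{(n+2)^{m-1}}c_1^{\,m},
\]
which is exactly the closed term of $\wt T_m$. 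This proves $T_m(W)=\wt T_m(\Psi)$ for $m\ge2$; the case $m=1$ is immediate since $T_1(W)={\rm tr}\,A+{\rm tr}\,B=c_1-c_1=0$. The main obstacle is thus not conceptual but bookkeeping: one must recognize that the naive binomial expansion runs to $b=m-1$, that its top term secretly carries a factor $c_1$, and that it must be merged with the $b=m$ contribution to produce the coefficient $m-1$.
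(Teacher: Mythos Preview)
Your proof is correct and follows exactly the route the paper indicates: the paper simply says ``By expanding $T_m(W)={\rm tr}\,\bigl(i(\Psi+K)\bigr)^m$, we obtain the following proposition,'' relying on the identities \eqref{K-Psi}, and you have carried out precisely this expansion in full detail, including the merging of the $b=m-1$ and $b=m$ terms into the closed $c_1^m$ contribution. One minor remark: from \eqref{K-Psi} we actually have $AB=BA=\omega_g A$, so $A$ and $B$ \emph{do} commute here and the ordinary binomial theorem would already suffice; your word-reduction argument is more general than needed but of course still valid.
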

It follows from this proposition that for any invariant polynomial $\Phi$ of degree $m$, we have 
\[
\Phi(W)=\wt\Phi(\Psi)
\]
with an invariant polynomial $\wt\Phi$ of the form
\[
\wt\Phi=\Phi+\sum_{l=1}^{m}c_1^{\, l}\phi_l. 
\]
For example, we have
\[
\wt c_3=\frac{1}{3}\wt T_3=c_3-\frac{n}{n+2}c_1c_2+\frac{n(n+1)}{3(n+2)^2}c_1^3.
\]

When $\r$ is a Fefferman defining function and $g_{i\ol j}=-\pa_i\pa_{\ol j}\log\r$ holds only near the boundary, it is more convenient to use $\omega:=-i\pa\ol\pa\log\r$ and $W'=\Theta^{(1, 1)}=\Psi+K'$ instead of the K\"ahler form $\omega_g$ and the renormalized curvature $W$. By using similar equations as \eqref{K-Psi}, we can expand as
\begin{equation}\label{T-W-prime}
T_m(W')=\sum_{l=0}^{m-1}\binom{m}{l}\omega^l\wedge T_{m-l}(\Psi)+(n+2)\omega^m.
\end{equation}
Interchanging $\Psi$ and $W'$, we also have
\begin{equation}\label{T-Psi}
T_m(\Psi)=\sum_{l=0}^{m-1}(-1)^l\binom{m}{l}\omega^l\wedge T_{m-l}(W')+(-1)^m(n+2)\omega^m.
\end{equation}
By \eqref{T-W-prime}, \eqref{T-Psi}, and Proposition \ref{Theta-W} (ii), we obtain 
the expansion \eqref{chern-exp} for $c_{n+1}(\Theta)$. Also, the \eqref{T-Psi} implies that $\omega^{n+1-m}\wedge\Phi(\Psi)$ for an invariant polynomial $\Phi$ of degree  $m$ can be expressed in terms of $\omega$ and renormalized characteristic forms.


\subsection{Constructions of CR invariants}
We will now prove Theorem \ref{integral-inv}. We first prove that the  left-hand side of \eqref{finite-part} is independent of the choice of Fefferman defining function $\rho$. To this end, we rewrite it to the logarithmic part of another integral:

\begin{prop}\label{fp-lp}
For any Fefferman defining function $\rho$, we have 
\begin{equation}\label{fp-lp-formula}
{\rm fp} \int_{\rho>\e} \omega^{n+1-m}\wedge\Phi(\Theta)
=-{\rm lp}\int_{\rho>\e} i\pa\log\rho\wedge\ol\pa\log\rho\wedge\omega^{n-m}\wedge\Phi(W).
\end{equation}
\end{prop}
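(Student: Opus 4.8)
The plan is to convert the bulk integral on the left into a boundary integral, then match asymptotic coefficients. First I would reduce the right-hand side to $\Phi(\Theta)$ as well: since $\Phi(W)=\Phi(\Theta)$ near $M$ by Proposition \ref{Theta-W}(i), the form $i\pa\log\r\wedge\ol\pa\log\r\wedge\omega^{n-m}\wedge(\Phi(W)-\Phi(\Theta))$ vanishes near $M$ and is therefore a fixed smooth form supported in a compact part of the interior; for $\e$ small its integral over $\{\r>\e\}$ is a constant independent of $\e$, so it cannot affect the logarithmic part. Hence it suffices to work with $\Lambda:=\omega^{n-m}\wedge\Phi(\Theta)$ throughout, and to set $I(\e):=\int_{\r>\e}\omega^{n+1-m}\wedge\Phi(\Theta)$ and $J(\e):=\int_{\r>\e}i\pa\log\r\wedge\ol\pa\log\r\wedge\Lambda$.

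Two structural facts drive everything. From $\omega=-i\pa\ol\pa\log\r$ and $\ol\pa^2=0$ we have $d(\ol\pa\log\r)=\pa\ol\pa\log\r=i\omega$, so $\omega=-i\,d(\ol\pa\log\r)$ on $\{\r>0\}$; and $\Lambda$ is closed there, since $\omega$ is closed and $\Phi(\Theta)$ is a Chern--Weil form of the renormalized connection $\ol\nabla$, hence $d$-closed. Consequently the left-hand integrand is exact,
\[
\omega^{n+1-m}\wedge\Phi(\Theta)=\omega\wedge\Lambda=-i\,d\big(\ol\pa\log\r\wedge\Lambda\big),
\]
so Stokes' theorem expresses $I(\e)$ as an integral of $\eta:=\ol\pa\log\r\wedge\Lambda$ over the level set $\{\r=\e\}$; with a fixed orientation of the level sets one finds $I(\e)=i\int_{\r=\e}\iota^*\eta$.

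The heart of the argument is the differential relation $dJ/d\e=-I(\e)/\e$. To obtain it I would slice by the level sets of $\r$: using $\pa\r\wedge\ol\pa\r=d\r\wedge\ol\pa\r$ one writes $i\pa\log\r\wedge\ol\pa\log\r\wedge\Lambda=d\r\wedge\big(i\,\r^{-1}\eta\big)$, so the coarea formula gives $dJ/d\e=-\int_{\r=\e}i\,\e^{-1}\iota^*\eta$; comparing with the Stokes expression for $I$ yields $dJ/d\e=-I(\e)/\e$. Equivalently, one may integrate the pointwise identity $i\pa\log\r\wedge\ol\pa\log\r\wedge\Lambda=i\,d(\log\r\cdot\eta)+\log\r\,\omega\wedge\Lambda$, apply Stokes to the exact term to get $J(\e)=-\log\e\,I(\e)+K(\e)$ with $K(\e):=\int_{\r>\e}\log\r\,\omega^{n+1-m}\wedge\Phi(\Theta)$, and note the orientation-free slicing relation $dK/d\e=\log\e\,dI/d\e$, which again gives $dJ/d\e=-I(\e)/\e$.

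Finally I would integrate and read off coefficients. Since $J(\e)=\int_\e^{\e_0}t^{-1}I(t)\,dt+\mathrm{const}$, I invoke the assumed expansion $I(t)=\sum_{k,j}a_{k,j}t^{-k}(\log t)^j+o(1)$, so that ${\rm fp}\,I=a_{0,0}$. In the integrand $t^{-1}I(t)$ only the term $a_{0,0}\,t^{-1}$ integrates to $a_{0,0}\log t$ and hence contributes to the $\e^0(\log\e)^1$ coefficient of $J$; the term $a_{0,1}t^{-1}\log t$ produces a pure $(\log\e)^2$ contribution, the terms with $k\ge 1$ produce strictly singular powers of $\e$, and the $o(1)$ part integrates to a constant. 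Therefore ${\rm lp}\,J=-a_{0,0}=-{\rm fp}\,I$, which is \eqref{fp-lp-formula}. The hard part will be the bookkeeping rather than any deep idea: pinning down the level-set orientation so that the signs in the Stokes and coarea steps agree, and justifying the termwise manipulation of the asymptotic expansions (polyhomogeneity), which is exactly what transfers the finite part of $I$ into the logarithmic part of $J$.
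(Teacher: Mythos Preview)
Your proposal is correct and follows essentially the same route as the paper. The paper's proof uses precisely your second alternative: it derives the identity $J(\e)=-\log\e\cdot I(\e)+K(\e)$ with $K(\e)=\int_{\rho>\e}\log\rho\,\omega^{n+1-m}\wedge\Phi(\Theta)$ by the same Stokes manipulation, then reads off the $\log\e$ coefficient of $-\log\e\cdot I(\e)$ as $-{\rm fp}\,I$ and shows ${\rm lp}\,K=0$ directly by expanding the integrand as $\sum_k\rho^{-k}(\log\rho)a_k(x)\,d\rho\wedge\theta\wedge(d\theta)^n+O(1)$ and observing that no antiderivative $\int\rho^{-k}\log\rho\,d\rho$ produces a pure $\log\e$ term; your ODE $dJ/d\e=-I(\e)/\e$ followed by termwise integration is an equivalent repackaging of the same computation.
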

\begin{proof}
Since the logarithmic term is determined by the boundary behavior of the integrand, we may replace $\Phi(W)$ in the right-hand side of \eqref{fp-lp-formula} by $\Phi(\Theta)$. 
Using $d\Phi(\Theta)=0$, we have
\begin{align*}
&\quad \int_{\rho>\e}i \pa\log\rho\wedge\ol\pa\log\rho\wedge\omega^{n-m}\wedge\Phi( \Theta) \\
&=\int_{\rho>\e}i d\log\rho\wedge\ol\pa\log\rho\wedge\omega^{n-m}\wedge\Phi(\Theta) \\
&=\log\e\int_{\rho=\e} i\ol\pa\log\rho\wedge\omega^{n-m}\wedge\Phi(\Theta) 
 -\int_{\rho>\e} i\log\rho\ \pa\ol\pa\log\rho\wedge\omega^{n-m}\wedge\Phi(\Theta) \\
&=-\log\e\int_{\rho>\e} \omega^{n+1-m}\wedge\Phi(\Theta)+\int_{\rho>\e} \log\rho\ \omega^{n+1-m}\wedge\Phi(\Theta).
\end{align*}
The coefficient of $\log\e$ in the expansion of the first term is given by
\[
-{\rm fp} \int_{\rho>\e} \omega^{n+1-m}\wedge\Phi(\Theta).
\]
By using the flow generated by $N$, we identify a neighborhood of $M$ in $\ol\Omega$ with $M\times[0, \e_0)_\rho$. Then, we can expand as
\[
\log\rho\ \omega^{n+1-m}\wedge\Phi(\Theta)=\Bigl(\,\sum_{k=0}^{p}
\rho^{-k}(\log\rho)a_k(x)+O(1)\Bigr)d\rho\wedge\theta\wedge(d\theta)^n,
\]
where $\theta=(i/2)(\pa\r-\ol\pa\r)|_{TM}$ and $a_k(x)$ is a smooth function on $M$. Noting that 
\[
\int \rho^{-k}\log\rho\ d\rho=
\begin{cases}
\displaystyle\frac{1}{2}(\log\rho)^2 & (k=1) \\
\displaystyle\frac{1}{1-k}\rho^{1-k}\log\rho-\frac{1}{(1-k)^2}\rho^{1-k} & (k\neq1),
\end{cases}
\]
we find that 
\[
{\rm lp}\int_{\rho>\e} \log\rho\ \omega^{n+1-m}\wedge\Phi(\Theta)=0.
\]
Thus we obtain \eqref{fp-lp-formula}.
\end{proof}

The advantage of rewriting the finite part of an integration to the log part of another integration is that the latter does not depend on the choice of defining function to shrink the domain, as the following lemma shows:  
\begin{lem}[cf. {\cite[Theorem 3.1]{Gr3}}]\label{independence}
Let $\rho$ be a Fefferman defining function and $r$ an arbitrary defining function of $\Omega$. Then
\[
{\rm lp}\int_{r>\e} i\pa\log\rho\wedge\ol\pa\log\rho\wedge\omega^{n-m}\wedge\Phi(W)
\]
is independent of the choice of $r$.
\end{lem}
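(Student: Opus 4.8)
The plan is to show that replacing the Fefferman defining function $\r$ in the cutoff by an arbitrary $r$ changes the integral only by terms that carry no factor of $\log\e$. Write $\eta:=i\pa\log\r\wedge\ol\pa\log\r\wedge\omega^{n-m}\wedge\Phi(W)$, a top-degree form on $\Omega$ smooth in the interior. Since any two defining functions positive in $\Omega$ differ by a positive smooth factor, I would set $r=\r\,e^{h}$ with $h\in C^\infty(\ol\Omega)$ and compare $\int_{r>\e}\eta$ with $\int_{\r>\e}\eta$. For $\e$ small the two cutoff hypersurfaces $\{\r=\e\}$ and $\{r=\e\}$ both lie in a fixed collar $M\times[0,\delta)_\r$, identified via the flow of $N$, and the two regions coincide outside it; hence the difference of the two integrals reduces to the integral of $\eta$ over the thin region between the two hypersurfaces.

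The first substantive step is to record the boundary asymptotics of $\eta$. Using $i\pa\log\r\wedge\ol\pa\log\r=i\r^{-2}\pa\r\wedge\ol\pa\r$, the explicit expression $-\pa\ol\pa\log\r=\r^{-1}h_{\a\ol\b}\th^\a\wedge\th^{\ol\b}+\r^{-2}(1+\kappa\r)\pa\r\wedge\ol\pa\r$ for the Cheng--Yau K\"ahler form, and the smoothness of $\Phi(W)=\Phi(W')$ up to $M$ (Proposition \ref{Theta-W}), I obtain in the collar
\[
\eta=\Bigl(\sum_{k=1}^{n-m+2}a_k(x)\,\r^{-k}+\psi(x,\r)\Bigr)\,d\r\wedge\th\wedge(d\th)^n,
\]
with $a_k$ smooth on $M$ and $\psi$ smooth up to $M$. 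The two features I must verify are that the leading factor $\pa\r\wedge\ol\pa\r$ annihilates every further normal component of $\omega^{n-m}$, so that only the tangential part of $\omega$ contributes and the polar order is capped at $\r^{-(n-m+2)}$, and that the whole expansion is in \emph{integer} powers of $\r$ with no $\log\r$ terms.

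Granting this, the difference of the integrals becomes $\int_M\bigl(\int_{R(x,\e)}^{\e}\phi(x,\r)\,d\r\bigr)\,\th\wedge(d\th)^n$, where $\phi$ is the bracketed coefficient above and $\r=R(x,\e)$ is the graph of $\{r=\e\}$ in the collar; since $r$ is increasing in $\r$ near $M$, one has $\{r>\e\}=\{\r>R(x,\e)\}$ there, with $R(x,\e)=b(x)\e+O(\e^2)$ and $b=e^{-h|_M}>0$. I would then evaluate the inner integral term by term. The $k=1$ term gives $a_1(x)\bigl(\log\e-\log R(x,\e)\bigr)=-a_1(x)\log\bigl(R(x,\e)/\e\bigr)$, whose $\log\e$ cancels because $R(x,\e)/\e\to b(x)\neq0$, leaving only a finite contribution; each $k\ge2$ term integrates to a Laurent polynomial in $\e$, feeding the polar and finite parts but never producing a $\log\e$; and the $\psi$ term is $O(\e)$ since the interval $[R(x,\e),\e]$ has length $O(\e)$. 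Thus the difference has vanishing logarithmic part and the asserted independence follows.

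The main obstacle is the second step: establishing that $\eta$ has an asymptotic expansion in integer powers of $\r$ with no logarithmic terms, because the cancellation of $\log\e$ in the comparison depends entirely on this structure. This rests on $\r$ being a Fefferman defining function, so that $\omega$ and the transverse curvature $\kappa$ admit clean expansions, together with the smoothness of the renormalized curvature up to the boundary. A secondary but routine difficulty is the careful bookkeeping of orientations and of the moving graph $R(x,\e)$; I would handle it by the monotonicity of $r$ in $\r$ noted above, which identifies the difference region unambiguously as the signed collar $\{\r \text{ between } R(x,\e) \text{ and } \e\}$.
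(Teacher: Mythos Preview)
Your proposal is correct and follows essentially the same approach as the paper: identify a collar neighborhood of $M$, expand the integrand as a finite Laurent series in the normal variable (with no logarithmic terms, since all ingredients are smooth up to $M$ for the approximate Fefferman $\rho$), and observe that the integral over the thin region between the two cutoff hypersurfaces contributes no $\log\e$ term because the $k=1$ pole yields $-\log(R(x,\e)/\e)\to-\log b(x)$ while the $k\ge 2$ poles yield pure powers of $\e$. The paper's proof is terser---it uses $r$ rather than $\rho$ as the collar coordinate and simply asserts the absence of a logarithmic term---but the argument is the same.
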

\begin{proof}
Using a flow, we identify a neighborhood of $M$ in $\ol\Omega$ with $M\times[0, \e_0)_r$ and write the integrand as
\[
 i\pa\log\rho\wedge\ol\pa\log\rho\wedge\omega^{n-m}\wedge\Phi(W)
=\psi(x, r)dr\wedge\theta\wedge(d\theta)^n
\]
where $\psi$ admits the Laurent expansion in $r$. Let $\wh r$ be another defining function. Then $(x, \wh r)$ give local coordinates and we can write as 
\[
r=b(x, \wh r)\wh r
\]
with $b$ smooth up to the boundary. We have
\begin{align*}
&\quad \int_{\wh r\,>\e}\psi(x, r)dr\wedge\theta\wedge(d\theta)^n-
\int_{r>\e}\psi(x, r)dr\wedge\theta\wedge(d\theta)^n \\
&= \int_{b(x, \e)\e}^\e\int_M \psi(x, r)dr\wedge\theta\wedge(d\theta)^n
\end{align*}
and this has no logarithmic term. Thus we obtain the lemma.
\end{proof}

\begin{prop}\label{invariance}
Let $\rho, \wh \rho$ be Fefferman defining functions. Then we have
\begin{equation}\label{invariance-formula}
\begin{aligned}
& {\rm lp}\int_{\wh\rho\,>\e} i\pa\log\wh\rho\wedge\ol\pa\log\wh\rho\wedge\wh\omega^{n-m}\wedge\Phi(W)  \\
&\quad\quad={\rm lp}\int_{\rho>\e} i\pa\log\rho\wedge\ol\pa\log\rho\wedge\omega^{n-m}\wedge\Phi(W).
\end{aligned}
\end{equation}
\end{prop}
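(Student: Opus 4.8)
The plan is to reduce both sides of \eqref{invariance-formula} to a common cutoff, extract the difference $\Delta$ of the two integrands near $M$, and show ${\rm lp}\int_{\r>\e}\Delta=0$ by turning it into a single level-set integral that vanishes for cohomological reasons.

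First I would note that, since $\r$ and $\wh\r$ are Fefferman defining functions attached to pseudo-Einstein contact forms, Proposition \ref{p-E} gives $\log\wh\r-\log\r=\U$ near $M$ with $\pa\ol\pa\U=0$ on the pseudoconvex side; in particular $\wh\omega=\omega$ near $M$, while $\Phi(W)$ depends only on the Cheng--Yau metric and is therefore common to $\r$ and $\wh\r$. By Lemma \ref{independence} applied to the Fefferman function $\wh\r$, the cutoff $\{\wh\r>\e\}$ on the left of \eqref{invariance-formula} may be replaced by $\{\r>\e\}$. Setting $\Xi:=\omega^{n-m}\wedge\Phi(W)$, which is closed because $d\omega=0$ and $d\Phi(W)=0$, the difference of the two integrands over $\{\r>\e\}$ equals, near $M$,
\[
\Delta=i\bigl(\pa\log\r\wedge\ol\pa\U+\pa\U\wedge\ol\pa\log\r+\pa\U\wedge\ol\pa\U\bigr)\wedge\Xi .
\]

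Next I would exploit that $\pa\ol\pa\U=0$ near $M$, so that $\pa\U$ and $\ol\pa\U$ are both closed there. Combining this with $d\Xi=0$ and the vanishing of $\pa\log\r\wedge\pa\U\wedge\Xi$ and $\ol\pa\log\r\wedge\ol\pa\U\wedge\Xi$ (which hold for bidegree reasons on the $(n+1)$-dimensional $X$), a direct computation shows that $\Delta$ is exact near $M$:
\[
\Delta=d\Lambda,\qquad \Lambda=\Bigl(\tfrac{i}{2}\,\ol\pa(\U^2)+(\log\r)\,\g\Bigr)\wedge\Xi,\qquad \g:=i(\ol\pa\U-\pa\U).
\]
Stokes' theorem then reduces ${\rm lp}\int_{\r>\e}\Delta$ to a level-set integral. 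On $\{\r=\e\}$ the transverse part of $\omega$ pulls back to zero, so by Proposition \ref{exp-GL} the factor $\Xi$ restricts to $\e^{-(n-m)}$ times quantities that are smooth and $\log\e$-free in the relevant range; hence the $\tfrac{i}{2}\ol\pa(\U^2)$-term contributes no $\log\e$, and only the $(\log\r)\g$-term survives. Since $d\g=2i\pa\ol\pa\U=0$ and $d\Xi=0$, the integral $C:=\int_{\{\r=\e\}}\g\wedge\Xi$ is independent of $\e$, and one finds ${\rm lp}\int_{\r>\e}\Delta=C$.

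It then remains to prove $C=0$, which is where I expect the real content to lie: it rests on the cohomological triviality of the $\Xi$-factor on the level sets. If $m<n$, then $\omega|_{\{\r=\e\}}=d(-i\,\ol\pa\log\r)|_{\{\r=\e\}}$ is exact on $\{\r=\e\}$, so $\Xi|_{\{\r=\e\}}=(\omega|_{\{\r=\e\}})^{n-m}\wedge\Phi(W)$ is exact. If $m=n$, then $\Xi=\Phi(W)=\Phi(\Theta)$ near $M$, which is exact on $\{\r=\e\}$ by Corollary \ref{cor-chern-cr} when $n\ge2$ (and, when $n=1$, because the existence of a pseudo-Einstein contact form forces $c_1(T^{1,0}M)=0$ in $H^2(M;\R)$). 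In either case $\Xi|_{\{\r=\e\}}=d\Xi'$, and since $\g$ is closed, $C=\int_{\{\r=\e\}}\g\wedge d\Xi'=-\int_{\{\r=\e\}}d(\g\wedge\Xi')=0$, which yields \eqref{invariance-formula}. The main obstacle is precisely this vanishing; once the Chern-class input of Takeuchi (Theorem \ref{chern-cr}) and the triviality of $[\omega]$ on the level sets are in place, the no-$\log$ and $\e$-independence claims follow routinely from Proposition \ref{exp-GL}.
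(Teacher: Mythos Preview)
Your argument is correct and follows essentially the same route as the paper. Both proofs use Lemma~\ref{independence} to align the cutoffs, expand the difference using $\wh\rho=e^{\U}\rho$ with $\partial\bar\partial\U=0$ near $M$, integrate by parts to reduce to a level-set integral carrying the $\log\e$ coefficient, and then kill that integral by the exactness of $\omega$ (when $m<n$) or of $\Phi(W)=\Phi(\Theta)$ (when $m=n$, via Corollary~\ref{cor-chern-cr}). The only cosmetic difference is packaging: the paper treats the three terms $\pa\U\wedge\ol\pa\U$, $\pa\log\rho\wedge\ol\pa\U$, $\pa\U\wedge\ol\pa\log\rho$ separately, whereas you write one primitive $\Lambda$ for the whole difference $\Delta$. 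One small remark: the smoothness of $\Xi|_{\{\rho=\e\}}$ (no $\log\e$) only needs that $\Phi(W)=\Phi(\Theta)$ extends smoothly to $\overline{\Omega}$ and that $\omega|_{\{\rho=\e\}}=\e^{-1}d\vartheta|_{\{\rho=\e\}}$; Proposition~\ref{exp-GL} is not really the relevant reference there.
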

\begin{proof}
By Lemma \ref{independence}, we may replace the domain of integration in the left-hand side by $\{\rho>\e\}$. By Proposition \ref{pluriharmonic-ext}, we can write $\wh \rho=e^{\U}\rho$ with $\U\in C^\infty(\ol\Omega)$ such that $\pa\ol\pa\U=0$ near $M$. Thus, we have
\begin{align*}
&\quad \int_{\rho\,>\e} i\pa\log\wh\rho\wedge\ol\pa\log\wh\rho\wedge\wh\omega^{n-m}\wedge\Phi(W) \\
&=\int_{\rho>\e} i\pa\log\rho\wedge\ol\pa\log\rho\wedge\omega^{n-m}\wedge\Phi(W)  
+\int_{\rho>\e} i\pa\U\wedge\ol\pa\U\wedge\omega^{n-m}\wedge\Phi(W) \\
&\quad +2{\rm Re}\int_{\rho>\e} i\pa\log\rho\wedge\ol\pa\U\wedge\omega^{n-m}\wedge\Phi( W)+\int_{\rho>\e} ({\rm cpt\ supp}),
\end{align*}
where $({\rm cpt\ supp})$ denotes a compactly supported form on $\Omega$.
Since $\Phi(W)$ is closed near $M$, the second term in the right-hand side is equal to 
\[
\int_{\rho=\e}i \U\ol\pa\U\wedge\omega^{n-m}\wedge\Phi(W) 
+\int_{\rho>\e} ({\rm cpt\ supp}),
\] 
which gives no $\log\e$ term. On the other hand, noting that $\Phi(W)$ is an $(m, m)$-form on $\Omega$, we compute as
\begin{align*}
&\quad \int_{\rho>\e} i\pa\log\rho\wedge\ol\pa\U\wedge\omega^{n-m}\wedge\Phi(W)
\\
&=\int_{\rho>\e} id\log\rho\wedge\ol\pa\U\wedge\omega^{n-m}\wedge\Phi(W) \\
&=i\log\e\int_{\rho=\e} \ol\pa\U\wedge\omega^{n-m}\wedge\Phi(W)
+\int_{\rho>\e} ({\rm cpt\ supp}).
\end{align*}
Here, the integrand of the first term in the last expression is exact near $M$: When $m<n$, this follows from the exactness of $\omega$, and when $m=n$ Proposition \ref{Theta-W} (i) and Corollary \ref{cor-chern-cr} imply the exactness of $\Phi(W)$. Thus, this term does not have $\log\e$ term either. Hence we obtain \eqref{invariance-formula}.
\end{proof}

\begin{rem}\label{rem-kahler}
When $n\ge 2$, we can embed $M$ as the boundary of a strictly pseudoconvex domain $\Omega$ in a K\"ahler manifold $X$ ({\cite[Theorem 8.1]{Lem}}), and a CR pluriharmonic function on $M$ is extended to a global pluriharmonic function on $\Omega$ ({\cite[Theorem 7.1]{H}}). Then, the last part of the proof can also be given as follows:
\begin{align*}
\int_{\rho=\e} \ol\pa\U\wedge\omega^{n-m}\wedge\Phi(W)&=\int_{\rho=\e} \ol\pa\U\wedge\omega^{n-m}\wedge\Phi(\Theta) \\
&=\int_{\rho>\e} d\bigr(\ol\pa\U\wedge\omega^{n-m}\wedge\Phi(\Theta)\bigl) \\
&=0.
\end{align*} 
\end{rem}
\bigskip

It follows from Propositions \ref{fp-lp} and \ref{invariance} that  the  left-hand side of \eqref{finite-part} is independent of the choice of Fefferman defining function $\rho$.

Next we will show that the finite part is expressed in terms of pseudo-hermitian invariants as in \eqref{finite-part}. Since $\omega=d(\vartheta/\rho)$ and $d\Phi(\Theta)=0$, we have
\begin{equation}\label{stokes}
\int_{\rho>\e} \omega^{n+1-m}\wedge\Phi(\Theta)=
\int_{\rho=\e} \e^{-n-1+m}\vartheta\wedge(d\vartheta)^{n-m}\wedge\Phi(\Theta).
\end{equation}
If we identify a neighborhood of $M$ in $\ol\Omega$ with $M\times[0, \e_0)_\rho$ by the flow of $N$, we can write as
\[
\vartheta\wedge(d\vartheta)^{n-m}\wedge\Phi(\Theta)=F(x, \rho)\theta\wedge(d\theta)^n+d\rho\wedge\eta
\]
with a smooth function $F$ and a $2n$-form $\eta$. Then, by \eqref{stokes} we have
\[
{\rm fp}\int_{\rho>\e} \omega^{n+1-m}\wedge\Phi(\Theta)=\frac{1}{(n+1-m)!}\int_M
(\pa^{n+1-m}_\rho F)(x, 0)\theta\wedge(d\theta)^n.
\]
On the other hand, using $\mathcal{L}_N d\rho=0$, we have
\[
\mathcal{L}^{n+1-m}_N \bigl(\vartheta\wedge(d\vartheta)^{n-m}\wedge\Phi(\Theta)\bigr)
=(\pa^{n+1-m}_\rho F)(x, \rho)\theta\wedge(d\theta)^n+d\rho\wedge\mathcal{L}^{n+1-m}_N\eta.
\]
Hence we obtain 
\begin{equation}\label{int-lie}
{\rm fp}\int_{\rho>\e} \omega^{n+1-m}\wedge\Phi(\Theta)
=\frac{1}{(n+1-m)!}\int_M\mathcal{L}^{n+1-m}_N \bigl(\vartheta\wedge(d\vartheta)^{n-m}\wedge\Phi(\Theta)\bigr).
\end{equation}
Since $\mathcal{L}_N d\r=0$, we can ignore terms which contain $d\r$ in the computation of the integrand of the right-hand side. By $\mathcal{L}_N \vartheta =-\kappa\vartheta$ and $\mathcal{L}_N d\vartheta\equiv -\kappa 
d\vartheta$ mod $\vartheta$, we have
\[
\mathcal{L}_N^p \bigl(\vartheta\wedge (d\vartheta)^{n-m}\bigr)=\psi(\kappa, N\kappa, \dots, N^{p-1}\kappa)\, \vartheta\wedge (d\vartheta)^{n-m}
\]
with some polynomial $\psi$. On the other hand, by \eqref{theta-phi}, we may write as  
\[
\Phi(\Theta)\equiv G(h_{\a\ol\b}, R_{\a\ol\b\g\ol\mu}, A_{\a\b}, \kappa, \r, \nabla) (d\vartheta)^m
\quad {\rm mod}\ d\r, \vartheta,
\]
in which the covariant derivatives are all tangential. We note that when we apply $\mathcal{L}_N$ to the function $G$, we may replace it by $\nabla_N$. The commutators of $\nabla_N$ and tangential differentiations can be computed with the torsion and curvature tensors of the Graham--Lee connection, expressed by $h_{\a\ol\b}, R_{\a\ol\b\g\ol\mu}, A_{\a\b}, \kappa$ and their tangential derivatives.  Thus, we can express the integrand in the right-hand side of \eqref{int-lie} by
\begin{equation}\label{terms}
h_{\a\ol\b}, \quad \nabla_N^{p}R_{\a\ol\b\g\ol\mu}, \quad \nabla_N^{p} A_{\a\b}, \quad N^{p-1} \kappa \quad (p\le n+1)
\end{equation}
and their tangential derivatives. By Proposition \ref{exp-GL}, these can be written in terms of Tanaka--Webster curvature quantities. Thus we complete the proof of Theorem \ref{integral-inv}.
\bigskip

We remark that when $m=1$, i.e., $\Phi$ is a multiple of $T_1=c_1$, the corresponding invariant vanishes; in fact, from 
\[
T_1(W)=i(\Ric_{j\ol k}+(n+2)g_{j\ol k})\,\th^j\wedge\th^{\ol k}=\pa\ol\pa\, O(\r^{n+2})
\]
and 
\[
\pa\log\r\wedge\ol\pa\log\r\wedge\omega^{n-1}=\r^{-n-1}\pa\r\wedge\ol\pa\r\wedge (d\vartheta)^{n-1},
\]
we see that the right-hand side of \eqref{fp-lp-formula} vanishes. By a similar estimate, we can ignore the terms which contain $T_1$ when we express $\Phi$ as a linear combination of polynomials of the form $T_{m_1}\cdots T_{m_k}$. If $\Phi\neq 0$ modulo 
$T_1$, then our invariants turn out to be non-trivial; see Remark \ref{general-case}.

\subsection{The case of the exact Cheng--Yau metric}\label{exact}
We consider the case where $X=\C^{n+1}$ and $g$ is the exact K\"ahler--Einstein metric 
$\mathring g_{i\ol j}=-\pa_i\pa_{\ol j}\log u$ defined via the Monge--Amp\`ere solution of Cheng--Yau  \cite{ChY}.
In this case, $u$ has a logarithmic singularity of the form
\begin{equation}\label{u-expansion}
u\sim \rho\sum_{j=0}^\infty \eta_j(\rho^{n+2}\log\rho)^j, \quad \eta_j\in C^\infty(\ol\Omega),
\end{equation}
where $\r\in C^{\infty}(\ol\Omega)$ is an arbitrary smooth defining function (\cite{LM}).
 We take a Fefferman defining function as $\r$, in which case we have $\eta_0=1+O(\rho^{n+2})$. 
We will show that we may use this exact Cheng--Yau metric to obtain the same global CR invariants in Theorem \ref{integral-inv}.

For a function $f\in C^\infty(\Omega)$, we write
\[
f=O(\rho^m\log\rho)
\]
if $f$ admits an expansion of the form
\[
f\sim a_0 \rho^m +a_1\rho^m\log\rho +\sum_{j=m+1}^\infty\sum_{l=0}^{k_j} b_{j, l}\rho^{\,j}(\log\rho)^l
\]
with $a_0, a_1, b_{j, l}\in C^\infty(\ol\Omega)$. Then, by \eqref{u-expansion}, we have
\begin{equation}\label{u-rho}
u=\rho(1+O(\rho^{n+2}\log\rho))=\rho+O(\rho^{n+3}\log\rho).
\end{equation}
Let $\mathring\psi_i{}^j, \mathring\Omega_i{}^j$ be the connection and the curvature forms of the Levi-Civita connection form of $\mathring g$. As before we define the renormalized connection by
\[
\mathring\theta_i{}^j:=\mathring\psi_i{}^j+\frac{1}{u}(u_k\d_i{}^j+u_i\d_k{}^j)\theta^k.
\]
We also define 
\[
\mathring W_i{}^j:=\mathring\Omega_i{}^j+(\mathring g_{k\ol l}\d_i{}^j+\mathring g_{i\ol l}\d_k{}^j)\theta^k\wedge\theta^{\ol l},
\]
which coincides with the $(1, 1)$-part of the curvature form $\mathring\Theta_i{}^j$ of $\mathring\theta_i{}^j$. We will examine the boundary regularity of these forms by using the formula from Proposition \ref{theta-in-z}
\[
\mathring\theta_i{}^j=\mathring g^{j\ol l}\Bigl(-\frac{u_{ki\ol l}}{u}+\frac{u_{\ol l}u_{ki}}{u^2}\Bigr)dz^k
\]
in a local coordinate system $(z^1, \dots, z^{n+1})$.
We set $g_{i\ol j}:=-\pa_i \pa_{\ol j}\log\rho$ so that 
\[
\mathring g=g+\pa\ol\pa\, O(\rho^{n+2}\log\rho).
\]
In a Graham--Lee coframe $\{\theta^\a, \theta^\infty=\pa\rho\}$, we have
\[
g=\r^{-1}h_{\a\ol\b}\theta^\a\wedge\theta^{\ol \b}+\r^{-2}(1+\kappa\rho)\pa\rho\wedge\ol\pa\rho
\]
and thus
\begin{gather*}
\mathring g_{\a\ol\b}=\r^{-1}h_{\a\ol\b}+O(\rho^{n+2}\log\rho),  \quad
\mathring g_{\a\ol\infty}=O(\rho^{n+1}\log\rho), \\
\mathring g_{\infty\ol\infty}=\r^{-2}(1+\kappa\rho)+O(\rho^{n}\log\rho).
\end{gather*}
Hence we have
\[
\mathring g=(I+A)g
\]
with a matrix $A=O(\rho^{n+1}\log\rho)$. Note that this also holds in the coframe 
$\{dz^j\}$. Tanking the inverse we obtain
\[
\mathring g^{-1}=g^{-1}(I+A'), \quad A'=O(\rho^{n+1}\log\rho),
\]
which gives
\begin{align*}
\mathring g^{j\ol l}&=g^{j\ol l}+O(\rho^{n+2}\log\rho) 
=O(\rho)+O(\rho^{n+2}\log\rho), \\
\mathring g^{j\ol l}\rho_{\ol l}&=g^{j\ol l}\rho_{\ol l}+O(\rho^{n+3}\log\rho) 
=O(\rho^2)+O(\rho^{n+3}\log\rho).
\end{align*}
It follows that 
\begin{align*}
\mathring g^{j\ol l}u_{ki\ol l}
&=\mathring g^{j\ol l}\bigl(\rho_{ki}+O(\rho^{n+1}\log\rho)\bigr)_{\ol l} 
=g^{j\ol l}\rho_{ki\ol l}+O(\rho^{n+2}\log\rho), \\
\mathring g^{j\ol l}u_{\ol l}&=\mathring g^{j\ol l}(\rho_{\ol l}+O(\rho^{n+2}\log\rho))
=g^{j\ol l}\rho_{\ol l}+O(\rho^{n+3}\log\rho).
\end{align*}
Therefore we get
\begin{align*}
\mathring \theta_i{}^j&=\mathring g^{j\ol l}\Bigl(-\frac{u_{ki\ol l}}{u}+\frac{u_{\ol l}u_{ki}}{u^2}\Bigr)dz^k \\
&=\frac{-g^{j\ol l}\rho_{ki\ol l}+O(\rho^{n+2}\log\rho)}{\rho(1+O(\rho^{n+2}\log\rho))}
+\frac{(g^{j\ol l}\rho_{\ol l}+O(\rho^{n+3}\log\rho))(\rho_{ki}+O(\rho^{n+1}\log\rho))}{\rho^2(1+O(\rho^{n+2}\log\rho))} \\
&=\theta_i{}^j+O(\rho^{n+1}\log\rho)
\end{align*}
and 
\begin{align*}
\mathring\Theta_i{}^j=\Theta_i{}^j+O(\rho^{n}\log\rho).
\end{align*}

Now we will prove that in Theorem \ref{integral-inv} we can replace the metric $g$ by $\mathring g$:

\begin{prop}
Let $\Phi$ be an invariant polynomial of degree $m\ (2\le m\le n)$. Then we have
\[
{\rm fp}\int_{u>\e}\omega_{\mathring g}^{n+1-m}\wedge \Phi(\mathring\Theta)
={\rm fp}\int_{\rho>\e}\omega^{n+1-m}\wedge \Phi(\Theta).
\]
\end{prop}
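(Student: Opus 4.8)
The plan is to mirror the route used for Theorem~\ref{integral-inv}: convert each finite part into a logarithmic part, move both integrals onto the common domain $\{\r>\e\}$, and then control the difference of the integrands by the vanishing orders already established. Observe first that $\omega_{\mathring g}=-i\pa\ol\pa\log u=d\big(\vartheta_u/u\big)$ with $\vartheta_u:=\tfrac{i}{2}(\pa u-\ol\pa u)$, and that $\Phi(\mathring\Theta)$ is closed on $\Omega$ since $\mathring\theta$ is a genuine linear connection on $T^{1,0}\C^{n+1}$. Consequently the proofs of Proposition~\ref{fp-lp} and Lemma~\ref{independence} apply verbatim to the pair $(\mathring g,u)$, and combining them (using Lemma~\ref{independence} to replace the shrinking function $u$ by $\r$) yields
\[
{\rm fp}\int_{u>\e}\omega_{\mathring g}^{n+1-m}\wedge\Phi(\mathring\Theta)
=-{\rm lp}\int_{\r>\e} i\pa\log u\wedge\ol\pa\log u\wedge\omega_{\mathring g}^{n-m}\wedge\Phi(\mathring\Theta).
\]
Applying Proposition~\ref{fp-lp} to the right-hand side of the asserted identity as well, it suffices to prove ${\rm lp}\int_{\r>\e}(\mathring J-J)=0$, where $\mathring J$ and $J$ denote the integrands $i\pa\log u\wedge\ol\pa\log u\wedge\omega_{\mathring g}^{n-m}\wedge\Phi(\mathring\Theta)$ and $i\pa\log\r\wedge\ol\pa\log\r\wedge\omega^{n-m}\wedge\Phi(\Theta)$ respectively.

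Next I would telescope $\mathring J-J$ into three groups of correction terms according to the substitutions $\log\r\rightsquigarrow\log u$, $\omega\rightsquigarrow\omega_{\mathring g}$, and $\Phi(\Theta)\rightsquigarrow\Phi(\mathring\Theta)$, and estimate each using $u=\r+O(\r^{n+3}\log\r)$, the fact that $\omega_{\mathring g}-\omega$ has tangential components of order $O(\r^{n+2}\log\r)$ and normal components of order $O(\r^{n}\log\r)$, and $\mathring\Theta=\Theta+O(\r^{n}\log\r)$. The crucial structural point is that $i\pa\log\r\wedge\ol\pa\log\r=i\r^{-2}\pa\r\wedge\ol\pa\r$ already saturates the single normal complex direction, so in every surviving top-degree term the remaining factors are tangential; hence $\omega^{n-m}$ contributes only its tangential part of order $\r^{-(n-m)}$, making the leading singularity of $J$ equal to $\r^{-(n-m+2)}$. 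The weakest of the three corrections is the curvature one: $\Phi(\mathring\Theta)-\Phi(\Theta)=O(\r^{n}\log\r)$ contributes a term of order $\r^{-(n-m+2)}\cdot\r^{n}\log\r=\r^{m-2}\log\r$, whereas the $\omega$- and $\log u$-corrections turn out to be of the higher orders $\r^{m+1}\log\r$ and $\r^{m}\log\r$ respectively, their normal-normal parts being annihilated by $\pa\r\wedge\ol\pa\r$. Thus every term of $\mathring J-J$ is, as a coefficient of $d\r\wedge\th\wedge(d\th)^n$, of order $O(\r^{m-2}(\log\r)^l)$ for some $l$.

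Finally I would read off the log part. For $m\ge 2$ a term $c(x)\r^{a}(\log\r)^l$ with $a\ge m-2\ge 0$ is integrable near $M$, and $\int_{\r>\e}$ of it expands as a constant plus $O(\e^{a+1}(\log\e)^l)$; since $a+1\ge 1$ there is no $\e^0\log\e$ contribution, so its log part vanishes. Summing over the finitely many such terms gives ${\rm lp}\int_{\r>\e}(\mathring J-J)=0$, which is the desired equality. I expect the main obstacle to be the bookkeeping of the anisotropic (normal versus tangential) vanishing orders in the second step, together with the conceptual point that the argument must be run through the \emph{log} part rather than the finite part: the difference $\mathring J-J$ is integrable for $m\ge 2$, but its interior integral $\int_\Omega(\mathring J-J)$ need not vanish, so only the log part --- which is insensitive to this convergent bulk contribution --- can be forced to vanish by order counting alone.
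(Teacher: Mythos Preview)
Your approach is correct but takes a genuinely different route from the paper. The paper works directly with the finite parts. First it replaces the domain $\{u>\e\}$ by $\{\r>\e\}$ via the elementary estimate $\int_{\phi_\e(x)}^{\e} f\,d\r=O(\e^{m+1}\log\e)$, where $u(x,\phi_\e(x))=\e$ so that $\phi_\e(x)=\e(1+O(\e^{n+2}\log\e))$. Then, rather than order-counting the difference of integrands, it observes that this difference is \emph{exact}: since $\Phi(\mathring\Theta)-\Phi(\Theta)=O(\r^n\log\r)$ is a \emph{closed} form and $\omega^{n+1-m}$ has the primitive $\r^{-1}\vartheta\wedge\omega^{n-m}=O(\r^{-(n+1-m)})$, while $\omega_{\mathring g}^{n+1-m}-\omega^{n+1-m}=d\,O(\r^{m+1}\log\r)$, one obtains
\[
\omega_{\mathring g}^{n+1-m}\wedge\Phi(\mathring\Theta)-\omega^{n+1-m}\wedge\Phi(\Theta)=d\,O(\r^{m-1}\log\r),
\]
and Stokes gives a boundary term $O(\e^{m-1}\log\e)$ with no finite part for $m\ge2$. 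This completely sidesteps the bulk-integral obstruction you correctly identified: the difference is not merely integrable but exact, so there is no interior contribution to worry about. Your detour through the log part via Proposition~\ref{fp-lp} and Lemma~\ref{independence} reaches the same conclusion by pure order counting, at the cost of more bookkeeping (including the anisotropic normal/tangential estimates you sketch) and the mild observation that those two results extend to the polyhomogeneous setting of $u$ --- not quite ``verbatim'', since higher powers of $\log\r$ now appear in the expansions, though the conclusions survive unchanged. The paper's route is shorter because it exploits the exactness; yours is more systematic and would succeed even without that structural insight.
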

\begin{proof}
First we prove that we may replace $u$ in the left-hand side by $\rho$. We identify a neighborhood of $M$ in $\ol\Omega$ with $M\times [0, \e_0)_\rho$ and write
\[
\omega_{\mathring g}^{n+1-m}\wedge \Phi(\mathring\Theta)=f(x, \rho)d\rho\wedge\theta\wedge(d\theta)^n.
\]
Here the function $f$ is $O(\rho^{-(n+1-m)-1})$ and may involve some logarithmic terms. Let $\phi_\e(x)$ be a function on $M$ defined by the equation $u(x, \phi_\e(x))=\e$ for small $\e>0$. Then, $\phi_\e(x)=\e(1+O(\e^{n+2}\log\e))$, and we have
\begin{align*}
&\int_{u>\e}\omega_{\mathring g}^{n+1-m}\wedge \Phi(\mathring\Theta)-\int_{\rho>\e}\omega_{\mathring g}^{n+1-m}\wedge \Phi(\mathring\Theta) \\
&\quad =
\int_M \int_{\phi_\e(x)}^\e f(x, \rho)d\rho\wedge\theta\wedge(d\theta)^n.
\end{align*}
Since
\begin{align*}
\int_{\phi_\e(x)}^\e f(x, \rho)d\rho&=O([\rho^{-(n+1-m)}]_{\e(1+O(\e^{n+2}\log\e))}^\e) \\
&=\e^{-(n+1-m)}\cdot O(\e^{n+2}\log\e) \\
&=O(\e^{m+1}\log\e),
\end{align*}
we have
\[
{\rm fp}\int_{u>\e}\omega_{\mathring g}^{n+1-m}\wedge \Phi(\mathring\Theta)={\rm fp}\int_{\rho>\e}\omega_{\mathring g}^{n+1-m}\wedge \Phi(\mathring\Theta)
\]
Next, we compare the integrands. From $\mathring\Theta_i{}^j=\Theta_i{}^j+O(\rho^{n}\log\rho)$, we have
\[
\Phi(\mathring\Theta)=\Phi(\Theta)+O(\rho^{n}\log\rho),
\]
and the term $O(\rho^{n}\log\rho)$ is closed. From $\omega_{\mathring g}=\omega+\pa\ol\pa\, O(\rho^{n+2}\log\rho),
$ we have
\[
\omega_{\mathring g}^{n+1-m}=\omega^{n+1-m}+d\, O(\rho^{m+1}\log\rho).
\]
Consequently, we obtain
\[
\omega_{\mathring g}^{n+1-m}\wedge \Phi(\mathring\Theta)=
\omega^{n+1-m}\wedge \Phi(\Theta)+d\,O(\rho^{m-1}\log\rho)
\]
and complete the proof.
\end{proof}


\section{The $\mathcal{I}'_\Phi$-curvatures}\label{I-prime-curv}
The CR invariance of the total $Q'$-curvature is explained by its transformation formula under changes of pseudo-Einstein contact forms (\cite{CY, H, Mar2}). In this section, we construct a Tanaka--Webster curvature quantity $\mathcal{I}'_\Phi$ for each invariant polynomial $\Phi$ of degree $n$ which integrates to the global CR invariant given by Theorem \ref{integral-inv}.  When $n=1$, the invariant is trivial, so we assume $n\ge 2$.
We denote the weighted contact form $\bth$ and the weighted Levi form $\bh_{\a\ol\b}$ simply by $\th$ and $h_{\a\ol\b}$. 

\subsection{The Lefschetz decomposition of differential forms}
Here we summarize the formulas for differential forms that we use in the subsequent subsections.

Let $V$ be an $n$-dimensional complex vector space, and let 
\[
\omega=i h_{\a\ol \b}\theta^{\a}\wedge\theta^{\ol \b}\ \in\wedge^{1, 1} V^*
\]
be a real $(1, 1)$-form on $V$, where $h_{\a\ol \b}$ is a non-degenerate hermitian form. We define the operator $L$ by $L\varphi:=\omega\wedge\varphi$. Then, for
\begin{equation}\label{phi}
\varphi=\frac{1}{p!q!}\varphi_{\a_1\cdots \a_p \ol \b_1\cdots \ol \b_q}\theta^{\a_1}\wedge\cdots
\wedge\theta^{\a_p}\wedge\theta^{\ol \b_1}\wedge\cdots\wedge\theta^{\ol \b_q}\ \in\wedge^{p, q}V^*,
\end{equation}
we have
\[
L\varphi=\frac{1}{(p+1)!(q+1)!}(L\varphi)_{\a \a_1\cdots \a_p \ol \b\ol \b_1\cdots \ol \b_q}
\theta^{\a}\wedge\theta^{\a_1}\wedge\cdots
\wedge\theta^{\a_p}\wedge\theta^{\ol \b}\wedge\theta^{\ol \b_1}\wedge\cdots\wedge\theta^{\ol \b_q},
\]
where
\[
(L\varphi)_{\a \a_1\cdots \a_p \ol \b\ol \b_1\cdots \ol \b_q}=
(-1)^p i(p+1)(q+1)h_{[\a\ol \b}\varphi_{\a_1\cdots \a_p \ol \b_1\cdots \ol \b_q]}.
\]
Let $\Lambda: \wedge^{p, q}\to \wedge^{p-1, q-1}$ be the adjoint operator of $L$ with respect to the hermitian inner product
\[
\langle\varphi, \psi\rangle_h:=\frac{1}{p!q!}\varphi_{\a_1\cdots \a_p \ol \b_1\cdots \ol \b_q}
{\ol{\psi}}^{\,\a_1\cdots \a_p \ol \b_1\cdots \ol \b_q}.
\]
For $\varphi$ given by \eqref{phi}, we have
\[
\Lambda\varphi
=\frac{-i(-1)^{p-1}}{(p-1)!(q-1)!}h^{\a\ol \b}\varphi_{\a \a_2\cdots \a_p \ol \b\ol \b_2\cdots \ol \b_q}
\theta^{\a_2}\wedge\cdots
\wedge\theta^{\a_p}\wedge\theta^{\ol \b_2}\wedge\cdots\wedge\theta^{\ol \b_q}.
\]
We note that if we write $\varphi\in\wedge^{n, n}V^*$ as
\[
\frac{1}{n!n!}\varphi_{\a_1\ol \b_1\cdots \a_n \ol b_n}\theta^{\a_1}\wedge\theta^{\ol \b_1}\wedge\cdots\wedge\theta^{\a_n}\wedge\theta^{\ol \b_n},
\]
then we have
\[
\Lambda^n \varphi=(-i)^n h^{\a_1\ol \b_1}\cdots h^{\a_n\ol \b_n}\varphi_{\a_1\ol \b_1\cdots \a_n \ol \b_n}.
\]
Similarly, for
\[
\varphi=\frac{1}{n!(n-1)!}\varphi_{\a \a_1\ol \b_1\cdots \a_{n-1} \ol \b_{n-1}}\theta^{\a}\wedge\theta^{\a_1}\wedge\theta^{\ol \b_1}\wedge\cdots\wedge\theta^{\a_{n-1}}\wedge\theta^{\ol \b_{n-1}}\in \wedge^{n, n-1}V^*,
\]
we have
\[
\Lambda^{n-1}\varphi=(-i)^{n-1} h^{\a_1\ol \b_1}\cdots h^{\a_{n-1}\ol \b_{n-1}}\varphi_{\a \a_1\ol \b_1\cdots \a_{n-1} \ol \b_{n-1}}\theta^\a.
\]

By a simple induction, we have the following proposition:
\begin{prop}
Let $\varphi\in\wedge^{p, q}V^*$. Then we have
\begin{align}
\label{Lambda-L-m}
[\Lambda, L^m]\varphi&=m(n-p-q-m+1)L^{m-1}\varphi, \\
\label{Lambda-m-L}
[\Lambda^m, L]\varphi&=m(n-p-q+m-1)\Lambda^{m-1}\varphi.
\end{align}
If $\varphi$ satisfies $\Lambda\varphi=0$, then we have
\begin{equation}\label{Lambda-k-L-m}
\Lambda^k L^m\varphi=\frac{m!}{(m-k)!}(n-p-q-m+1)\cdots(n-p-q-m+k)L^{m-k}\varphi
\end{equation}
for $k\le m$. 
\end{prop}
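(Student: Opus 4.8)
The plan is to reduce all three identities to the single fundamental commutation relation
\[
[\Lambda, L]\varphi=(n-p-q)\varphi, \qquad \varphi\in\wedge^{p, q}V^*,
\]
which is exactly the case $m=1$ of \eqref{Lambda-L-m}. Once this is available, everything else is a formal induction. First I would prove the base relation by a direct computation in a unitary coframe, i.e.\ by choosing $\{\theta^\a\}$ with $h_{\a\ol\b}=\d_{\a\ol\b}$; then the explicit component formulas for $L\varphi$ and $\Lambda\varphi$ recorded above reduce the computation to counting, for each basis monomial of $\wedge^{p, q}V^*$, how many of the indices $\a, \ol\b$ already occur, the difference $\Lambda L-L\Lambda$ producing precisely the factor $n-p-q$. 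As a sanity check, on $\varphi=1\in\wedge^{0,0}V^*$ one has $\Lambda\omega=-ih^{\a\ol\b}(ih_{\a\ol\b})=n$, which is $(n-p-q)\varphi$ with $p=q=0$. This is the only genuinely computational step, and I expect it to be the main point of care.

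For \eqref{Lambda-L-m} I would induct on $m$ using the operator identity
\[
[\Lambda, L^m]=L\,[\Lambda, L^{m-1}]+[\Lambda, L]\,L^{m-1},
\]
which is immediate from $\Lambda L=L\Lambda+[\Lambda, L]$. Applied to $\varphi\in\wedge^{p, q}V^*$, one uses that $L^{m-1}\varphi$ has bidegree $(p+m-1, q+m-1)$, so the base relation gives $[\Lambda, L]L^{m-1}\varphi=(n-p-q-2m+2)L^{m-1}\varphi$, while the inductive hypothesis gives $L[\Lambda, L^{m-1}]\varphi=(m-1)(n-p-q-m+2)L^{m-1}\varphi$; adding the two coefficients collapses to $m(n-p-q-m+1)$, as required.

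The identity \eqref{Lambda-m-L} is handled symmetrically, inducting on $m$ via
\[
[\Lambda^m, L]=\Lambda\,[\Lambda^{m-1}, L]+[\Lambda, L]\,\Lambda^{m-1}.
\]
This time $\Lambda^{m-1}\varphi$ has bidegree $(p-m+1, q-m+1)$, so the base relation contributes $(n-p-q+2m-2)\Lambda^{m-1}\varphi$ and the inductive term contributes $(m-1)(n-p-q+m-2)\Lambda^{m-1}\varphi$, and the coefficients sum to $m(n-p-q+m-1)$. Finally, for \eqref{Lambda-k-L-m} I would induct on $k$ under the hypothesis $\Lambda\varphi=0$, peeling off one factor of $\Lambda$ at each step: since $\Lambda\varphi=0$, formula \eqref{Lambda-L-m} gives $\Lambda L^{m-k+1}\varphi=[\Lambda, L^{m-k+1}]\varphi=(m-k+1)(n-p-q-m+k)L^{m-k}\varphi$, and multiplying the successive factors telescopes into $\tfrac{m!}{(m-k)!}(n-p-q-m+1)\cdots(n-p-q-m+k)$. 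Beyond the base commutator the inductions are purely formal; the only thing to watch is that the bidegree shifts $L\colon(p, q)\mapsto(p+1, q+1)$ and $\Lambda\colon(p, q)\mapsto(p-1, q-1)$ are applied to the correct intermediate form at each stage.
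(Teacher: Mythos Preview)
Your proposal is correct and is precisely the ``simple induction'' the paper invokes without further detail: the paper states only that the proposition follows by induction and then remarks that $\{L,H,\Lambda\}$ is an $\mathfrak{sl}(2)$-triple with $[\Lambda,L]=H$, which is exactly your base relation. Your explicit bookkeeping of the bidegree shifts and the resulting coefficient arithmetic checks out in each of the three inductions.
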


Thus, if we set $H\varphi:=(n-p-q)\varphi$, then $\{L, H, \Lambda\}$ forms an $\mathfrak{sl}(2)$-triple:
\[
[H, L]=-2L, \quad [H, \Lambda]=2\Lambda, \quad [\Lambda, L]=H.
\]
Consequently, $\wedge V^*$ admits the irreducible decomposition with respect to the action of $\mathfrak{sl}(2, \mathbb{C})$, which is called the {\it Lefschetz decomposition}. Using this decomposition and equations \eqref{Lambda-L-m}, \eqref{Lambda-m-L}, we can derive the following formulas:
\begin{prop}\label{diff-form}
{\rm (i)} For $\varphi\in\wedge^{n, n} V^*$, we have
\[
\varphi=\frac{1}{(n!)^2}L^n\Lambda^n\varphi.
\]
{\rm (ii)} For $\varphi\in\wedge^{n, n-1}V^*$, we have
\[
\varphi=\frac{1}{\bigl((n-1)!\bigr)^2}L^{n-1}\Lambda^{n-1}\varphi.
\]
{\rm (iii)} For $\varphi\in\wedge^{n-1, n-1}V^*$, we have
\[
L\varphi=\frac{1}{n!(n-1)!}L^n\Lambda^{n-1}\varphi.
\]
{\rm (iv)} For $\varphi\in\wedge^{m, m}V^*\ (0\le m\le n-1)$, we have
\[
\Lambda^{n-2}L^{n-m-1}\varphi
=\frac{(n-2)!(n-m-1)!}{m!}\Bigl(m\Lambda^{m-1}\varphi+(n-m-1)L\Lambda^m \varphi\Bigr).
\]
{\rm (v)} Let $n\ge 3$. For $\varphi\in \wedge^{m, m-1}V^*\ (2\le m\le n-1)$, we have
\[
\Lambda^{n-3}L^{n-m-1}\varphi=
\frac{(n-3)!(n-m-1)!}{(m-1)!}\Bigl((m-1)\Lambda^{m-2}\varphi+(n-m-1)L\Lambda^{m-1}\varphi\Bigr).
\]
\end{prop}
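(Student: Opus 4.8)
The plan is to prove all five identities from the Lefschetz decomposition of $\wedge V^*$ furnished by the $\mathfrak{sl}(2)$-triple $\{L, H, \Lambda\}$, together with the commutation formulas \eqref{Lambda-L-m}, \eqref{Lambda-m-L} and their consequence \eqref{Lambda-k-L-m}. The one structural fact I will use repeatedly is that a primitive form $\psi$ (i.e. $\Lambda\psi = 0$) of bidegree $(p,q)$ generates an irreducible module on which $H$ acts with top weight $n-p-q$, so that $L^a\psi = 0$ for $a > n-p-q$. The strategy is uniform: decompose the given form into primitive components $L^j\psi_j$, move the powers of $L$ and $\Lambda$ through using \eqref{Lambda-k-L-m}, and read off the constants.

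For (i) and (ii) the form has (near-)top bidegree, so its Lefschetz decomposition has a single primitive term: if $\varphi\in\wedge^{n,n}V^*$ then $\varphi=L^n\psi$ with $\psi\in\wedge^{0,0}V^*$ primitive, and if $\varphi\in\wedge^{n,n-1}V^*$ then $\varphi=L^{n-1}\psi$ with $\psi\in\wedge^{1,0}V^*$ (automatically primitive). Applying \eqref{Lambda-k-L-m} with $k=m$ gives $\Lambda^n\varphi=(n!)^2\psi$ and $\Lambda^{n-1}\varphi=((n-1)!)^2\psi$ respectively, and substituting back yields (i) and (ii). For (iii), I would apply (i) to the top form $L\varphi\in\wedge^{n,n}V^*$, obtaining $L\varphi=(n!)^{-2}L^n\Lambda^n(L\varphi)$, and then commute the extra $L$ through $\Lambda^n$ via \eqref{Lambda-m-L}: since $\Lambda^n\varphi=0$ for $\varphi\in\wedge^{n-1,n-1}V^*$, we get $\Lambda^n L\varphi=[\Lambda^n,L]\varphi=n\Lambda^{n-1}\varphi$, and the stated constant $1/(n!(n-1)!)$ drops out of $\frac{n}{(n!)^2}$.

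Parts (iv) and (v) are the substantive cases. I would write $\varphi=\sum_j L^j\psi_j$ with $\psi_j$ primitive, of bidegree $(m-j,m-j)$ in case (iv) and $(m-j,m-1-j)$ in case (v). The key point is that in $\Lambda^{n-2}L^{n-m-1}\varphi=\sum_j\Lambda^{n-2}L^{n-m-1+j}\psi_j$, and its analogue with $\Lambda^{n-3}$ in (v), only two summands survive: whenever $n-m-1+j$ is strictly smaller than the number of applied $\Lambda$'s, one uses \eqref{Lambda-k-L-m} to reduce $L^{n-m-1+j}\psi_j$ to a multiple of the primitive form $\psi_j$, after which the remaining factors of $\Lambda$ annihilate it by primitivity. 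A direct index count shows the surviving terms are $j=m-1,m$ in (iv) and $j=m-2,m-1$ in (v). Their constants are computed from \eqref{Lambda-k-L-m}, and the same formula applied with fewer $\Lambda$'s expresses $\psi_{m-1}$ and $L\psi_m$ (resp. $\psi_{m-2}$ and $L\psi_{m-1}$) in terms of $\Lambda^{m-1}\varphi$ and $\Lambda^m\varphi$ (resp. $\Lambda^{m-2}\varphi$ and $\Lambda^{m-1}\varphi$); substituting these back gives the claimed expressions.

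The main obstacle is purely the bookkeeping of factorial constants in (iv) and (v): one must evaluate the several products $\prod_{i=1}^{k}(n-p-q-M+i)$ produced by \eqref{Lambda-k-L-m} and verify that, after eliminating the primitive pieces, the coefficients of $\Lambda^{m-1}\varphi$ and $L\Lambda^m\varphi$ collapse to $\frac{(n-2)!(n-m-1)!}{m!}\cdot m$ and $\frac{(n-2)!(n-m-1)!}{m!}\cdot(n-m-1)$. This ultimately rests on an elementary cancellation such as $-m+(n-1)(n-m)=n(n-m-1)$, which generates the factor $(n-m-1)$ in (iv); the analogous simplification handles (v). No geometric input is needed beyond the $\mathfrak{sl}(2)$ formalism already set up.
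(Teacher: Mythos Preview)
Your proposal is correct and follows exactly the approach the paper indicates: the paper does not give a detailed proof of this proposition but simply says that the formulas follow from the Lefschetz decomposition together with the commutation relations \eqref{Lambda-L-m} and \eqref{Lambda-m-L}, which is precisely the route you take. Your write-up in fact supplies the details the paper omits, including the observation that only the top two primitive components survive in (iv) and (v).
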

\subsection{Definition of the $\mathcal{I}'_{\Phi}$-curvatures}
Let $(M, H, J)$ be a $(2n+1)$-dimensional compact strictly pseudoconvex CR manifold. We first define a pseudo-hermitian invariant $\mathcal{I}'_{\Phi}\in\calE(-m-1, -m-1)$ for each invariant polynomial $\Phi$ of degree $m$, and later restrict to the case $m=n$. It suffices to consider $\Phi$ of the form
\[
\Phi=T_{m_1}T_{m_2}\cdots T_{m_k}, \quad m_1+\cdots+m_k=m,
\]
where $T_j$ is defined by \eqref{def-T}. First, for each $p\ge1$, we set
\begin{align*}
\wt R^{(p)}_{A_1\ol B_1\cdots A_p \ol B_p}:=\Omega_{\g_1}{}^{\g_2}{}_{A_1\ol B_1} \Omega_{\g_2}{}^{\g_3}{}_{A_2\ol B_2}
\cdots\Omega_{\g_p}{}^{\g_1}{}_{A_p\ol B_p}\in\calE_{A_1\ol B_1\cdots A_p \ol B_p}(-p, -p),
\end{align*} 
where $\Omega_{\g\ol\mu A}{}^B$ is the curvature of the CR tractor connection.
Then, for the above $\Phi$, we define 
\begin{align*}
S^\Phi_{A_1\ol B_1} 
&:=h^{A_2\ol B_2}\cdots h^{A_m \ol B_m}
\wt R^{(m_1)}_{[A_1\ol B_1\cdots A_{m_1}\ol B_{m_1}}
\wt R^{(m_2)}_{A_{m_1+1}\ol B_{m_1+1}\cdots\ }\cdots\ 
\wt R^{(m_k)}_{\quad \cdots A_{m}\ol B_{m}]} \\
&=\begin{pmatrix}
0 & 0 & 0 \\
S^\Phi_{\a_1\ol\infty} & S^\Phi_{\a_1\ol\b_1} & 0 \\
S^\Phi_{\infty\ol\infty} & S^\Phi_{\infty\ol\b_1} & 0
\end{pmatrix}
\in\calE_{A_1\ol B_1}(-m, -m).
\end{align*}
Since $S^\Phi_{A\ol B}$ is hermitian, we have
\[
S^\Phi_{\a\ol\b}=\ol{S^\Phi_{\b\ol\a}}, \quad S^\Phi_{\a\ol\infty}=
\ol{S^\Phi_{\infty\ol\a}}, \quad S^\Phi_{\infty\ol\infty}=\ol{S^\Phi_{\infty\ol\infty}}.
\]
The real density
\[
S^\Phi:=h^{A\ol B}S^\Phi_{A\ol B}=\bh^{\a\ol\b}S^\Phi_{\a\ol\b}\in\calE(-m, -m)
\]
is CR invariant and a polynomial in the Chern--Moser tensor. Finally, using the CR $D$-operator, we define 
\begin{align*}
 \wt S^\Phi_{A\ol B}&:= S^\Phi_{A\ol B}-\frac{1}{n}S^\Phi h_{A\ol B}-\frac{1}{nm(n-2m)}(Z_{\ol B}D_A S^\Phi+Z_A D_{\ol B}S^\Phi)  \\
&=\begin{pmatrix}
0 & 0 & 0 \\
X^\Phi_\a & S^\Phi_{(\a\ol\b)_0} & 0 \\
\mathcal{I}'_\Phi & X^\Phi_{\ol\b} & 0
\end{pmatrix} \in\calE_{A\ol B}(-m, -m),
\end{align*}
where we have set 
\begin{align*}
X^\Phi_\a&:=S^\Phi_{\a\ol\infty}-\frac{1}{nm}\nabla_\a S^\Phi & &\in\calE_\a(-m, -m), \\
\mathcal{I}'_\Phi&:=S^\Phi_{\infty\ol\infty}-\frac{1}{nm(n-2m)}(\Delta_b S^\Phi+2m PS^\Phi)
& &\in\calE(-m-1, -m-1),
\end{align*}
and $X^\Phi_{\ol\a}:=\ol{X^\Phi_\a}$.
Let $\wh\theta=e^{\U}\theta$ be a rescaling of the contact form. By using \eqref{Z-W-Y},
we obtain the transformation formulas
\begin{equation}\label{hat-X-I}
\begin{aligned}
\wh X^\Phi_\a&=X^\Phi_\a-S^\Phi_{(\a\ol\b)_0}\U^{\ol\b}, \\
\wh {\mathcal{I}}'_\Phi&=\mathcal{I}'_\Phi - X^\Phi_\a \U^\a-X^\Phi_{\ol\a}\U^{\ol\a}
+S^\Phi_{(\a\ol\b)_0}\U^\a\U^{\ol\b}.
\end{aligned}
\end{equation}
\begin{rem}
By the correspondence between the CR tractor bundle and the ambient metric, we can also write as 
\begin{align*}
\wt R^{(p)}_{A_1\ol B_1\cdots A_p \ol B_p}&=
\wt R_{C_1}{}^{C_2}{}_{A_1\ol B_1} \wt R_{C_2}{}^{C_3}{}_{A_2\ol B_2}
\cdots\wt R_{C_p}{}^{C_1}{}_{A_p\ol B_p} \\
&=S_{C_1}{}^{C_2}{}_{A_1\ol B_1}S_{C_2}{}^{C_3}{}_{A_2\ol B_2}
\cdots S_{C_p}{}^{C_1}{}_{A_p\ol B_p},
\end{align*}
where $\wt R_{A\ol B C\ol E}$ is the ambient curvature tensor and $S_{A\ol B C\ol E}$ is the CR Weyl tractor; see Proposition \ref{R-S}.
\end{rem}
\subsection{Relation to the characteristic class $\Phi(T^{1, 0}M)$}
Now we specialize to the case of $m=n$, where we have
\begin{align*}
X^\Phi_\a&=S^\Phi_{\a\ol\infty}-\frac{1}{n^2}\nabla_\a S^\Phi, \\
\mathcal{I}'_\Phi&=S^\Phi_{\infty\ol\infty}+\frac{2}{n^2}PS^\Phi+\frac{1}{n^3}\Delta_b S^\Phi.
\end{align*}
If we set
\[
S^{(p)}_{\a_1\ol\b_1\cdots\a_p\ol\b_p}:=
S_{\g_1}{}^{\g_2}{}_{\a_1\ol\b_1}S_{\g_2}{}^{\g_3}{}_{\a_2\ol\b_2}\cdots S_{\g_p}{}^{\g_1}{}_{\a_p\ol\b_p},
\]
then 
\[
S^\Phi_{\a_1\ol\b_1}=\bh^{\a_2\ol\b_2}\cdots \bh^{\a_n\ol\b_n}
S^{(m_1)}_{[\a_1\ol\b_1\cdots\a_{m_1}\ol\b_{m_1}}S^{(m_2)}_{\a_{m_1+1}\ol\b_{m_1+1}\cdots\ }
\cdots \ S^{(m_k)}_{\quad \cdots \a_n\ol\b_n]}.
\]
Since $\wedge^{n, n}(T^{1, 0}M)^*\subset {\rm Im}\, L^n$ with $L:=d\theta\wedge(\cdot)$ by the Lefschetz decomposition, we have
\[
S^\Phi_{(\a\ol\b)_0}=0.
\]
Therefore, by \eqref{hat-X-I} $X^\Phi_\a$ is CR invariant, and $\mathcal{I}'_\Phi$ transforms as
\begin{equation}\label{hat-I-n}
\wh {\mathcal{I}}'_\Phi=\mathcal{I}'_\Phi - X^\Phi_\a \U^\a-X^\Phi_{\ol\a}\U^{\ol\a}.
\end{equation}
We define 
\[
(VS^{(p)})_{\a \a_1\ol\b_1\cdots \a_p\ol\b_p}
:=V_{\g_0}{}^{\g_1}{}_\a S_{\g_1}{}^{\g_2}{}_{\a_1\ol\b_1}\cdots S_{\g_p}{}^{\g_0}{}_{\a_p\ol\b_p}.
\]
Then from $\Omega_\g{}^\mu{}_{\a\ol\infty}=i V_\g{}^\mu{}_\a$, we get
\[
\wt R^{(p)}_{\a_1\ol\infty\a_2\ol\b_2\cdots \a_p\ol\b_p}=i(VS^{(p-1)})_{\a_1\a_2\ol\b_2\cdots \a_p\ol\b_p}.
\]
To simplify the presentation, we set
\[
\underline{m}_p:=m_1+m_2+\cdots+m_{p-1}+1
\]
and define $S'^{(p)}_\a$ by
\begin{align*}
S'^{(p)}_{\a_{\underline{m}_p}}&:=
\bh^{\a_1\ol\b_1}\cdots  \bh^{\a_{\underline{m}_p-1}\ol\b_{\underline{m}_p-1}}\bh^{\a_{\underline{m}_p+1}\ol\b_{\underline{m}_p+1}}
\cdots \bh^{\a_n\ol\b_n}  \\
&\quad\quad \cdot S^{(m_1)}_{[\a_1\ol\b_1\cdots}\ \cdots (VS^{(m_p-1)})_{\a_{\underline{m}_p}\cdots}\cdots S^{(m_k)}_{\quad \cdots \a_n\ol\b_n]}.
\end{align*}
Then we have
\begin{align*}
S^\Phi_{\a\ol\infty}
&= \sum_{p=1}^{k}\Bigl(\frac{m_p}{n}\bh^{\a_2\ol\b_2}\cdots \bh^{\a_n\ol\b_n}
S^{(m_1)}_{[\a_{\underline{m}_p}\ol\b_{\underline{m}_p}\a_2\ol\b_2\cdots}S^{(m_2)}_{\a_{m_1+1}\ol\b_{m_1+1}\cdots} \\
&\quad\quad\quad\quad\cdot  \cdots i(VS^{(m_p-1)})_{\a \a_{\underline{m}_p+1}\cdots}\cdots S^{(m_k)}_{\quad\cdots\a_n\ol\b_n]} \Bigr)\\
&=\frac{i}{n}\sum_{p=1}^{k} m_p S'^{(p)}_\a.
\end{align*}
Thus, $X^\Phi_\a$ is written as
\begin{equation}\label{X}
X^\Phi_\a=\frac{i}{n}\sum_{p=1}^{k}m_pS'^{(p)}_\a-\frac{1}{n^2}\nabla_\a S^\Phi.
\end{equation}

We recall that if $M$ admits a pseudo-Einstein contact form, then the first Chern class of $T^{1, 0}M$ vanishes in the real cohomology {\cite[Proposition D]{Lee}}. 
In this case, $X^\Phi_\a$ is related to the characteristic class $\Phi(T^{1,0}M)$ as follows:
\begin{prop}\label{X-Phi}
If $c_1(T^{1,0}M)=0$ in $H^2(M; \mathbb{R})$, then the CR invariant $2n$-form
\begin{equation}\label{X-form}
n^2(X^\Phi_\a\theta^\a+X^\Phi_{\ol\a}\theta^{\ol\a})\wedge\theta\wedge(d\theta)^{n-1}
\end{equation}
is a representative of the characteristic class $\Phi(T^{1, 0}M)\in H^{2n}(M; \mathbb{R})$.
\end{prop}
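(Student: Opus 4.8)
The plan is to realize the characteristic class $\Phi(T^{1,0}M)$ through the CR tractor connection and then to identify an explicit representative of it with the $2n$-form \eqref{X-form}, up to an exact form. First I would pass to the associated graded bundle. The tractor connection induces a connection on the filtered bundle $\calE^A$, and on the middle graded piece $\calE^\a(-1,0)=T^{1,0}M\otimes\calE(-1,0)$ its curvature is exactly the $W_A^\a W_\b^B$-block $\Omega_\a{}^\b$ of the tractor curvature $\Omega_A{}^B$. Since characteristic classes are computed from $\mathrm{gr}\,\calE^A=\calE(0,1)\oplus\calE^\a(-1,0)\oplus\calE(-1,0)$ and the power sums $T_p$ are additive over direct sums, the two density summands contribute only rational multiples of $c_1(T^{1,0}M)$; under the hypothesis $c_1(T^{1,0}M)=0$ these drop out, so $[\Phi(\Omega_\a{}^\b)]=\Phi\bigl(T^{1,0}M\otimes\calE(-1,0)\bigr)=\Phi(T^{1,0}M)$ in $H^{2n}(M;\R)$. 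It therefore suffices to show that the closed form $\Phi(\Omega_\a{}^\b)$ equals \eqref{X-form} modulo an exact form.

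Next I would expand $\Phi(\Omega_\a{}^\b)$ by splitting $\Omega_\a{}^\b=\Sigma_\a{}^\b+\Xi_\a{}^\b$, where $\Sigma_\a{}^\b=S_\a{}^\b{}_{\g\ol\mu}\th^\g\wedge\th^{\ol\mu}$ is the $(1,1)$ (Chern--Moser) part and $\Xi_\a{}^\b=V_\g{}^\b{}_\a\th^\g\wedge\bth-V^\b{}_{\a\ol\g}\th^{\ol\g}\wedge\bth$ is the part containing $\bth$. Because $\bth\wedge\bth=0$, at most one $\Xi$-factor can survive, so the degree-$2n$ form $\Phi(\Omega_\a{}^\b)$ decomposes as $\Phi(\Sigma)+R$, with $\Phi(\Sigma)$ the pure $(n,n)$-contribution and $R$ the sum of terms with exactly one $\Xi$-factor. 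Since each $T_p(\Omega_\a{}^\b)=i^p\,\mathrm{tr}(\Omega_\a{}^\b)^p$ contributes the tensors $\wt R^{(p)}$, the Lefschetz lowerings of these two pieces reproduce the tensors used to build $\wt S^\Phi_{A\ol B}$: applying $\Lambda^n$ to $\Phi(\Sigma)$ yields a constant multiple of $S^\Phi$, while applying $\Lambda^{n-1}$ to the $(n,n-1)$- and $(n-1,n)$-parts of $R$ yields constant multiples of $S^\Phi_{\a\ol\infty}\th^\a$ and its conjugate, that is of $\tfrac{i}{n}\sum_p m_p S^{\prime(p)}_\a$ by \eqref{X}.

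Then I would reassemble the two pieces. As $\wedge^{n,n}(T^{1,0}M)^*$ is one-dimensional, Proposition \ref{diff-form}(i) gives $\Phi(\Sigma)=\tfrac{1}{(n!)^2}L^n\Lambda^n\Phi(\Sigma)$, a multiple of $S^\Phi(d\th)^n$; the vanishing $S^\Phi_{(\a\ol\b)_0}=0$ guarantees that the coefficient is precisely the full trace $S^\Phi$. Integrating by parts through
\[
S^\Phi(d\th)^n=d\bigl(S^\Phi\,\th\wedge(d\th)^{n-1}\bigr)-(\nabla_\a S^\Phi\,\th^\a+\nabla_{\ol\a}S^\Phi\,\th^{\ol\a})\wedge\th\wedge(d\th)^{n-1},
\]
the pure part supplies, modulo exact forms, exactly the $-\tfrac{1}{n^2}\nabla_\a S^\Phi$ term of $X^\Phi_\a$. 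Using Proposition \ref{diff-form}(ii) to rewrite $R=\bth\wedge\beta$ as $(\,\cdot\,)_\a\th^\a\wedge\th\wedge(d\th)^{n-1}+\mathrm{c.c.}$, the one-$\Xi$ part supplies the $S^\Phi_{\a\ol\infty}$ term. Adding the two contributions and comparing with $X^\Phi_\a=S^\Phi_{\a\ol\infty}-\tfrac{1}{n^2}\nabla_\a S^\Phi$ identifies $\Phi(\Omega_\a{}^\b)$ with \eqref{X-form} up to an exact form, which together with the first step proves the proposition.

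The main obstacle will be the bookkeeping of the normalizing constants. One must track the factors $i^{m_j}$ coming from each $T_{m_j}$, the cyclic multiplicities $m_j$ that assemble the sum $\sum_p m_p S^{\prime(p)}_\a$, and the Lefschetz constants from Proposition \ref{diff-form}, and verify that the pure-trace and one-$\Xi$ contributions combine with precisely the normalization $n^2$ appearing in \eqref{X-form} (equivalently, that the integration by parts produces coefficient $1$ on $\nabla_\a S^\Phi$ and the one-$\Xi$ term produces coefficient $n^2$ on $S^\Phi_{\a\ol\infty}$). A secondary point needing care is the first step: justifying that the connection induced on the subquotient $\calE^\a(-1,0)$ has curvature $\Omega_\a{}^\b$, and that the density twist together with the two graded line bundles alters the characteristic class only by multiples of $c_1(T^{1,0}M)$.
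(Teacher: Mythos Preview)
Your proposal is correct and follows essentially the same route as the paper: compute $\Phi$ on the tractor curvature, split it into the pure Chern--Moser $(n,n)$-piece $S^\Phi(d\th)^n$ and the single-$\Xi$ piece carrying $S^\Phi_{\a\ol\infty}$, and integrate by parts to assemble $X^\Phi_\a$. The only point handled more carefully in the paper is the first step: rather than working with $\calE^A$ and its associated graded (which requires the possibly non-global bundle $\calE(1,0)$), the paper twists to $\calE^A(1,0)\cong\calE(1,1)\oplus\calE^\a\oplus\calE(0,0)$, which always exists globally and has genuinely trivial line summands, and then observes that its curvature is $\Omega_A{}^B+\tfrac{1}{n+2}(d\omega_\g{}^\g)\d_A{}^B$ with $d\omega_\g{}^\g$ exact under the hypothesis $c_1=0$; since $\Omega_A{}^B$ is strictly triangular one has $\Phi(\Omega_A{}^B)=\Phi(\Omega_\a{}^\b)$, recovering exactly the form you analyze.
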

\begin{proof}
We consider the weighted tractor bundle
\[
\calE^A(1, 0)\cong \calE(1, 1)\oplus \calE^\a \oplus \calE(0, 0),
\]
which can always be defined globally over $M$.
Since $\calE(1, 1)$ and $\calE(0, 0)$ are trivial, we have $\Phi(T^{1, 0}M)=\Phi(\calE^A(1, 0 ))$. 
The curvature form of the connection on $\calE^A(1, 0)$ is given by
\[
\Omega_A{}^B+\frac{1}{n+2} (d\omega_\g{}^\g)\d_A{}^B,
\]
where $\omega_\a{}^\b$ is the Tanaka--Webster connection form and 
\[
\Omega_A{}^B=
\begin{pmatrix}
0 & 0 & 0 \\
* & S_\a{}^\b{}_{\g\ol\mu}\theta^\g\wedge\theta^{\ol\mu}
+V_\a{}^\b{}_\g\theta^\g\wedge\theta+V^\b{}_{\a\ol\mu}\theta\wedge\theta^{\ol\mu} & 0 \\
* & * & 0 
\end{pmatrix}
\]
is the tractor curvature form. Since $d\omega_\g{}^\g$ is exact by the assumption $c_1(T^{1, 0}M)=0$, we have
\[
\Phi(\calE^A(1, 0))=[\Phi(\Omega)].
\] 
By using Proposition \ref{diff-form} (i), (ii), we can compute $\Phi(\Omega)$ as
\[
\Phi(\Omega)=\Phi_0+\Phi_1+\ol{\Phi_1},
\]
where
\begin{align*}
\Phi_0&=i^n S^{(m_1)}_{[\a_1\ol\b_1\cdots\a_{m_1}\ol\b_{m_1}}\cdots 
S^{(m_k)}_{\quad \cdots\a_n\ol\b_n]}\theta^{\a_1}\wedge\theta^{\ol\b_1}\cdots\theta^{\a_n}\wedge\theta^{\ol\b_n} \\
&=S^{\Phi}(d\theta)^n, \\
\Phi_1&=
i^n m_1(VS^{(m_1-1)})_{[\a_1\a_2\ol\b_2\cdots}\ S^{(m_2)}_{\a_{m_1+1}\ol\b_{m_1+1}\cdots}
\cdots S^{(m_k)}_{\quad \cdots \a_n\ol\b_n]} \\
&\hspace{5cm} \cdot(\theta^{\a_1}\wedge\theta)\wedge\theta^{\a_2}\wedge\theta^{\ol\b_2}\wedge\cdots\wedge\theta^{\a_n}\wedge\theta^{\ol\b_n} \\
&\quad +\cdots \\
&\quad +
i^n m_k S^{(m_1)}_{[\a_1\ol\b_1\cdots}\ S^{(m_2)}_{\a_{m_1+1}\ol\b_{m_1+1}\cdots}
\ \cdots (VS^{(m_k-1)})_{\a_{n-m_k+1}\a_{n-m_k+2}\ol\b_{n-m_k+2}\cdots \a_n\ol\b_n]} \\
&\hspace{4cm}\cdot\theta^{\a_1}\wedge\theta^{\ol\b_1}\wedge\cdots\wedge(\theta^{\a_{n-m_k+1}}\wedge\theta)\wedge\cdots\wedge\theta^{\a_n}\wedge\theta^{\ol\b_n} \\
&=in\sum_{p=1}^k m_p S'^{(p)}_\a\theta^\a\wedge\theta\wedge(d\theta)^{n-1}.
\end{align*}
Hence by \eqref{X}, we obtain 
\begin{align*}
\Phi(\Omega)&=S^\Phi(d\theta)^n+
in\sum_{p=1}^k m_p(S'^{(p)}_\a\theta^\a-S'^{(p)}_{\ol\a}\theta^{\ol\a})\wedge\theta\wedge(d\theta)^{n-1} \\
&=n^2(X^\Phi_\a\theta^\a+X^\Phi_{\ol\a}\theta^{\ol\a})\wedge\theta\wedge(d\theta)^{n-1}
+d\bigl(S^\Phi\theta\wedge(d\theta)^{n-1}\bigr),
\end{align*}
which completes the proof.
\end{proof}


\subsection{CR invariance of the total $\mathcal{I}'_\Phi$-curvature}
By using the vanishing of the cohomology $\Phi(T^{1, 0}M)$, we can prove that when $\th$ is a pseudo-Einstein contact form, the integral of $\mathcal{I}'_\Phi$ is independent of the choice of $\th$:
\begin{thm}\label{invariance-I-prime}
Let $\Phi$ be an invariant polynomial of degree $n$. For pseudo-Einstein contact forms $\th$ and $\wh\th$, we have
\[
\int_M \wh{\mathcal{I}}'_\Phi=\int_M \mathcal{I}'_\Phi.
\]
\end{thm}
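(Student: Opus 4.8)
The plan is to combine the transformation law \eqref{hat-I-n} with the cohomological vanishing of $\Phi(T^{1,0}M)$ furnished by Proposition \ref{X-Phi} and Theorem \ref{chern-cr}. Fix the scale $\th$ and write $\wh\th=e^{\U}\th$, where both contact forms are pseudo-Einstein. Then \eqref{hat-I-n} gives the pointwise identity $\wh{\mathcal{I}}'_\Phi-\mathcal{I}'_\Phi=-X^\Phi_\a\U^\a-X^\Phi_{\ol\a}\U^{\ol\a}$ in $\calE(-n-1,-n-1)$, so it suffices to show
\[
\int_M \bigl(X^\Phi_\a\U^\a+X^\Phi_{\ol\a}\U^{\ol\a}\bigr)\,\th\wedge(d\th)^n=0,
\]
with everything computed in the single fixed scale $\th$ (which keeps the weighted-density bookkeeping consistent, since $X^\Phi_\a\in\calE_\a(-n,-n)$ pairs with $\th\wedge(d\th)^n\in\calE(n+1,n+1)$-valued forms to give an honest top form).

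The key step is to recognize this integrand as a wedge product built from the closed one-form detecting CR pluriharmonicity. Set $\gamma:=(X^\Phi_\a\th^\a+X^\Phi_{\ol\a}\th^{\ol\a})\wedge\th\wedge(d\th)^{n-1}$, so that by Proposition \ref{X-Phi} the CR invariant $2n$-form $\alpha:=n^2\gamma$ represents $\Phi(T^{1,0}M)$. Using $d\th=i\bh_{\a\ol\b}\th^\a\wedge\th^{\ol\b}$ and the elementary Lefschetz-trace identity $\th^\a\wedge\th^{\ol\b}\wedge\th\wedge(d\th)^{n-1}=(in)^{-1}\bh^{\a\ol\b}\,\th\wedge(d\th)^n$, a direct computation shows that wedging $\gamma$ with the real one-form $i(\U_\a\th^\a-\U_{\ol\a}\th^{\ol\a})$ produces exactly the symmetric combination:
\[
i(\U_\a\th^\a-\U_{\ol\a}\th^{\ol\a})\wedge\gamma=\frac1n\bigl(X^\Phi_\a\U^\a+X^\Phi_{\ol\a}\U^{\ol\a}\bigr)\,\th\wedge(d\th)^n.
\]
Since $\gamma$ already carries a factor of $\th$, the $\th$-component $-\tfrac1n(\Delta_b\U)\th$ of the form $\eta$ of Proposition \ref{cr-pluri} is annihilated, so $i(\U_\a\th^\a-\U_{\ol\a}\th^{\ol\a})\wedge\gamma=\eta\wedge\gamma$. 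Thus the integrand we must control equals $n\,\eta\wedge\gamma=n^{-1}\eta\wedge\alpha$.

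It then remains to see that $\int_M\eta\wedge\alpha=0$. Because $\th$ is pseudo-Einstein, $c_1(T^{1,0}M)=0$, so Proposition \ref{X-Phi} applies and $\alpha$ represents $\Phi(T^{1,0}M)$; since $m=n$ satisfies $2m=2n\ge n+2$ for $n\ge2$, Theorem \ref{chern-cr} forces $\Phi(T^{1,0}M)=0$ in $H^{2n}(M;\R)$, whence $\alpha=d\beta$ is exact. On the other hand $\U$ is CR pluriharmonic, both $\th$ and $\wh\th$ being pseudo-Einstein, so Proposition \ref{cr-pluri} gives $d\eta=0$. Therefore $\eta\wedge\alpha=\eta\wedge d\beta=-d(\eta\wedge\beta)$, and Stokes' theorem on the closed manifold $M$ yields $\int_M\eta\wedge\alpha=0$, which finishes the proof.

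The main obstacle is the middle step: identifying the correct primitive-type one-form that converts the pointwise contraction $X^\Phi_\a\U^\a+X^\Phi_{\ol\a}\U^{\ol\a}$ into a wedge product with the representative $\gamma$. The subtlety is that wedging with $d\U$ would instead yield the antisymmetric combination $X^\Phi_\a\U^\a-X^\Phi_{\ol\a}\U^{\ol\a}$; one must use the $J$-twisted form $i(\U_\a\th^\a-\U_{\ol\a}\th^{\ol\a})$, whose closure modulo its $\th$-component is \emph{precisely} the CR-pluriharmonicity criterion $d\eta=0$ of Proposition \ref{cr-pluri}. Once this identification is made, the closedness of $\eta$ and the exactness of $\alpha$ dovetail through a single application of Stokes' theorem.
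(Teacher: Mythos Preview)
Your proof is correct and follows essentially the same approach as the paper: both use the transformation law \eqref{hat-I-n}, identify the integrand $(X^\Phi_\a\U^\a+X^\Phi_{\ol\a}\U^{\ol\a})\,\th\wedge(d\th)^n$ as $n\,\eta\wedge\gamma$ via the Lefschetz-trace identity, and conclude by combining the closedness of $\eta$ (Proposition \ref{cr-pluri}) with the exactness of $\gamma$ (Proposition \ref{X-Phi} and Theorem \ref{chern-cr}). Your version is slightly more explicit in spelling out the Stokes step $\eta\wedge d\beta=-d(\eta\wedge\beta)$, whereas the paper simply asserts that $\eta\wedge\gamma$ is exact.
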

\begin{proof}
We can write $\wh\theta=e^{\U}\theta$ with a CR pluriharmonic function $\U$. If we set
\[
\eta:=i(\U_\a\theta^\a-\U_{\ol\a}\theta^{\ol\a})-\frac{1}{n}(\Delta_b \U) \theta,
\]
then we have $d\eta=0$ by Proposition \ref{cr-pluri}, and
\begin{align*}
\eta\wedge(X^\Phi_\a\theta^\a+X^\Phi_{\ol\a}\theta^{\ol\a})\wedge\theta\wedge(d\theta)^{n-1}
&=i(\U_\a X^\Phi_{\ol\b}+\U_{\ol\b}X^\Phi_{\a})\theta^\a\wedge\theta^{\ol\b}\wedge\theta\wedge(d\theta)^{n-1} \\
&=\frac{1}{n}(X^\Phi_\a\U^\a+X^\Phi_{\ol\a}\U^{\ol\a})\theta\wedge(d\theta)^n.
\end{align*} 
By Theorem \ref{chern-cr} and Proposition \ref{X-Phi}, this is an exact form on $M$. Hence we obtain
\[
\int_M(\wh{\mathcal{I}}'_\Phi-{\mathcal{I}}'_\Phi)=
-\int_M (X^\Phi_\a\U^\a+X^\Phi_{\ol\a}\U^{\ol\a})\theta\wedge(d\theta)^n=0.
\]
\end{proof}

\begin{rem}
If we realize $M$ as the boundary of a domain $\Omega$ in a K\"ahler manifold as in Remark \ref{rem-kahler}, we can give an alternative proof to 
the CR invariance of $\ol{\mathcal{I}}'_\Phi$ without using Theorem \ref{chern-cr} as follows:

We extend $\U$ to a pluriharmonic function $\wt\U$ on $\ol\Omega$ and set 
$\wt\eta:=i(\pa\wt\U-\ol\pa\wt\U)$. Then, $d\wt\eta=0$ and $\eta=\wt\eta\,|_{TM}$.
Since $T^{1, 0}X|_M$ is isomorphic to the direct sum of $T^{1, 0}M$ and a trivial line bundle, we have $\Phi(T^{1, 0}M)=\Phi(T^{1, 0}X)|_{TM}$. 
Therefore, by Proposition \ref{X-Phi}, 
\begin{align*}
\int_M(\wh{\mathcal{I}}'_\Phi-{\mathcal{I}}'_\Phi)&=
-\int_M (X^\Phi_\a\U^\a+X^\Phi_{\ol\a}\U^{\ol\a})\theta\wedge(d\theta)^n \\
&=-\frac{1}{n}\int_M \wt\eta\wedge \Phi(T^{1, 0}X) \\
&=-\frac{1}{n}\int_\Omega d\bigl(\wt\eta\wedge \Phi(T^{1, 0}X)\bigr) \\
&=0. 
\end{align*}
\end{rem}
\begin{rem}
The de Rham cohomology of $M$ is canonically isomorphic to the cohomology of the Rumin complex (\cite{R}). For an invariant polynomial $\Phi$ of degree $n$, exactness of the $2n$-form 
\eqref{X-form} implies that one can write as
\[
X^\Phi_\a=\nabla^{\ol\b}\omega_{\a\ol\b}+i\nabla^\b \omega_{\a\b}
\]
with a trace-free hermitian tensor $\omega_{\a\ol\b}\in\calE_{(\a\ol\b)_0}(-n+1, -n+1)$ and a skew symmetric tensor $\omega_{\a\b}\in\calE_{[\a\b]}(-n+1, -n+1)$. It follows that 
\[
{\rm Re}(X^\Phi_\a \U^\a)=P_1 \U+P_2\U,
\]
where 
\[
P_1\U :={\rm Re}((\nabla^{\ol\b}\omega_{\a\ol\b})\U^\a), \quad   
P_2\U:= {\rm Re} ((i\nabla^\b \omega_{\a\b})\U^\a).
\]
For CR pluriharmonic functions $\U, \U'$, we have
\[
\int_M (\U P_1\U'-\U' P_1\U)=\int_M (\U P_2\U'+\U' P_2 \U)=0,
\]
which explains the CR invariance of $\ol{\mathcal{I}}'_\Phi$. As is pointed out in 
{\cite[Remark 8.12]{CG}}, the operator $\U\mapsto {\rm Re}(X^\Phi_\a \U^\a)$ is formally self-adjoint on the space of CR pluriharmonic functions if we can take $\omega_{\a\b}=0$.

\end{rem}

\subsection{The $\mathcal{I}_\Phi$-invariant}
We can define a CR invariant density 
$\mathcal{I}_\Phi$, for which $\mathcal{I}'_\Phi$ can be considered as the ``primed analogue'' (cf. {\cite[Remark 8.11]{CG}}):

\begin{prop}
For an invariant polynomial $\Phi$ of degree $m\ge 2$, define a density 
$\mathcal{I}_\Phi\in\calE(-m-1, -m-1)$ by
\begin{align*}
\mathcal{I}_\Phi&:=\nabla^\a X^\Phi_\a-S^\Phi_{\a\ol\b}P^{(\ol\b\a)_0}+(n-m)\mathcal{I}'_\Phi \\
&\,=\nabla^\a\Bigl(S^\Phi_{\a\ol\infty}-\frac{1}{nm}\nabla_\a S^\Phi\Bigr)-S^\Phi_{\a\ol\b}P^{(\ol\b\a)_0} \\
&\quad +(n-m)
\Bigl(S^\Phi_{\infty\ol\infty}-\frac{2}{n(n-2m)}PS^\Phi-\frac{1}{nm(n-2m)}\Delta_b S^\Phi\Bigr).
\end{align*}
Then we have
\[
D^A \wt S^\Phi_{A\ol B}=(n-2m+1)\mathcal{I}_\Phi Z_{\ol B}.
\]
In particular, $\mathcal{I}_\Phi$ is a CR invariant.
\end{prop}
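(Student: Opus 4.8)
The plan is to establish the identity by a direct computation in a fixed pseudo-hermitian scale $\th$, using the explicit matrix form of $\wt S^\Phi_{A\ol B}$ together with the tractor connection \eqref{tractor-conn} and the component formula for the tractor $D$-operator. Since $\wt S^\Phi_{A\ol B}$ is a weighted tractor of weight $(-m,-m)$, for which $n+w+w'=n-2m$, I would write $D^A\wt S^\Phi_{A\ol B}=h^{A\ol C}D_{\ol C}\wt S^\Phi_{A\ol B}$ and record the three slots of $D_{\ol C}\wt S^\Phi_{A\ol B}$: the algebraic $\ol Y$-slot $-m(n-2m)\wt S^\Phi_{A\ol B}$, the first-order $\ol W$-slot $(n-2m)\nabla_{\ol\g}\wt S^\Phi_{A\ol B}$, and a second-order $\ol Z$-slot built from $\nabla^{\ol\b}\nabla_{\ol\b}$, $\nabla_T$ and $P$. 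Throughout, $\nabla$ denotes the connection coupling the tractor connection on the two free indices with the Tanaka--Webster connection on the density weight.

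Contracting with $h^{A\ol C}$, the tractor metric interchanges the top and bottom slots and fixes the middle slot. Hence the $\ol Y$-slot of $D$ is paired with the bottom-$A$ row $(\mathcal{I}'_\Phi, X^\Phi_{\ol\b}, 0)$ of $\wt S^\Phi$, the $\ol Z$-slot with the vanishing top-$A$ row, and the $\ol W$-slot with the middle-$A$ row $(X^\Phi_\a, S^\Phi_{(\a\ol\b)_0}, 0)$ through $\bh^{\a\ol\g}$. I would then expand $\nabla_{\ol\g}\wt S^\Phi_{A\ol B}$ and $\nabla^{\ol\b}\nabla_{\ol\b}\wt S^\Phi_{A\ol B}$ by \eqref{tractor-conn}; the connection mixes the slots, producing the torsion $A_{\a\b}$, the Schouten tensor $P_{\a\ol\b}$ and the vector $T_\a$. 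After regrouping the result according to the three slots of the free index $\ol B$, the task is to check that the $Y_{\ol B}$ and $W_{\ol B}$ components cancel identically. This cancellation is precisely what the corrections $-\tfrac1n S^\Phi h_{A\ol B}$ and $-\tfrac{1}{nm(n-2m)}(Z_{\ol B}D_A S^\Phi+Z_A D_{\ol B}S^\Phi)$ in the definition of $\wt S^\Phi_{A\ol B}$, together with the normalizations in $X^\Phi_\a=S^\Phi_{\a\ol\infty}-\tfrac1{nm}\nabla_\a S^\Phi$ and in $\mathcal{I}'_\Phi$, are designed to produce.

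It remains to read off the surviving $Z_{\ol B}$ component. The $-m(n-2m)$ multiple of the bottom-$A$ entry contributes $-m(n-2m)\mathcal{I}'_\Phi$, while the second-order $\ol Z$-slot contributes, via the $\nabla_\a$ and $\nabla_{\ol\b}$ rows of \eqref{tractor-conn}, the divergence $\nabla^\a X^\Phi_\a$, the Schouten contraction $-S^\Phi_{\a\ol\b}P^{(\ol\b\a)_0}$ coming from the $P_{\a\ol\b}$-coupling with $S^\Phi_{(\a\ol\b)_0}$, and $\nabla_T$- and $P$-terms that reorganize, using the definition of $\mathcal{I}'_\Phi$, into the remaining multiple of $\mathcal{I}'_\Phi$. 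Collecting all contributions, the coefficient of $Z_{\ol B}$ should assemble into exactly $(n-2m+1)\bigl(\nabla^\a X^\Phi_\a - S^\Phi_{\a\ol\b}P^{(\ol\b\a)_0}+(n-m)\mathcal{I}'_\Phi\bigr)=(n-2m+1)\mathcal{I}_\Phi$. Finally, since $\wt S^\Phi_{A\ol B}$ is CR invariant and $D^A$ is a CR-invariant operator, $D^A\wt S^\Phi_{A\ol B}$ is a CR-invariant tractor; as $Z_{\ol B}$ is CR invariant, its coefficient $(n-2m+1)\mathcal{I}_\Phi$ is CR invariant, whence so is $\mathcal{I}_\Phi$.

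The main obstacle is the bookkeeping in the second-order $\ol Z$-slot: one must carefully track how $\nabla^{\ol\b}\nabla_{\ol\b}$ acts across both tractor indices of $\wt S^\Phi_{A\ol B}$, including the curvature terms arising from commuting covariant derivatives, and verify that every spurious contribution cancels against the $D$-operator corrections, so that no $Y_{\ol B}$ or $W_{\ol B}$ term survives and the $Z_{\ol B}$ coefficient is exactly $(n-2m+1)\mathcal{I}_\Phi$. Keeping the normalization constants consistent through this computation, so that the factor $(n-2m+1)$ emerges cleanly, is the delicate part.
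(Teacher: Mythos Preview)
Your plan contains a genuine gap. The direct computation you describe correctly produces the $Y_{\ol B}$-cancellation and the identification of the $Z_{\ol B}$-coefficient, but it does \emph{not} by itself kill the $W_{\ol B}$-component. In the paper's computation one finds
\[
D^A \wt S^\Phi_{A\ol B}=W_{\ol B}^{\ol\b} F_{\ol\b}+(n-2m+1)\mathcal{I}_\Phi Z_{\ol B},
\qquad
F_{\ol\b}=\tfrac{n-2m+1}{m}\bigl(m(n-m)S^\Phi_{\infty\ol\b}+m\nabla^\a S^\Phi_{\a\ol\b}-\nabla_{\ol\b}S^\Phi\bigr),
\]
and the vanishing of $F_{\ol\b}$ is \emph{not} a formal consequence of the correction terms in $\wt S^\Phi_{A\ol B}$ or of the normalizations in $X^\Phi_\a$ and $\mathcal{I}'_\Phi$. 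Those corrections are tailored to eliminate the $Y_{\ol B}$-slot and to package the $Z_{\ol B}$-slot; the quantity $F_{\ol\b}$ is a genuine differential relation among the components $S^\Phi_{\a\ol\b}$, $S^\Phi_{\infty\ol\b}$ and $S^\Phi$ of $S^\Phi_{A\ol B}$, and its vanishing requires an input you never invoke: a Bianchi-type identity for the tractor curvature.

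Concretely, the paper obtains $F_{\ol\b}=0$ from the closedness of the characteristic form $\Phi(\Omega)$ on $M$. Applying a Lefschetz-type decomposition to $L^{n-m-1}\Phi(\Omega)$ and reading off the coefficient of $\theta^\g\wedge\theta^\a\wedge\theta^{\ol\b}\wedge(d\theta)^{n-2}$ in $d\bigl(L^{n-m-1}\Phi(\Omega)\bigr)=0$ yields precisely
\[
m\nabla^{\ol\b}S^\Phi_{\a\ol\b}-\nabla_\a S^\Phi+m(n-m)S^\Phi_{\a\ol\infty}=0,
\]
which is the conjugate of $F_{\ol\b}=0$. Without this step (or an equivalent second Bianchi argument for $\Omega_A{}^B$), your computation cannot close: the ``spurious'' $W_{\ol B}$-term does not cancel against anything in the $D$-operator or the correction terms, and the asserted identity fails to follow. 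You should insert this argument at the point where you claim the $W_{\ol B}$-component vanishes.
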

\begin{rem}
Since any tractor $t\in\calE_*(w, w')$ satisfies
\[
D^{\ol B}(t Z_{\ol B})=(n+w+1)(n+w+w'+2)t,
\]
we also have the equation
\[
D^A D^{\ol B}\wt S^\Phi_{A\ol B}= D^{\ol B}D^A\wt S^\Phi_{A\ol B}
=(n-2m+1)(n-m)(n-2m)\mathcal{I}_\Phi.
\]
\end{rem}
\bigskip

By a direct computation, one has
\[
D^A \wt S^\Phi_{A\ol B}=W_{\ol B}^{\ol\b} F_{\ol\b}+(n-2m+1)\mathcal{I}_\Phi Z_{\ol B},
\]
with
\[
F_{\ol\b}=\frac{n-2m+1}{m}\bigl(m(n-m)S^\Phi_{\infty\ol\b}+m\nabla^\a S^\Phi_{\a\ol\b}-\nabla_{\ol\b}S^\Phi\bigr).
\]
Hence it suffices to prove $F_{\ol\b}=0$. When $m=n$ this follows from $S^{\Phi}_{(\a\ol\b)_0}=0$, so we assume that $2\le m\le n-1$. 

We fix a contact form $\theta$ and consider a $2m$-form 
\[
\varphi=\varphi^{(0)}+\theta\wedge(\varphi^{(1)}+\varphi^{(2)})
\]
on $M$, where 
\begin{align*}
\varphi^{(0)}&=\varphi^{(0)}_{\a_1\ol\b_1\cdots \a_m\ol\b_m}\theta^{\a_1}\wedge\theta^{\ol\b_1}\wedge\cdots\wedge\theta^{\a_m}\wedge\theta^{\ol\b_m} & &\in \wedge^{m, m}(T^{1, 0}M)^*,\\
\varphi^{(1)}&=\varphi^{(1)}_{\a\a_2\ol\b_2\cdots \a_m\ol\b_m}\theta^\a\wedge\theta^{\a_2}\wedge\theta^{\ol\b_2}\wedge\cdots\wedge\theta^{\a_m}\wedge\theta^{\ol\b_m} & &\in \wedge^{m, m-1}(T^{1,0}M)^*, \\
\varphi^{(2)}&=\varphi^{(2)}_{\ol\b \a_2\ol\b_2\cdots \a_m\ol\b_m}\theta^{\ol\b}\wedge\theta^{\a_2}\wedge\theta^{\ol\b_2}\wedge\cdots\wedge\theta^{\a_m}\wedge\theta^{\ol\b_m} & &\in \wedge^{m-1, m}(T^{1, 0}M)^*.
\end{align*}
We set
\begin{align*}
\tilde\varphi^{(0)}_{\a_1\ol\b_1}&:=h^{\a_2\ol\b_2}\cdots h^{\a_m\ol\b_m}\varphi^{(0)}_{\a_1\ol\b_1\cdots \a_m\ol\b_m}, & 
\hat\varphi^{(0)}&:=h^{\a_1\ol\b_1}\tilde\varphi^{(0)}_{\a_1\ol\b_1}, \\
\tilde\varphi^{(1)}_{\a \a_2\ol\b_2}&:=h^{\a_3\ol\b_3}\cdots h^{\a_m\ol\b_m}\varphi^{(1)}_{\a\a_2\ol\b_2\cdots \a_m\ol\b_m}, & 
\hat\varphi^{(1)}_{\a}&:=h^{\a_2\ol\b_2}\tilde\varphi^{(1)}_{\a \a_2\ol\b_2}, \\
\tilde\varphi^{(2)}_{\ol\b \a_2\ol\b_2}&:=h^{\a_3\ol\b_3}\cdots h^{\a_m\ol\b_m}\varphi^{(2)}_{\ol\b\a_2\ol\b_2\cdots \a_m\ol\b_m}, & 
\hat\varphi^{(2)}_{\ol\b}&:=h^{\a_2\ol\b_2}\tilde\varphi^{(2)}_{\ol\b \a_2\ol\b_2}.
\end{align*}
 
\begin{prop}
Suppose that $d\varphi=0$. Then we have
\begin{equation}\label{closedness}
m\nabla^{\ol\b}\tilde\varphi^{(0)}_{\a\ol\b}-\nabla_\a \hat\varphi^{(0)}-(n-m)i\hat\varphi^{(1)}_\a=0.
\end{equation}
\end{prop}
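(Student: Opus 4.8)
The plan is to expand $d\varphi=0$, isolate the component that is a purely horizontal form of bidegree $(m+1,m)$ (carrying no factor of $\theta$, with $m+1$ holomorphic and $m$ antiholomorphic legs), and then contract it down to a $(1,0)$-form by applying $\Lambda^m$. Writing $L=d\theta\wedge(\cdot)$ and recalling from \eqref{d-th} that $d\theta=ih_{\a\ol\b}\theta^\a\wedge\theta^{\ol\b}=\omega$, I compute
\[
d\varphi=d\varphi^{(0)}+L\varphi^{(1)}-\theta\wedge d\varphi^{(1)}+L\varphi^{(2)}-\theta\wedge d\varphi^{(2)}.
\]
Among these, only two terms contribute a purely horizontal form of bidegree $(m+1,m)$: the holomorphic part of $d\varphi^{(0)}$, which I denote $\partial\varphi^{(0)}$ and which comes from the $\nabla_\g$-derivatives together with the connection piece of the structure equation $d\theta^\b=\theta^\a\wedge\omega_\a{}^\b+A^\b{}_{\ol\g}\theta\wedge\theta^{\ol\g}$, and the term $L\varphi^{(1)}=d\theta\wedge\varphi^{(1)}$. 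The Tanaka--Webster torsion term and the $\nabla_T$-derivative in $d\varphi^{(0)}$, as well as the terms $\theta\wedge d\varphi^{(i)}$, all carry a factor of $\theta$ and drop out, while $L\varphi^{(2)}$ has bidegree $(m,m+1)$ and feeds only the conjugate equation. Hence the $(m+1,m)$-part of $d\varphi=0$ reads $\partial\varphi^{(0)}+L\varphi^{(1)}=0$.

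Next I apply $\Lambda^m$ to this identity. For the second term I use the commutation relation \eqref{Lambda-m-L}: since $\varphi^{(1)}\in\wedge^{m,m-1}V^*$ gives $n-p-q+m-1=n-m$ and since $\Lambda^m\varphi^{(1)}=0$, I obtain $\Lambda^m L\varphi^{(1)}=m(n-m)\Lambda^{m-1}\varphi^{(1)}=m(n-m)(-i)^{m-1}\hat\varphi^{(1)}_\a\theta^\a$, which is the source of the factors $(n-m)$ and $i$ in the final formula. For the first term I write $\partial\varphi^{(0)}$ in terms of the antisymmetrization $(m+1)\nabla_{[\g}\varphi^{(0)}_{\a_1\cdots\a_m]\ol\b_1\cdots\ol\b_m}$ over the holomorphic indices and contract all $m$ pairs $(\a_i,\ol\b_i)$ against $h^{\a_i\ol\b_i}$. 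The term in which $\nabla$ lands on the free index $\g$ produces $\nabla_\g\hat\varphi^{(0)}$, while the $m$ equal terms in which $\nabla$ lands on a contracted index $\a_i$ produce $-m\,\nabla^{\ol\b}\tilde\varphi^{(0)}_{\g\ol\b}$; because only a single covariant derivative is traced, no curvature terms arise from commuting derivatives.

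Collecting the two contributions and dividing by the common factor $(-i)^m$ yields exactly \eqref{closedness}, the surviving $i$ coming from $(-i)^{m-1}=i\,(-i)^m$. The main obstacle is purely the combinatorial bookkeeping: tracking the antisymmetrization factors so that the divergence term $\nabla^{\ol\b}\tilde\varphi^{(0)}_{\a\ol\b}$ and the trace-gradient term $\nabla_\a\hat\varphi^{(0)}$ emerge with the correct relative weight $m:1$, and pinning down the normalization of $\Lambda^m$ on $\wedge^{m+1,m}V^*$ so that it matches the weight $m(n-m)$ delivered by the Lefschetz commutator on the $\varphi^{(1)}$-term. Beyond the first structure equation and the $\mathfrak{sl}(2)$-relations \eqref{Lambda-L-m}--\eqref{Lambda-m-L}, no further geometric input is required.
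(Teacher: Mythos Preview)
Your argument is correct and is a genuinely different route from the paper's. The paper does not read off the $(m+1,m)$ horizontal component of $d\varphi$ directly; instead it first multiplies $\varphi$ by $L^{n-m-1}$ to push everything to near top degree, where the Lefschetz decompositions of $L^{n-m-1}\varphi^{(0)}$ and $L^{n-m-1}\varphi^{(1)}$ have only two surviving pieces each (computed via Proposition~\ref{diff-form}~(iv),~(v)). It then differentiates the resulting closed form modulo $\theta$, extracts the coefficient of $\theta^\g\wedge\theta^\a\wedge\theta^{\ol\b}\wedge(d\theta)^{n-2}$, and finally takes one more trace over $(\g,\ol\b)$. Your approach is the dual of this: rather than lifting by $L^{n-m-1}$ and reading off a component in a high-degree basis, you stay at the original degree and contract down by $\Lambda^m$. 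The $\mathfrak{sl}(2)$ commutator $[\Lambda^m,L]$ you invoke is exactly what the paper encodes through its Lefschetz decomposition formulas, so the two arguments are equivalent in content but organised differently. Your route is shorter and avoids the intermediate expressions in $\wedge^{n-1,n-1}$ and $\wedge^{n-1,n-2}$; the paper's route has the advantage that at near-top degree the Lefschetz decomposition is essentially forced, so one can read off everything from explicit basis elements without having to track the normalization of $\Lambda^m$ on $\wedge^{m+1,m}$.
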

\begin{proof}
We consider the closed form
\[
L^{n-m-1}\varphi=L^{n-m-1}\varphi^{(0)}+\theta\wedge(L^{n-m-1}\varphi^{(1)}+L^{n-m-1}\varphi^{(2)}),
\]
where $L=d\theta\wedge(\cdot)$. By the Lefschetz decomposition, we can write as
\begin{equation}\label{phi0}
L^{n-m-1}\varphi^{(0)}=L^{n-1}\psi+L^{n-2}\psi_1, \qquad \psi\in\wedge^{0, 0},\ \psi_1\in\wedge_0^{1,1}:={\rm Ker}\, \Lambda.
\end{equation}
First, we apply $\Lambda^{n-1}$ to \eqref{phi0}. By using \eqref{Lambda-k-L-m} and Proposition \ref{diff-form} (iv), we obtain
\[
\psi=\frac{(n-m)!}{n!m!}\Lambda^m \varphi^{(0)}.
\]
Next, we apply $\Lambda^{n-2}$ to \eqref{phi0} and use the same formulas to obtain 
\[
\psi_1=\frac{(n-m-1)!}{(n-2)!(m-1)!}\Bigl(\Lambda^{m-1}\varphi^{(0)}-\frac{1}{n}L\Lambda^{m}\varphi^{(0)}\Bigr).
\] 
Thus we have
\[
L^{n-m-1}\varphi^{(0)}
=\frac{(n-m-1)!}{(n-2)!(m-1)!}\Bigl(L^{n-2}\Lambda^{m-1}\varphi^{(0)}-
\frac{m-1}{m(n-1)}L^{n-1}\Lambda^m \varphi^{(0)}\Bigr).
\]
In a similar way, we can write
\[
L^{n-m-1}\varphi^{(1)}=L^{n-2}\eta_1+L^{n-3}\eta_2, \qquad \eta_1\in\wedge^{1,0},\ \eta_2\in\wedge_0^{2,1}
\]
and compute $\eta_1, \eta_2$ by using Proposition \ref{diff-form} (v). As a result, we obtain
\[
L^{n-m-1}\varphi^{(1)}=\frac{(n-m-1)!}{(n-3)!(m-2)!}\Bigl(L^{n-3}\Lambda^{m-2}\varphi^{(1)}-\frac{m-2}{(n-2)(m-1)}L^{n-2}\Lambda^{m-1}\varphi^{(1)}\Bigr).
\]
The same formula holds for $\varphi^{(2)}$. It follows that
\begin{align*}
\frac{(n-2)!(m-1)!}{(n-m-1)!}L^{n-m-1}\varphi
&=L^{n-2}\Lambda^{m-1}\varphi^{(0)}-\frac{m-1}{m(n-1)}L^{n-1}\Lambda^{m}\varphi^{(0)} \\
&\quad +\theta\wedge\bigl((n-2)(m-1)L^{n-3}\Lambda^{m-2}(\varphi^{(1)}+\varphi^{(2)}) \\
&\quad\quad\quad\quad-(m-2)L^{n-2}\Lambda^{m-1}(\varphi^{(1)}+\varphi^{(2)})\bigr).
\end{align*}
The right-hand side is a non-zero multiple of 
\begin{align*}
&\tilde\varphi^{(0)}_{\a\ol\b}\theta^\a\wedge\theta^{\ol\b}\wedge(d\theta)^{n-2}+\frac{m-1}{m(n-1)}i\hat\varphi^{(0)}(d\theta)^{n-1} \\
&\quad  +\theta\wedge\Bigl(\frac{(n-2)(m-1)}{2m}i(\tilde\varphi^{(1)}_{\g\a\ol\b}\theta^\g\wedge\theta^\a\wedge\theta^{\ol\b}+\tilde\varphi^{(2)}_{\ol\g\a\ol\b}\theta^{\ol\g}\wedge\theta^\a\wedge\theta^{\ol\b})\wedge(d\theta)^{n-3} \\
&\qquad\qquad\ -\frac{m-2}{m}(\hat\varphi^{(1)}_\a\theta^\a+\hat\varphi^{(2)}_{\ol\a}\theta^{\ol\a})\wedge(d\theta)^{n-2}\Bigr)
\end{align*}
and this must a be closed form. We differentiate this form, computing modulo $\theta$, and look at the coefficient of $\theta^{\g}\wedge\theta^\a\wedge\theta^{\ol\b}\wedge(d\theta)^{n-2}$. Then we have
\[
\nabla_{[\g}\tilde\varphi^{(0)}_{\a]\ol\b}-\frac{m-1}{m(n-1)}\nabla_{[\g}\hat\varphi^{(0)}h_{\a]\ol\b} +\frac{(n-2)(m-1)}{2m}i\tilde\varphi^{(1)}_{\g\a\ol\b}-\frac{m-2}{m}i\hat\varphi^{(1)}_{[\g}h_{\a]\ol\b}=0.
\]
Taking the trace with respect to $(\g, \ol\b)$, we obtain \eqref{closedness}.
\end{proof}

Now we apply the proposition to the closed form $\varphi=\Phi(\Omega)$. Since 
\[
\tilde\varphi^{(0)}_{\a\ol\b}=S^{\Phi}_{\a\ol\b}, \quad \hat\varphi^{(0)}=S^{\Phi},\quad 
\hat\varphi^{(1)}_{\a}=imS^{\Phi}_{\a\ol\infty},
\]
the equation \eqref{closedness} implies $F_{\ol\a}=0$.

\subsection{Explicit formulas}
We will give some explicit formulas for $X^\Phi_\a$ and $\mathcal{I}'_\Phi$. We note that 
by $\Omega_{\a\ol\b A}{}^A=0$ we have $\mathcal{I}'_\Phi=0$ when $\Phi$ is factored by $T_1(=c_1)$. Thus, $\Phi$ should be considered modulo $c_1$. For $\Phi=T_2\ (\equiv -2c_2\ {\rm mod}\ c_1)$, we have
\begin{align*}
X^{T_2}_\a
&=-\frac{i}{2}S_{\a\ol\b\g\ol\mu}V^{\ol\b\g\ol\mu}+\frac{1}{4n}\nabla_\a|S_{\g\ol\d\mu\ol\nu}|^2, \\
\mathcal{I}'_{T_2}
&=-\frac{1}{2}|V_{\a\ol\b\g}|^2+\frac{1}{n(n-4)}P|S_{\a\ol\b\g\ol\mu}|^2+
\frac{1}{4n(n-4)}\Delta_b |S_{\a\ol\b\g\ol\mu}|^2. 
\end{align*}
When $n=2$, these coincide with $\frac{1}{2}X_\a$ and $-\frac{1}{2}\mathcal{I}'$ of Case--Gover \cite{CG}. 

For $\Phi=T_3\ (\equiv 3c_3\ {\rm mod}\ c_1)$, we have
\begin{align*}
X^{T_3}_\a
&=\frac{1}{6}(-i V_{\g_1}{}^{\g_2}{}_\a S_{\g_2}{}^{\g_3}{}_\mu{}^\nu S_{\g_3}{}^{\g_1}{}_\nu{}^\mu+i S_{\g_1}{}^{\g_2}{}_{\a}{}^\nu V_{\g_2}{}^{\g_3}{}_{\mu} S_{\g_3}{}^{\g_1}{}_\nu{}^\mu \\
&\quad\quad +i S_{\g_1}{}^{\g_2}{}_{\a}{}^\nu S_{\g_2}{}^{\g_3}{}_\nu{}^\mu V_{\g_3}{}^{\g_1}{}_{\mu }) -\frac{1}{3n}\nabla_\a S^{T_3}\\
\mathcal{I}'_{T_3}
&=\frac{1}{6}(-U_{\g_1}{}^{\g_2}S_{\g_2}{}^{\g_3}{}_\mu{}^\nu S_{\g_3}{}^{\g_1}{}_\nu{}^\mu
+V^\mu{}_{\g_1}{}^{\g_2}V_{\g_2}{}^{\g_3}{}_{\nu}S_{\g_3}{}^{\g_1}{}_\mu{}^\nu \\
&\quad\quad\ +V^\mu{}_{\g_1}{}^{\g_2}S_{\g_2}{}^{\g_3}{}_\mu{}^\nu V_{\g_3}{}^{\g_1}{}_{\nu}) 
-\frac{2}{n(n-6)}P S^{T_3} -\frac{1}{3n(n-6)}\Delta_b S^{T_3},
\end{align*}
where
\[
S^{T_3}=\frac{1}{6}(S_{\g_1}{}^{\g_2}{}_{\a}{}^\nu S_{\g_2}{}^{\g_3}{}_\nu{}^\mu S_{\g_3}{}^{\g_1}{}_{\mu}{}^\a+S_{\g_1}{}^{\g_2}{}_{\a}{}^\nu S_{\g_2}{}^{\g_3}{}_{\mu}{}^{\a} S_{\g_3}{}^{\g_1}{}_\nu{}^\mu).
\]
In general, when the degree of $\Phi$ is $m$, they are roughly of the following forms:
\begin{align*}
X^\Phi_\a&=(V\otimes S^{\otimes(m-1)})_\a+\nabla_\a S^{\otimes m}, \\
\mathcal{I}'_\Phi&=U\otimes S^{\otimes(m-1)}+V^{\otimes 2}\otimes S^{\otimes(m-2)}
+PS^{\otimes m}+\Delta_b S^{\otimes m}.
\end{align*}

\subsection{Computations for circle bundles over K\"ahler--Einstein manifolds}
We will compute the total $\mathcal{I}'_{\Phi}$-curvatures for the following CR manifold: 

Let $Y\subset\mathbb{CP}^{n+r}$ be a smooth projective variety given by the common zero locus $\{f_1=f_2=\cdots=f_r=0\}$, where $f_i$ is a homogeneous polynomial of degree $d_i$. We assume that $-n-r-1+\sum_{i=1}^{r} d_i>0$ so that the canonical bundle 
\[
K_Y=\mathcal{O}(-n-r-1+\textstyle\sum_{i=1}^{r}d_i)|_Y
\]
is positive. We also assume that $r>n$. By the Aubin--Yau theorem, there exists a K\"ahler metric $\bar g$ on $Y$ which satisfies 
the Einstein equation
\[
{\rm Ric}(\bar g)=-\bar g.
\]
We let $h_{K_Y}=(\det \bar g)^{-1}$ be the induced hermitian metric on $K_Y$ and consider the circle bundle
\[
M:=\{v\in K_Y\ |\ h_{K_Y}(v, v)=1\}.
\]
We take $\rho:=\log h_{K_Y}(v, v)$ as a defining function, and let $\{\pa\rho, \theta^\a\}$ be a $(1, 0)$-coframe around a point on $M$, where $\{\theta^\a\}$ is (the pull-back of) a $(1, 0)$-coframe on $Y$. Then, $\{\theta^\a\}$ gives a coframe for $T^{1, 0}M$ and we have
\begin{align*}
-\pa\ol\pa\rho&=\pa\ol\pa\log\det \bar g\\
&=-{\rm Ric}(\bar g)_{\a\ol\b}\theta^\a\wedge\theta^{\ol\b} \\
&=\bar g_{\a\ol\b}\theta^\a\wedge\theta^{\ol\b}.
\end{align*}
Thus, $M$ is a strictly pseudoconvex CR manifold with the Levi form for $\theta:=(d^c\rho)|_{TM}$ being $\bar g_{\a\ol\b}$. Moreover, the Tanaka--Webster connection form is equal to (the pull-back of) the Levi-Civita connection form of $\bar g$ and the Tanaka--Webster torsion vanishes: $A_{\a\b}=0$. Consequently, $\theta$ is a pseudo-Einstein contact form and we have $P=-n/(2(n+1))$, $V_{\a\ol\b\g}=0$, $U_{\a\ol\b}=0$, and
\begin{equation}\label{S-B}
S_{\a\ol\b\g\ol\mu}=R^{\bar g}_{\a\ol\b\g\ol\mu}+\frac{1}{n+1}
(\bar g_{\a\ol\b}\bar g_{\g\ol\mu}+\bar g_{\g\ol\b}\bar g_{\a\ol\mu})=B^{\bar g}_{\a\ol\b\g\ol\mu},
\end{equation}
where $B^{\bar g}_{\a\ol\b\g\ol\mu}$ is the Bochner tensor of $\bar g$. 

Let $\zeta=|\zeta|e^{is}$ be the fiber coordinate of $K_Y$ with respect to the local section 
$\theta^1\wedge\cdots\wedge\theta^n$. Then we have
\begin{equation}\label{volume}
\begin{aligned}
\theta\wedge(d\theta)^n&={\rm Re}(i\pa\rho)\wedge(-i\pa\ol\pa\rho)^n|_{TM} \\
&=ds\wedge\omega_{\bar g}^n,
\end{aligned}
\end{equation}
where $\omega_{\bar g}:=i\bar g_{\a\ol\b}\theta^\a\wedge\theta^{\ol\b}$ is the K\"ahler form of $\bar g$.

The integral of $\mathcal{I}'_\Phi$ over $M$ for an invariant polynomial $\Phi=T_{m_1}\cdots T_{m_k}$ of degree $n$ is computed as 
\[
\int_M \mathcal{I}'_\Phi=\int_M \frac{2}{n^2} PS^\Phi \theta\wedge(d\theta)^n
=-\frac{1}{n(n+1)}\int_M S^\Phi \theta\wedge(d\theta)^n.
\]
By \eqref{S-B} and \eqref{volume}, we have
\begin{align*}
\int_M S^\Phi \theta\wedge(d\theta)^n
&=2\pi\int_Y S^\Phi \omega_{\bar g}^n \\ 
&=2\pi\int_Y i^n S^{(m_1)}_{[\a_1\ol\b_1\cdots} \cdots S^{(m_k)}_{\quad \cdots\a_n\ol\b_n]}
\theta^{\a_1}\wedge\theta^{\ol\b_1}\wedge\cdots\wedge\theta^{\a_n}\wedge\theta^{\ol\b_n} \\
&=2\pi\int _Y \Phi(B^{\bar g}),
\end{align*}
where $B^{\bar g}:=B^{\bar g}{}_{\a}{}^\b{}_{\g\ol\mu}\theta^\g\wedge\theta^{\ol\mu}$.
Since $\bar g$ is K\"ahler--Einstein, $\Phi(B^{\bar g})$ can be rewritten to a usual characteristic form $\wt\Phi(R^{\bar g})$ as in the computation of renormalized characteristic forms. Hence, the total $\mathcal{I}'_\Phi$-curvature is represented as a characteristic number of $T^{1, 0}Y$:
\[
\int_M \mathcal{I}'_\Phi=-\frac{2\pi}{n(n+1)}\langle \wt\Phi(T^{1, 0}Y), [Y]\rangle.
\]

Let us consider the case $\Phi=c_{i_1}\cdots c_{i_k}\ (2\le i_l \le n,\ i_1+\cdots +i_k=n)$. We have 
$\wt\Phi=\wt c_{i_1}\cdots \wt c_{i_k}$, where each $\wt c_i$ is written in the form
\[
\wt c_i=c_i+\sum_{j=1}^{i} c_1^{\, j} \phi_j
\]
with invariant polynomials $\phi_j$ of degree $i-j$. The Chern classes of $T^{1, 0}Y$ can be described in terms of the data $d_1, \dots, d_r$ as follows (see \cite{Hz}):

If we set $x:= c_1(\mathcal{O}(1)|_Y)\in H^2(Y; \mathbb{R})$, then the total Chern class $c(T^{1, 0}Y)=1+c_1(T^{1. 0}Y)+\cdots+ c_n(T^{1, 0}Y)$ is given by
\begin{align*}
c(T^{1, 0}Y)&=(1+x)^{n+r+1}(1+d_1x)^{-1}\cdots(1+d_r x)^{-1} \\
&=(1+x)^{n+r+1}(1+\sigma_1 x+\cdots+\sigma_r x^r)^{-1},
\end{align*}
where $\sigma_j$ is the elementary symmetric polynomial of degree $j$, and we expand $(1+d_j x)^{-1}$ and $(1+\sigma_1 x+\cdots+\sigma_r x^r)^{-1}$ as a power series in $x$, which is actually a finite sum.
It follows that $c_i(T^{1, 0}Y)$ is $x^i$ times a symmetric polynomial in $d_1, \dots, d_r$ of degree $i$, and its homogeneous part of degree $i$ is of the form
\[
-\sigma_i+({\rm higher\ order\ terms\ in}\ \sigma_j).
\]
Therefore, by using the formula
\[
\langle x^n, [Y]\rangle=d_1\cdots d_r=\sigma_r,
\]
we obtain that $\langle \wt\Phi(T^{1, 0}Y), [Y]\rangle$ is a symmetric polynomial in $d_1, \dots, d_r$ of degree $n+r$ whose homogeneous part of degree $n+r$ is of the form
\begin{equation*}
(-1)^k\sigma_{i_1}\cdots\sigma_{i_k}\sigma_r+({\rm higher\ order\ terms\ in}\ \sigma_j).
\end{equation*}
By the assumption $r> n$, the polynomials $\sigma_i\ (2\le i\le n, i=r)$ are algebraically independent. Hence we have the following proposition:
\begin{prop}
Let $\Phi\neq0$ be an invariant polynomial of degree $n$ which can be written as a polynomial in $c_2, \dots, c_n$. Then the total $\mathcal{I}'_\Phi$-curvature $\overline{\mathcal{I}}'_\Phi$ of the circle bundle $M$ is a non-zero symmetric polynomial in $d_1, \cdots, d_r$ of degree $n+r$. Moreover, 
if $\Phi_1\neq\Phi_2$ then $\overline{\mathcal{I}}'_{\Phi_1}\neq\overline{\mathcal{I}}'_{\Phi_2}$ as polynomials.
\end{prop}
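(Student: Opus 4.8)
The plan is to reduce everything to the linearity of the assignment $\Phi\mapsto\overline{\mathcal{I}}'_\Phi$ and to prove that it is injective on the space of degree-$n$ invariant polynomials spanned by monomials in $c_2,\dots,c_n$. By the formula $\overline{\mathcal{I}}'_\Phi=-\frac{2\pi}{n(n+1)}\langle\wt\Phi(T^{1,0}Y),[Y]\rangle$ derived above, combined with the linearity of $\Phi\mapsto\wt\Phi$ (since $\wt\Phi$ is the unique invariant polynomial with $\Phi(W)=\wt\Phi(\Psi)$, and $W$ is an affine-linear function of $\Psi$) and of $\wt\Phi\mapsto\langle\wt\Phi(T^{1,0}Y),[Y]\rangle$, the map $\Phi\mapsto\overline{\mathcal{I}}'_\Phi$ is $\R$-linear. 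Hence the ``moreover'' clause follows at once by applying the first clause to $\Phi_1-\Phi_2$, and it suffices to show that $\Phi\neq0$ implies that $\overline{\mathcal{I}}'_\Phi$ is a nonzero symmetric polynomial of degree exactly $n+r$.

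First I would write $\Phi=\sum_\lambda a_\lambda c_\lambda$, where $c_\lambda=c_{\lambda_1}\cdots c_{\lambda_{k(\lambda)}}$ ranges over the degree-$n$ monomials in $c_2,\dots,c_n$ (so $\lambda_i\in\{2,\dots,n\}$ and $\sum_i\lambda_i=n$, with $\lambda\mapsto$ multiset a bijection onto such monomials). By the computation above, the degree-$(n+r)$ homogeneous part in the $d$'s of $\langle\wt{c_\lambda}(T^{1,0}Y),[Y]\rangle$ is $(-1)^{k(\lambda)}\sigma_{\lambda_1}\cdots\sigma_{\lambda_{k(\lambda)}}\sigma_r$ plus higher-order terms, where ``higher order'' means $\sigma$-monomials of greater \emph{length} (number of $\sigma$-factors counted with multiplicity). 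I would make this precise: since $\sigma_1,\dots,\sigma_r$ are algebraically independent, $\R[\sigma_1,\dots,\sigma_r]$ is a genuine polynomial ring carrying, besides the weight grading (in which $\sigma_j$ has degree $j$, matching the degree in the $d$'s), the length grading in which every $\sigma_j$ has degree $1$. The leading monomial $\sigma_{\lambda_1}\cdots\sigma_{\lambda_{k(\lambda)}}\sigma_r$ has length $k(\lambda)+1$, while all corrections have length $\ge k(\lambda)+2$; this is because each factor $\wt{c_{\lambda_i}}(T^{1,0}Y)$ contributes $-\sigma_{\lambda_i}$ (length $1$) plus length-$\ge2$ terms at top weight, and the $c_1$-corrections entering $\wt c$ only raise the length.

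Then I set $k_0:=\min\{k(\lambda):a_\lambda\neq0\}$ and isolate, inside the degree-$(n+r)$ part of $\langle\wt\Phi(T^{1,0}Y),[Y]\rangle$, the component of smallest occurring length $k_0+1$. Only the $c_\lambda$ with $k(\lambda)=k_0$ contribute to it (those with $k(\lambda)>k_0$ contribute only length $\ge k_0+2$), and each contributes exactly its leading monomial, so this component equals $(-1)^{k_0}\sigma_r\sum_{k(\lambda)=k_0}a_\lambda\,\sigma_{\lambda_1}\cdots\sigma_{\lambda_{k_0}}$. Since $r>n$, the polynomials $\sigma_2,\dots,\sigma_n,\sigma_r$ are algebraically independent, so the monomials $\sigma_{\lambda_1}\cdots\sigma_{\lambda_{k_0}}\sigma_r$ (distinct because distinct $c_\lambda$ give distinct multisets) are linearly independent; as some $a_\lambda\neq0$ with $k(\lambda)=k_0$, this component is nonzero. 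Therefore the top-weight part of $\langle\wt\Phi(T^{1,0}Y),[Y]\rangle$ is a nonzero symmetric polynomial of degree $n+r$, and the proposition follows.

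The main obstacle is exactly the potential cancellation among the leading terms $(-1)^{k(\lambda)}\sigma_{\lambda_1}\cdots\sigma_{\lambda_{k(\lambda)}}\sigma_r$ of the different monomials $c_\lambda$: the earlier computation only controls a single monomial, so the real work is to organize the sum so that no cancellation is possible. Introducing the length grading and extracting the minimal-length component is what resolves this, as it cleanly separates the monomials with the fewest factors from all higher-order noise and reduces non-vanishing to the algebraic independence of $\sigma_2,\dots,\sigma_n,\sigma_r$ guaranteed by the hypothesis $r>n$.
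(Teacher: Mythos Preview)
Your argument is correct and follows the same route as the paper: you identify the top-degree part of $\langle\wt\Phi(T^{1,0}Y),[Y]\rangle$ as $(-1)^{k}\sigma_{\lambda_1}\cdots\sigma_{\lambda_k}\sigma_r$ modulo higher-length $\sigma$-monomials and then use algebraic independence of $\sigma_2,\dots,\sigma_n,\sigma_r$ (from $r>n$) to preclude cancellation. The paper leaves the passage from a single monomial $c_{i_1}\cdots c_{i_k}$ to a general linear combination implicit in the phrase ``higher order terms in $\sigma_j$''; your introduction of the length grading and the extraction of the minimal-length component at $k_0+1$ makes this step explicit and rigorous, but it is the same underlying filtered-injectivity argument.
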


\begin{rem}\label{general-case}
By this proposition and Theorem \ref{fp-I-prime} below, the CR invariants given by Theorem \ref{integral-inv} are non-trivial when $m=n$. Takeuchi \cite{T2} computed the CR invariants for general $\Phi$ for Sasakian $\eta$-Einstein manifolds, i.e., CR manifolds which admit pseudo-Einstein  contact forms whose Tanaka--Webster torsion tensors vanish. Applying his formula to the circle bundle above, we obtain an expression of the invariant for 
\[
\Phi=c_{i_1}\cdots c_{i_k}\ (2\le i_l \le n,\ i_1+\cdots +i_k=m)
\] 
as a symmetric polynomial of degree $n+r$ in $d_1, \dots, d_r$. Up to a non-zero constant multiple, its homogeneous part of degree $n+r$ is given by 
\[
\sigma_1^{n-m}\sigma_{i_1}\cdots\sigma_{i_k}\sigma_r+({\rm higher\ order\ terms\ in}\ \sigma_j).
\]
This indicates that the invariants are non-trivial and mutually independent. 
\end{rem}
\section{The total $\mathcal{I}'_\Phi$-curvatures and renormalized characteristic forms}\label{relation}

 \subsection{Ambient description of the CR tractor}\label{ambient-tractor}
We will prove that the integral of $\mathcal{I}'_\Phi$ agrees with a multiple of the global CR invariant given by Theorem \ref{integral-inv}. Since the $\mathcal{I}'_\Phi$-curvature is defined via CR tractor calculus, we first need to establish the (local) correspondence between the ambient metric and the CR tractor bundles. In {\cite[\S 5]{CG}}, the correspondence is described by using Fefferman's conformal structure as an intermediate geometry.  Here we directly give the explicit correspondence in the framework of Graham--Lee. The direct construction of CR tractors by the ambient metric is also sketched in {\cite[\S 3.4]{Cap}}.

We consider the setting of  \S \ref{spc}: Let $\Omega$ be a strictly pseudoconvex domain with the boundary $M$ in a complex manifold $X$ of dimension $n+1$. Let $\wt g=-i\pa\ol\pa\br$ be the ambient metric on $\wt X=K_X\setminus\{0\}$ for a fixed Fefferman defining density $\br\in\wt\calE(1)$, and $\wt\nabla$ its Levi-Civita connection. Recall that we defined the dilation $\d_\lambda\ (\lambda\in\C^*)$ on $\wt X$ by $\d_\lambda(u):=\lambda^{n+2}u$. 

We work locally on a neighborhood of a boundary point and fix an $(n+2)$-nd root $\wt X_0:=\wt X^{\frac{1}{n+2}}$ of $\wt X$. We define the dilation $\d^0_\lambda$ on $\wt X_0$ by $\d^0_\lambda(u):=\lambda u$ so that the covering map $\tilde\pi: \wt X_0\rightarrow \wt X$ is $\mathbb{C}^*$-equivariant. By the pull-back via $\tilde\pi$, a density $f\in\wt\calE(w)$ can be identified with a function on $\wt X_0$ which satisfies $(\d^0_\lambda)^*f=|\lambda|^{2w}f$. We also pull back the ambient metric to $\wt X_0$ and denote the metric and its Levi-Civita connection by the same symbols $\wt g$, $\wt\nabla$. 

Let $Z\in T^{1, 0}\wt X_0$ be the $(1, 0)$-part of the infinitesimal generator of $\d^0$. Then 
it can be written as $Z=z^0(\partial/\partial z^0)$, where $z^0$ is the fiber coordinate associated to a section $(dz^1\wedge\cdots \wedge dz^{n+1})^{\frac{1}{n+2}}$ with local coordinates $(z^1, \dots, z^{n+1})$. Since $z^0$ corresponds via $\tilde\pi$ to the local fiber coordinate of $\wt X$ defined in \S \ref{BE-conn}, the formulas \eqref{ambient-conn-coordinate}, \eqref{ambient-curvature-coordinate}, \eqref{omega-rho-0} for the connection and the curvature forms of $\wt\nabla$ hold true on $\wt X_0$ as well.

From $Z^A \wt g_{A\ol B}=-\wt\nabla_{\ol B}\br$, we have
\begin{equation}\label{Z-R}
\wt\nabla_A Z^B=\d_A{}^B, \quad Z^A \wt R_{A\ol B C\ol E}=0,
\end{equation}
where $\wt R$ is the curvature tensor of $\wt g$ on $\wt X_0$. A vector field $V\in \mathbb{C}T\wt X_0$ is said to be homogeneous of degree $(w, w')\in \mathbb{C}^2\ (w-w'\in \mathbb{Z})$ if it satisfies
\[
(\d^0_\lambda)_* V=\lambda^{-w}\bar\lambda^{-w'}V
\]
for all $\lambda\in\mathbb{C}^*$. 
The homogeneity is also characterized by the Lie derivatives:
\[
\mathcal{L}_Z V(=[Z, V])=w V, \quad \mathcal{L}_{\ol Z}V(=[\ol Z, V])=w' V.
\]
The following lemma relates the homogeneity to the covariant derivatives:
\begin{lem}
A $(1, 0)$-vector field $V\in T^{1, 0}\wt X_0$ satisfies
\[
\wt\nabla_Z V=\wt\nabla_{\ol Z} V=0
\]
if and only if $V$ is homogeneous of degree $(-1, 0)$.
\end{lem}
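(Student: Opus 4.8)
The plan is to reduce both covariant-derivative conditions to homogeneity (Lie-derivative) conditions, exploiting that $\wt\nabla$ is the torsion-free Levi--Civita connection of the K\"ahler metric $\wt g$ together with the identities \eqref{Z-R}. First I would unpack what \eqref{Z-R} gives: the relation $\wt\nabla_A Z^B=\d_A{}^B$ says precisely that $\wt\nabla_W Z=W$ for every $(1,0)$-vector field $W$, and in particular $\wt\nabla_V Z=V$ for the field $V$ in question. This is the only input needed to handle the $Z$-direction.

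Next I would treat the $\ol Z$-direction, where the key fact is $\wt\nabla_V\ol Z=0$ for every $(1,0)$-field $V$. This is not immediate from \eqref{Z-R}, which only controls the holomorphic derivative of the $(1,0)$-field $Z$. Instead I would argue that, since $Z=z^0(\pa/\pa z^0)$ is a holomorphic vector field and $\wt g$ is K\"ahler (so the mixed Christoffel symbols vanish and $\wt\nabla$ preserves the type decomposition), one has $\wt\nabla_{\ol A}Z^B=\pa_{\ol A}Z^B=0$; conjugating yields $\wt\nabla_A\ol Z^{\,\ol B}=0$, which is exactly $\wt\nabla_V\ol Z=0$. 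With these two facts the torsion-free identity $\wt\nabla_X Y-\wt\nabla_Y X=[X,Y]$ finishes the computation: taking $X=Z$, $Y=V$ gives $\wt\nabla_Z V=[Z,V]+\wt\nabla_V Z=\mathcal{L}_Z V+V$, while taking $X=\ol Z$, $Y=V$ gives $\wt\nabla_{\ol Z}V=[\ol Z,V]=\mathcal{L}_{\ol Z}V$.

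From these two formulas the lemma follows in both directions at once: $\wt\nabla_Z V=0$ is equivalent to $\mathcal{L}_Z V=-V$, and $\wt\nabla_{\ol Z}V=0$ is equivalent to $\mathcal{L}_{\ol Z}V=0$, so the pair of vanishing conditions is equivalent, via the stated Lie-derivative characterization of homogeneity, to $V$ being homogeneous of degree $(-1,0)$; since each equivalence is reversible, no separate converse argument is required. The computation is short, and I expect the only genuinely substantive step to be the vanishing $\wt\nabla_V\ol Z=0$: I would be careful to invoke the holomorphicity of $Z$ and the K\"ahler property explicitly there, since it is precisely the decoupling of the holomorphic and antiholomorphic directional derivatives provided by the vanishing mixed Christoffel symbols that makes both equivalences clean.
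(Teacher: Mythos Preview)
Your proof is correct and follows essentially the same route as the paper: the paper also derives $\wt\nabla_V Z=V$ and $\wt\nabla_V\ol Z=0$ from \eqref{Z-R} and the holomorphicity of $Z$, then uses torsion-freeness to obtain $\mathcal{L}_Z V=\wt\nabla_Z V-V$ and $\mathcal{L}_{\ol Z}V=\wt\nabla_{\ol Z}V$, from which the equivalence is immediate. Your write-up is slightly more explicit about why $\wt\nabla_V\ol Z=0$ (via the vanishing mixed Christoffel symbols of a K\"ahler metric), but the argument is the same.
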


\begin{proof}
By using \eqref{Z-R} and the fact that $Z$ is a holomorphic vector field, we have
\begin{equation}\label{nabla-Z}
\wt\nabla_V Z=V, \quad \wt\nabla_{V} \ol Z=0
\end{equation}
for any $(1, 0)$-vector $V$. Thus we obtain
\[
\mathcal{L}_Z V=\wt\nabla_Z V-V, \quad \mathcal{L}_{\ol Z} V=\wt\nabla_{\ol Z} V,
\]
from which the lemma follows.
\end{proof}
We consider the set of all $(1, 0)$-vector fields along $\mathcal{N}_0=\{\br=0\}\subset\wt X_0$ which are parallel in the fiber direction, or equivalently homogeneous of degree $(-1, 0)$:
\begin{align*}
\wt {\mathcal{T}}^{1, 0}&:=\{V\in \Gamma(T^{1, 0}{\wt X_0}|_{\mathcal{N}_0})\ |\ 
\wt\nabla_Z V=\wt\nabla_{\ol Z}V=0\} \\
&=\{V\in \Gamma(T^{1, 0}{\wt X_0}|_{\mathcal{N}_0})\ |\ V\ {\rm is}\ {\rm homogeneous}\ {\rm of }\ {\rm degree}\ (-1, 0)\}.
\end{align*}
A vector field $V\in\wt {\mathcal{T}}^{1, 0}$ can be identified with a section of the rank $n+2$ complex vector bundle over $M$ defined by $\mathcal{T}^{1, 0}:=(T^{1, 0}{\wt X_0}|_{\mathcal{N}_0})/
\mathbb{C}^*$, where $\lambda\in\mathbb{C}^*$ acts as $V\mapsto \lambda^{-1}(\delta^0_{\lambda})_* V$.  The ambient metric $\wt g$ defines a hermitian metric on $\mathcal{T}^{1, 0}$ since $\wt g(V, \ol W)$ is homogeneous of degree $0$ for $V, W\in \wt{\mathcal{T}}^{1, 0}$. We will define a connection on $\mathcal{T}^{1, 0}$ and show that these agree with the CR tractor bundle and the tractor connection.

For a vector field $\nu\in TM$, let $\tilde\nu\in T\mathcal{N}_0\subset T\wt X_0$ be a lift to $\mathcal{N}_0$ which is homogeneous of degree $(0, 0)$. Then, for $V\in \wt {\mathcal{T}}^{1, 0}$ the derivative $\wt \nabla_{\tilde \nu}V$ is also parallel along the fibers since
\[
\wt\nabla_{Z}\wt\nabla_{\tilde \nu}V=\wt\nabla_{\tilde\nu}\wt\nabla_{Z}V+
\wt\nabla_{[Z, \tilde\nu]} V+\wt R(Z, \tilde \nu)V=0
\]
by \eqref{Z-R} and similarly $\wt\nabla_{\ol Z}\wt\nabla_{\tilde \nu}V=0$. Moreover, 
$\wt \nabla_{\tilde \nu}V$ does not depend on the choice of lift $\wt \nu$. Thus we can define a  linear connection $\nabla^{\mathcal{T}}$ on $\mathcal{T}^{1, 0}$ by 
\[
\nabla^{\mathcal{T}}_{\nu} V:=\wt\nabla_{\tilde\nu} V.
\]
We also define a connection on $\mathcal{T}^{1, 0}(w, w'):=\mathcal{T}^{1, 0}\otimes \calE(w, w')$ by coupling $\nabla^{\mathcal{T}}$ with the Tanaka--Webster connection on $\mathcal{E}(w, w')$ determined by a choice of CR scale. We also denote the connection by $\nabla^{\mathcal{T}}$. The relation between this coupled connection and the ambient connection $\wt\nabla$ is described as follows. 

Let $\sigma:=(dz^1\wedge\cdots\wedge dz^{n+1})^{\frac{-1}{n+2}}\in\calE(1, 0)$ be the local pseudo-Einstein CR scale associated to the coordinates. Then, a section $V\in \mathcal{T}^{1, 0}(w, w')$ is identified with the vector field 
\[
\wt V:=(z^0)^w(z^{\ol 0})^{w'} \wt{V}_0 \in T^{1, 0}{\wt X_0}|_{\mathcal{N}_0},
\]
which is homogeneous of degree $(w-1, w')$. Here, $\wt{V}_0\in\wt{\mathcal{T}}^{1, 0}$ is the vector field corresponding to the section $V_0=\sigma^{-w}\bar{\sigma}^{-w'}V\in\mathcal{T}^{1, 0}$.
For $\nu\in TM$, we take the lift $\tilde\nu$ so that it satisfies 
\[
dz^0(\tilde \nu)=dz^{\ol 0}(\tilde\nu)=0,
\]
and call it the {\it horizontal lift} with respect to $(z^1, \dots, z^{n+1})$.
Then the vector field $\wt\nabla_{\tilde\nu}\wt V$ is homogeneous of degree $(w-1, w')$, so it defines a section of $\mathcal{T}^{1, 0}(w, w')$. By definition, this agrees with $\nabla^{\mathcal{T}}_\nu V$ when $w=w'=0$. In general cases however, the differentiations in the $T$-direction may have difference:

\begin{prop}\label{tractor-conn-lift}
Let $\wt V\in \Gamma(T^{1, 0}{\wt X_0}|_{\mathcal{N}_0})$ be the homogeneous vector field of degree $(w-1, w')$ corresponding to a section $V \in\mathcal{T}^{1, 0}(w, w')$. Let $\nu\in T^{1, 0}M\oplus T^{0, 1}M$ and let $T$ be the Reeb vector field for the CR scale $|\sigma|^2$. Then we have 
\begin{align*}
\wt\nabla_{Z} \wt V&=w V, \\
\wt\nabla_{\wt\nu} \wt V&=\nabla^{\mathcal{T}}_{\nu} V, \\
\wt\nabla_{\wt T} \wt V&=\nabla^{\mathcal{T}}_T V+\frac{2(n+1)}{n(n+2)}i(w-w')PV,
\end{align*}
where $\wt\nu$ and $\wt T$ are the horizontal lifts with respect to $(z^1, \dots, z^{n+1})$.
\end{prop}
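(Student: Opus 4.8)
The plan is to establish all three identities from a single Leibniz computation for $\wt\nabla$ applied to the factorization $\wt V=(z^0)^w(z^{\ol 0})^{w'}\wt V_0$, where $\wt V_0\in\wt{\mathcal{T}}^{1,0}$ is the homogeneous field of degree $(-1,0)$ attached to $V_0=\sigma^{-w}\ol\sigma^{-w'}V$. For any field $\xi$ tangent to $\mathcal{N}_0$,
\[
\wt\nabla_\xi\wt V=\xi\bigl((z^0)^w(z^{\ol 0})^{w'}\bigr)\,\wt V_0+(z^0)^w(z^{\ol 0})^{w'}\,\wt\nabla_\xi\wt V_0,
\]
so everything reduces to how $\xi$ acts on the weight factor and to the unweighted derivative $\wt\nabla_\xi\wt V_0$. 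Since $\sigma\leftrightarrow z^0$, multiplication by $(z^0)^w(z^{\ol 0})^{w'}$ is exactly the canonical isomorphism $\mathcal{T}^{1,0}\to\mathcal{T}^{1,0}(w,w')$, and I will read each result back as a section through this identification.

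For the first identity, take $\xi=Z=z^0\,\pa/\pa z^0$. Then $\wt\nabla_Z\wt V_0=0$ by the definition of $\wt{\mathcal{T}}^{1,0}$, while $Z\bigl((z^0)^w(z^{\ol 0})^{w'}\bigr)=w(z^0)^w(z^{\ol 0})^{w'}$ because $Z$ kills $z^{\ol 0}$ and scales $(z^0)^w$ by $w$; hence $\wt\nabla_Z\wt V=w\wt V$, i.e.\ $wV$. For the second identity, take $\xi=\wt\nu$, the horizontal lift of $\nu$. By construction $dz^0(\wt\nu)=dz^{\ol 0}(\wt\nu)=0$, so the weight factor is annihilated and the first term drops; moreover $\wt\nu$ is homogeneous of degree $(0,0)$ (its components are pulled back from $M$, whence $[Z,\wt\nu]=[\ol Z,\wt\nu]=0$), so it is an admissible lift in the definition of $\nabla^{\mathcal T}$ and $\wt\nabla_{\wt\nu}\wt V_0=\nabla^{\mathcal T}_\nu V_0$. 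Thus $\wt\nabla_{\wt\nu}\wt V$ reads back as $\sigma^w\ol\sigma^{w'}\nabla^{\mathcal T}_\nu V_0$, which equals the coupled connection $\nabla^{\mathcal T}_\nu V=\bigl(\nabla_\nu(\sigma^w\ol\sigma^{w'})\bigr)V_0+\sigma^w\ol\sigma^{w'}\nabla^{\mathcal T}_\nu V_0$ precisely when the Tanaka--Webster density derivative $\nabla_\nu(\sigma^w\ol\sigma^{w'})$ vanishes for tangential $\nu$.

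The third identity runs verbatim with $\xi=\wt T$, the horizontal lift of the Reeb field: $\wt T$ again kills $z^0,z^{\ol 0}$ and is homogeneous of degree $(0,0)$, so $\wt\nabla_{\wt T}\wt V$ reads back as $\sigma^w\ol\sigma^{w'}\nabla^{\mathcal T}_T V_0$ and the sole discrepancy from $\nabla^{\mathcal T}_T V$ is the density term $-\nabla_T(\sigma^w\ol\sigma^{w'})\,V_0$. Writing the connection form of $\calE(w,w')$ as $\tfrac{1}{n+2}(w\,\omega_\gamma{}^\gamma+w'\omega_{\ol\gamma}{}^{\ol\gamma})$ and using $\mathrm{Re}\,\omega_\gamma{}^\gamma=0$ (so $\omega_{\ol\gamma}{}^{\ol\gamma}=-\omega_\gamma{}^\gamma$) in a unitary coframe, this discrepancy is $-\tfrac{w-w'}{n+2}\,\omega_\gamma{}^\gamma(T)\,V$. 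Everything therefore hinges on the value of $\omega_\gamma{}^\gamma(T)$ for the pseudo-Einstein scale.

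This last evaluation is the crux and the only place the pseudo-Einstein hypothesis is used. I would read it off the defining property of $\sigma$: it comes from a closed volume-normalizing section $\zeta=\theta\wedge\theta^1\wedge\cdots\wedge\theta^n\in K_M$, with $\nabla\zeta=-\omega_\gamma{}^\gamma\otimes\zeta$. Expanding $0=d\zeta$ via the structure equations, the $d\theta$ and torsion contributions drop (each carries an extra holomorphic factor or a repeated $\theta$), leaving $\theta\wedge\omega_\gamma{}^\gamma\wedge(\theta^1\wedge\cdots\wedge\theta^n)=0$; this forces $\omega_\gamma{}^\gamma$ to have no $\theta^{\ol\gamma}$-component, and reality then removes the $\theta^\gamma$-component, so $\omega_\gamma{}^\gamma=a_0\,\theta$ with $a_0$ imaginary. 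Comparing the $\theta^\gamma\wedge\theta^{\ol\mu}$-part of $d\omega_\gamma{}^\gamma=R_{\gamma\ol\mu}\theta^\gamma\wedge\theta^{\ol\mu}+\cdots$ with $a_0\,d\theta$ gives $R_{\gamma\ol\mu}=ia_0h_{\gamma\ol\mu}$, hence $a_0=-iR/n$ and, via $P=R/(2(n+1))$, $\nabla_T\sigma=-\tfrac{2(n+1)}{n(n+2)}iP\,\sigma$ together with $\nabla_\alpha\sigma=\nabla_{\ol\alpha}\sigma=0$. Substituting back makes the tangential density terms vanish (so the first two identities carry no correction) and turns the $T$-direction discrepancy into exactly $+\tfrac{2(n+1)}{n(n+2)}i(w-w')P\,V$, as claimed. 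The only real obstacle is thus this curvature bookkeeping; as an alternative more in the spirit of the present section, one could instead extract $\omega_\gamma{}^\gamma(T)$ directly from the ambient connection forms \eqref{omega-rho-0} together with $\kappa|_M=\tfrac{2}{n}P$ from \eqref{kappa-M}.
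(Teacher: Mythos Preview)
Your proof is correct and follows essentially the same route as the paper: factor $\wt V=(z^0)^w(z^{\ol 0})^{w'}\wt V_0$, apply Leibniz, and reduce everything to the computation of $\nabla\sigma$ for the pseudo-Einstein scale, which you carry out (as the paper does, in a separate lemma) via the closed volume-normalizing section $\zeta$ and the trace of the curvature form. The only cosmetic difference is that the paper packages the density computation as $\nabla\sigma=-\tfrac{2(n+1)}{n(n+2)}iP\,\theta\otimes\sigma$ first and then substitutes, whereas you phrase it directly in terms of $\omega_\gamma{}^\gamma(T)$; the content is identical.
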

\begin{lem}
The Tanaka--Webster connection associated with the CR scale $\sigma\in\calE(1, 0)$ satisfies
\[
\nabla\sigma=-\frac{2(n+1)}{n(n+2)}iP\theta\otimes\sigma.
\]
\end{lem}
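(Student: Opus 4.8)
The plan is to pass from $\sigma$ to the canonical bundle and compute a single connection $1$-form there. Under the identification $\calE(1,0)^{\otimes(-n-2)}=K_M$, the pseudo-Einstein scale $\sigma=(dz^1\wedge\cdots\wedge dz^{n+1})^{-1/(n+2)}$ corresponds to $\zeta_0:=\sigma^{-(n+2)}$, the restriction to $M$ of the holomorphic volume form $dz^1\wedge\cdots\wedge dz^{n+1}$; in particular $\zeta_0$ is a nowhere-vanishing closed section of $K_M$. Writing $\nabla\zeta_0=\beta\otimes\zeta_0$ for a $1$-form $\beta$, the Leibniz rule gives $\nabla\sigma=-\tfrac{1}{n+2}\beta\otimes\sigma$, so it suffices to prove $\beta=\tfrac{iR}{n}\theta$; since $P=R/2(n+1)$ this is equivalent to the claim. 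I will work in an admissible coframe with $h_{\a\ol\b}=\delta_{\a\ol\b}$, for which $\zeta:=\theta\wedge\theta^1\wedge\cdots\wedge\theta^n$ satisfies $\nabla\zeta=-\omega_\a{}^\a\otimes\zeta$, and determine the components of $\beta$ along $\theta^\a,\theta^{\ol\a},\theta$ in turn.

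First I would note that $\beta$ is purely imaginary: the scale $\tau=|\sigma|^2$ attached to $\theta$ satisfies $\nabla\tau=0$ (recorded in \S\ref{CR-geometry}), and since $|\zeta_0|^2=\tau^{-(n+2)}$ this forces $\beta+\ol\beta=0$. The vanishing of the horizontal components is where pseudo-Einstein enters, through the closedness of $\zeta_0$. Since $\theta$ is volume normalized by both $\zeta$ and $\zeta_0$, these differ by a phase, $\zeta_0=e^{i\phi}\zeta$, whence $\beta=id\phi-\omega_\a{}^\a$. Expanding $d\zeta_0=0$ against the structure equations $d\theta=i\theta^\a\wedge\theta^{\ol\a}$ and $d\theta^\b=\theta^\a\wedge\omega_\a{}^\b+A^\b{}_{\ol\g}\theta\wedge\theta^{\ol\g}$, every term carries the factor $\theta$, and the vanishing of the coefficient of $\theta\wedge\theta^{\ol\g}\wedge\theta^1\wedge\cdots\wedge\theta^n$ yields $\omega_{\a,\ol\g}{}^\a=i\nabla_{\ol\g}\phi$; hence the $\theta^{\ol\g}$-component of $\beta$ vanishes, and $\beta_\g=0$ follows by conjugation since $\beta$ is purely imaginary. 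Thus $\beta=\beta_0\,\theta$ for a single imaginary function $\beta_0$.

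It remains to compute $\beta_0$ from the curvature. Differentiating $\beta=id\phi-\omega_\a{}^\a$ gives $d\beta=-d\omega_\a{}^\a$, which is the negative of the trace of the Tanaka--Webster curvature form (the $\omega_\a{}^\g\wedge\omega_\g{}^\a$ term vanishing by antisymmetry); from the curvature expression in \S\ref{CR-geometry} its $\theta^\g\wedge\theta^{\ol\mu}$-part is $-R_{\g\ol\mu}\theta^\g\wedge\theta^{\ol\mu}$, the torsion terms all carrying a factor of $\theta$. On the other hand $d\beta=d(\beta_0\theta)$ has $\theta^\g\wedge\theta^{\ol\mu}$-part equal to $i\beta_0 h_{\g\ol\mu}\theta^\g\wedge\theta^{\ol\mu}$. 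Matching and tracing with $h^{\g\ol\mu}$ gives $in\beta_0=-R$, hence $\beta_0=\tfrac{iR}{n}$; the full identity $i\beta_0 h_{\g\ol\mu}=-R_{\g\ol\mu}$ is consistent precisely because $\theta$ is pseudo-Einstein, and substituting back yields $\nabla\sigma=-\tfrac{iR}{n(n+2)}\theta\otimes\sigma=-\tfrac{2(n+1)}{n(n+2)}iP\,\theta\otimes\sigma$.

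The main obstacle will be the bookkeeping in the second step: correctly identifying $\zeta_0$ with a phase multiple of the volume-normalized frame and expanding $d\zeta_0=0$ with the right signs and orderings, so as to isolate the relation $\omega_{\a,\ol\g}{}^\a=i\nabla_{\ol\g}\phi$. Once $\beta$ is known to be an imaginary multiple of $\theta$, the curvature computation is immediate, and the two readings of the statement (with $\theta$ weighted as in \S\ref{I-prime-curv} versus a fixed representative contact form) match under the standard scale correspondence.
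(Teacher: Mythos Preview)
Your argument is correct and follows essentially the same strategy as the paper: use closedness of the holomorphic volume form to kill the horizontal components of the connection form on $K_M$, use preservation of the real density to see the form is imaginary, and read off the $\theta$-coefficient from the trace of the curvature. The one difference is that you work in a unitary admissible coframe ($h_{\a\ol\b}=\delta_{\a\ol\b}$) and carry the phase $e^{i\phi}$ between $\zeta_0$ and $\zeta$, whereas the paper instead chooses the admissible coframe so that $\theta\wedge\theta^1\wedge\cdots\wedge\theta^n=(dz^1\wedge\cdots\wedge dz^{n+1})|_{TM}$; this sets your $\phi\equiv0$ and yields $\det(h_{\a\ol\b})=1$, so the proof collapses to showing directly that $\omega_\a{}^\a=if\theta$ and then identifying $f$ from $d\omega_\a{}^\a$. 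Your version is slightly longer but not materially different.
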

\begin{proof}
We take an admissible coframe $\{\theta^\a\}$ such that $\theta\wedge\theta^1\wedge\cdots\wedge\theta^n=\zeta:=(dz^1\wedge\cdots\wedge dz^{n+1})|_{TM}$. 
By $d\zeta=0$, we have
\[
-\omega_\a{}^\a\wedge\theta\wedge\theta^1\wedge\cdots\wedge\theta^n=0.
\]
We also have $\omega_\a{}^\a+\omega_{\ol\a}{}^{\ol\a}=0$ from $\det(h_{\a\ol\b})=1$, so $\omega_\a{}^\a$ is of the form $if\theta$ with a real function $f$. Since 
$d\omega_\a{}^\a\equiv(1/n)R h_{\a\ol\b}\theta^\a\wedge\theta^{\ol\b}$ mod $\theta$, we obtain $f=-(1/n) R$, and hence
\[
\nabla\sigma=\frac{1}{n+2}\omega_\a{}^\a\otimes\sigma
=-\frac{2(n+1)}{n(n+2)}iP\theta\otimes\sigma.
\]
\end{proof}

{\it Proof of Proposition \ref{tractor-conn-lift}}\quad  We write $V=\sigma^w\bar{\sigma}^{w'}V_0$ and $\wt V=(z^0)^w(z^{\ol 0})^{w'}\wt V_0$ with $V_0\in \mathcal{T}^{1, 0}$, 
$\wt{V}_0\in\wt{\mathcal{T}}^{1, 0}$ as before. Since $\wt\nabla_Z V_0=0$, we have $\wt\nabla_Z \wt V=w\wt V$. By the lemma above, we have 
\begin{align*}
\nabla^{\mathcal{T}}_\nu V=\nabla^{\mathcal{T}}_\nu(\sigma^w\bar\sigma^{w'}V_0) 
=\sigma^w\bar\sigma^{w'}\nabla^{\mathcal{T}}_\nu V_0.
\end{align*}
The last term is identified with the vector field
\[
(z^0)^w(z^{\ol0})^{w'}\wt\nabla_\nu \wt{V}_0=\wt\nabla_\nu \wt V.
\]
Similarly, we have
\begin{align*}
\nabla^{\mathcal{T}}_T V&=\nabla^{\mathcal{T}}_T(\sigma^w\bar\sigma^{w'}V_0) \\
&=\sigma^w\bar\sigma^{w'}
\Bigl(\nabla^{\mathcal{T}}_T V_0-\frac{2(n+1)}{n(n+2)}i(w-w')PV_0 \Bigr),
\end{align*}
and this is identified with
\begin{align*}
&\quad (z^0)^w(z^{\ol0})^{w'}\Bigl(\wt\nabla_T \wt{V}_0-\frac{2(n+1)}{n(n+2)}i(w-w')P\wt{V}_0 \Bigr) \\
&=\wt\nabla_{\wt T} \wt V-\frac{2(n+1)}{n(n+2)}i(w-w')P\wt V.
\end{align*}
Thus we obtain the proposition. \qed
\bigskip

Let $\{Z_\a, \xi\}$ be a Graham--Lee frame of $T^{1, 0}X$ for the Fefferman defining function $\r:=|\sigma|^{-2}\br$. We define $\mathbb{Z}_0, \mathbb{Z}_\a, \mathbb{Z}_\infty \in \mathcal{T}^{1, 0}(1, 0)$ by 
\begin{equation}\label{GL-lifted}
\mathbb{Z}_0:=Z, \quad \mathbb{Z}_\a:=\wt{Z}_\a, \quad \mathbb{Z}_\infty:=-\wt{\xi}-\frac{\kappa}{2}Z,
\end{equation}
where $\wt{Z}_\a, \wt\xi$ are horizontal lifts and $\kappa$ is the transverse curvature.
The correspondence between $\mathcal{T}^{1, 0}(w, w')$ and the CR tractor bundle $\calE^A(w, w')$ is given as follows:
\begin{theorem}
Let $(U, (z^1, \dots, z^{n+1}))$ be a local coordinate system and $|\sigma|^2\in\calE(1, 1)$ the associated pseudo-Einstein CR scale. Then, the bundle isomorphism 
\[
\begin{array}{ccc}
\Psi\colon\ \calE^A(w, w')|_{U\cap M} & \longrightarrow &  \mathcal{T}^{1, 0}(w, w')|_{U\cap M} \\
\rotatebox{90}{$\in$} &   & \rotatebox{90}{$\in$} \\
\lambda Z^A+\nu^\b W_\b^A+\varphi Y^A & \longmapsto & \lambda \mathbb{Z}_0+\nu^\b
\mathbb{Z}_\b+|\sigma|^{-2}\varphi\mathbb{Z}_\infty
\end{array}
\]
  is independent of the choice of coordinates $(z^1, \dots, z^{n+1})$, and preserves the connections: $\Psi^*\nabla^{\mathcal{T}}=\nabla$. When $(w, w')=(0, 0)$, it also preserves the bundle metrics: $\Psi^*\wt g_{A\ol B}=h_{A\ol B}$.
\end{theorem}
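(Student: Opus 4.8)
The plan is to verify all three assertions on the frames, reducing everything to the boundary data at $\r=0$. Fixing coordinates $(z^1,\dots,z^{n+1})$ with scale $\sigma$, Fefferman defining function $\r=|\sigma|^{-2}\br$ and a Graham--Lee frame $\{Z_\a,\xi\}$, the map $\Psi$ carries $\{Z^A,W_\a^A,Y^A\}$ to $\{\mathbb{Z}_0,\mathbb{Z}_\a,|\sigma|^{-2}\mathbb{Z}_\infty\}$, and I use throughout the identification $|z^0|^2=|\sigma|^2=\tau$ of functions on $\wt X_0$. The metric preservation, required only at weight $(0,0)$, is the quickest: at $\r=0$ the ambient metric in the frame $\{Z,Z_\a,\xi\}$ gives $\wt g(Z,\ol Z)=0$, $\wt g(Z,\ol\xi)=-|z^0|^2$, $\wt g(Z_\a,\ol{Z_\b})=|z^0|^2h_{\a\ol\b}$ and $\wt g(\xi,\ol\xi)=|z^0|^2\kappa$, so substituting $\mathbb{Z}_\infty=-\wt\xi-\tfrac\kappa2 Z$ a short computation yields $\wt g(\mathbb{Z}_0,\ol{\mathbb{Z}_\infty})=|z^0|^2$, $\wt g(\mathbb{Z}_\infty,\ol{\mathbb{Z}_\infty})=0$, and vanishing of the other pairings; under $|z^0|^2=\tau$ this matches $h_{A\ol B}$ entry by entry (in particular $\wt g(\mathbb{Z}_\a,\ol{\mathbb{Z}_\b})=\bh_{\a\ol\b}$), the correction $-\tfrac\kappa2 Z$ being exactly what cancels the $\kappa$ in $\wt g(\xi,\ol\xi)$.

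For the connection I compute $\nabla^{\mathcal{T}}$ of $\mathbb{Z}_0,\mathbb{Z}_\a,\mathbb{Z}_\infty$ in the directions $Z_\b$, $Z_{\ol\b}$ and $T$, matching \eqref{tractor-conn}. By Proposition \ref{tractor-conn-lift} the $Z_\b$- and $Z_{\ol\b}$-derivatives reduce to $\wt\nabla$ of the homogeneous lift without correction, which I evaluate from \eqref{omega-rho-0} together with $\wt\nabla_V Z=V$, $\wt\nabla_{\ol V}Z=0$ of \eqref{Z-R}, \eqref{nabla-Z}. For instance $\wt\nabla_{Z_\b}Z_\a=\varphi_\a{}^\g(Z_\b)Z_\g-iA_{\a\b}Z$ is precisely the $\Psi$-image of the $\b$-derivative of $W_\a^A$ in \eqref{tractor-conn}, and the (anti)holomorphic derivatives of $\mathbb{Z}_0=Z$ and $\mathbb{Z}_\infty$ follow in the same way.

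The main obstacle is the $T$-direction, where two features conspire: Proposition \ref{tractor-conn-lift} carries the extra term $\tfrac{2(n+1)}{n(n+2)}i(w-w')P$, and the $T$-slot of \eqref{tractor-conn} is the most intricate, involving $P$, $T^\a$ and $S$. Writing $T=-i(\xi-\ol\xi)$ and using $\vartheta(T)=1$, $\pa\r(T)=-i$, $\th^\a(T)=0$, I would evaluate $\wt\omega_A{}^B(\wt T)$ from \eqref{omega-rho-0}, producing $\kappa$, $\xi\kappa$ and $\kappa_\a$-type terms. Converting these into Tanaka--Webster data is where the boundary-expansion machinery is essential: \eqref{kappa-M} turns $\kappa$ into $2P/n$, while \eqref{A-N} and \eqref{kappa-N}, through Proposition \ref{exp-GL}, express $N\kappa$ — hence $\xi\kappa$ — by means of $P$, $\nabla_\a P$, $A_{\a\b}$ and $\Delta_b\kappa$, which are exactly the ingredients inside $T_\a$ and $S$. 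The delicate step is that the $P$-correction of Proposition \ref{tractor-conn-lift} together with the $\tfrac{i}{n+2}P$, $T^\a$ and $iS$ terms of \eqref{tractor-conn} must reassemble, via these identities, into an exact match; I expect this to be bookkeeping rather than a conceptual hurdle.

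Finally, coordinate independence: I would show the ambient frame transforms under $(z^i)\mapsto(\wh z^i)$ by exactly \eqref{Z-W-Y}. Such a change replaces $\sigma$ by $\wh\sigma$ with $|\wh\sigma|^2=e^{\U}|\sigma|^2$ for a CR pluriharmonic $\U$, and since $\mathbb{Z}_0=Z$ is the intrinsic dilation generator it is unchanged, matching $\wh Z^A=Z^A$ and anchoring the comparison. It then remains to verify $\wh{\mathbb{Z}}_\a=\mathbb{Z}_\a+\U_\a\mathbb{Z}_0$ and the corresponding rule for $|\wh\sigma|^{-2}\wh{\mathbb{Z}}_\infty$, which I obtain by comparing the two horizontal lifts — the passage $z^0\mapsto\wh z^0$ tilts the horizontal direction by a multiple of $Z$ — and the two Graham--Lee frames attached to $\r$ and $\wh\r=e^{-\U}\r$, tracking how $\U_\a$ and $\tfrac12(\U_\b\U^\b\pm i\U_T)$ enter.
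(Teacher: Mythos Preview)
Your plan is correct and mirrors the paper's proof almost exactly: verify the metric on the frame $\{\mathbb{Z}_A\}$, check coordinate independence by comparing the transformation of $\{\mathbb{Z}_A\}$ with \eqref{Z-W-Y}, and then match $\nabla^{\mathcal{T}}\mathbb{Z}_A$ against \eqref{tractor-conn} using \eqref{omega-rho-0}, Proposition \ref{tractor-conn-lift}, and the boundary identities \eqref{kappa-M}, \eqref{kappa-N} together with the pseudo-Einstein simplifications \eqref{p-E-formula}. One small slip to fix: the coordinate change gives $\wh\sigma=e^{-f}\sigma$ with $f=\tfrac12\U+iv$ holomorphic, hence $|\wh\sigma|^2=e^{-\U}|\sigma|^2$ and $\wh\r=e^{\U}\r$ (opposite signs to what you wrote); the holomorphicity of $f$ is what produces the identities $f_\a=\U_\a$ and $\xi f=-\tfrac{1}{2n}\Delta_b\U+\tfrac{i}{2}\U_T$ needed to recover the exact coefficients in \eqref{Z-W-Y}.
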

\begin{proof}
First, we observe that along $\mathcal{N}_0$ the ambient metric is given by the matrix
\[
\bigl(\wt g(\mathbb{Z}_A, \mathbb{Z}_{\ol B})\bigr)=
\begin{pmatrix}
0 & 0 & 1 \\
0 & h_{\a\ol\b} & 0\\
1 & 0 & 0 
\end{pmatrix},
\]
which agrees with the tractor metric. Hence $\Psi$ preserves the metrics. 

To show that $\Psi$ is independent of choice of coordinates, we will compute the transformation formula of $\{\mathbb{Z}_A\}$ under coordinate change. 
 Let $(\hat z^1, \dots, \hat z^{n+1})$ be another coordinate system. We can take a fractional power 
\[
(d\hat z^1\wedge \cdots\wedge d\hat z^{n+1})^{\frac{-1}{n+2}}=e^{-f}(dz^1\wedge\cdots\wedge dz^{n+1})^{\frac{-1}{n+2}}
\]
with a holomorphic function $f=(1/2)\U+iv$ so that the new pseudo-Einstein CR scale is given by $\wh\sigma:=e^{-f|_M}\sigma\in\calE(1, 0)$. We note that $Z_{\ol\a} f=0$ gives
\begin{equation*}
v_\a=-\frac{i}{2}\U_\a
\end{equation*}
and hence
\[
f_\a=\U_\a.
\]
Also, from
\begin{align*}
0=\ol\xi f&=\Bigl(N-\frac{i}{2}T\Bigr)\Bigl(\frac{1}{2}\U+iv\Bigr) \\
&=\frac{1}{2}(N\U+v_T)+i\Bigl(Nv-\frac{1}{4}\U_T\Bigr),
\end{align*}
we have
\begin{equation*}
N\U=-v_T=\frac{i}{n}(v_\a{}^\a-v_{\ol\a}{}^{\ol\a})=-\frac{1}{2n}\Delta_b\U, \quad 
Nv=\frac{1}{4}\U_T
\end{equation*}
and 
\begin{equation*}
\xi f=(\xi+\ol\xi)f=2Nf=-\frac{1}{2n}\Delta_b\U+\frac{i}{2}\U_T.
\end{equation*}
The transformation formulas for $\xi$ and $\kappa$ are given by
\begin{equation*}
\begin{aligned}
\wh\xi&=e^{-\U}(\xi+\U^\a Z_\a),  \\
\wh\kappa&=e^{-\U}(\kappa-2N\U-\U_\a\U^\a) \\
&=e^{-\U}(\kappa+\frac{1}{n}\Delta_b\U-\U_\a\U^\a).
\end{aligned}
\end{equation*}
First, it is easy see that $\wh{\mathbb{Z}}_0=\mathbb{Z}_0$
since both are the $(1, 0)$-part of the infinitesimal generator of the $\mathbb{C}^*$-action.
We will consider $\wh{\mathbb{Z}}_\a, \wh{\mathbb{Z}}_\infty$. Since the fiber coordinates are related as
$\hat z^0=e^{-f}z^0$, we have
\[
\frac{d\hat z^0}{\hat z^0}=\frac{dz^0}{z^0}-df.
\]
Thus, if we write $V^\sigma, V^{\wh\sigma}$ for the horizontal lifts of a vector field $V$ on $M$ with respect to $(z^i)$ and $(\hat z^i)$, they satisfy
\[
V^{\wh\sigma}=V^{\sigma}+(Vf)Z.
\]
Hence we have
\[
\wh{\mathbb{Z}}_\a=Z^{\wh\sigma}_\a=Z^\sigma_\a+f_\a Z=\mathbb{Z}_\a+\U_\a\mathbb{Z}_0,
\]
and 
\begin{align*}
\wh{\mathbb{Z}}_\infty&=-\wh{\xi}^{\,\wh\sigma}-\frac{\wh\kappa}{2}Z \\
&=-e^{-\U}(\wh{\xi}^\sigma+(\wh\xi f)Z)-\frac{\wh\kappa}{2}Z \\
&=-e^{-\U}\Bigl[\xi^\sigma+\U^\a Z^\sigma_\a+\Bigl(-\frac{1}{2n}\Delta_b\U+\frac{i}{2}\U_T+\U^\a \U_\a\Bigr)Z \\
&\qquad \qquad +\frac{1}{2}\Bigl(\kappa+\frac{1}{n}\Delta_b\U-\U_\a\U^\a\Bigr)Z\Bigr] \\
&=e^{-\U}\Bigl(\mathbb{Z}_\infty-\U^\a \mathbb{Z}_\a-\frac{1}{2}(\U_\a\U^\a+i\U_T)\mathbb{Z}_0\Bigr).
\end{align*}
These transformation formulas agree with those for $Z^A, W^A_\a, |\sigma|^2Y^A$. Thus, $\Psi$ is independent of  choice of $(z^i)$.

We will express $\nabla^{\mathcal{T}}\mathbb{Z}_A$ in terms of the Tanaka--Webster connection  and check that it coincides with the tractor connection \eqref{tractor-conn} computed with a pseudo-Einstein contact form. Since $g$ satisfies the approximate Einstein equation, we have the equation \eqref{kappa-M} and \eqref{kappa-N} for $\kappa$. In particular, the derivative of $\kappa$ is given by 
\begin{equation}\label{kappa-derivative}
\begin{aligned}
N\kappa|_M&=-\frac{1}{2n}\Delta_b\kappa+\kappa^2+\frac{1}{n}|A_{\a\b}|^2 \\
&=-\frac{1}{n^2}\Delta_b P+\frac{4}{n^2}P^2+\frac{1}{n}|A_{\a\b}|^2,\\
\xi\kappa|_M&=-\frac{1}{n^2}\Delta_b P+\frac{i}{n}\nabla_T P+\frac{4}{n^2}P^2+\frac{1}{n}|A_{\a\b}|^2.
\end{aligned}
\end{equation}
We also recall that for a pseudo-Einstein contact form $\th$, we have 
\begin{equation}\label{p-E-formula}
\begin{aligned}
iA_{\a\g,}{}^\g=(1/n)\nabla_\a R, 
\quad T_\a=-\frac{1}{n}\nabla_\a P, \\
S=-\frac{1}{n^2}\Delta_b P-\frac{1}{n^2}P^2+\frac{1}{n}|A_{\a\b}|^2.
\end{aligned}
\end{equation}
By \eqref{nabla-Z}, we have
\[
\nabla^{\mathcal{T}}_{Z_\b}\mathbb{Z}_0=\wt\nabla_{\wt Z_\b}Z=\wt Z_\b=\mathbb{Z}_\b, \quad \nabla^{\mathcal{T}}_{Z_{\ol\b}}\mathbb{Z}_0=\wt\nabla_{\wt Z_{\ol\b}}Z=0.
\]
Since $T=i(\ol\xi-\xi)$ and $\mathbb{Z}_0$ has weight $(1, 0)$, we have 
\begin{align*}
\nabla^{\mathcal{T}}_T \mathbb{Z}_0=\wt\nabla_T Z-\frac{2(n+1)}{n(n+2)}i P Z
=-i\xi-\frac{2(n+1)}{n(n+2)}i P Z
=i\mathbb{Z}_\infty-\frac{i}{n+2}P \mathbb{Z}_0.
\end{align*}
We denote by $\Gamma_{ij}{}^k$ the Christoffel symbols of Tanaka--Webster connection. By using \eqref{omega-rho-0}, \eqref{kappa-derivative}, and \eqref{p-E-formula}, we compute as
\begin{align*}
\nabla^{\mathcal{T}}_{Z_\b}\mathbb{Z}_\a&=
\wt\nabla_{\wt Z_\b}\wt Z_\a=\Gamma_{\b\a}{}^\g\mathbb{Z}_\g -iA_{\a\b}\mathbb{Z}_0, \\
\nabla^{\mathcal{T}}_{Z_{\ol\b}}\mathbb{Z}_\a&=
\wt\nabla_{\wt Z_{\ol\b}}\wt Z_\a=\Gamma_{\ol\b\a}{}^\g\wt{Z}_\g+h_{\a\ol\b}\wt\xi \\
&=\Gamma_{\ol\b\a}{}^\g\mathbb{Z}_\g-(1/n)Ph_{\a\ol\b}\mathbb{Z}_0-h_{\a\ol\b}\mathbb{Z}_\infty \\
&=\Gamma_{\ol\b\a}{}^\g\mathbb{Z}_\g-P_{\a\ol\b}\mathbb{Z}_0-h_{\a\ol\b}\mathbb{Z}_\infty, \\
\nabla^{\mathcal{T}}_{T}\mathbb{Z}_\a
&=\wt\nabla_{\wt T}\wt Z_\a-\frac{2(n+1)}{n(n+2)}iPZ_\a \\
&=\Gamma_{T\a}{}^\g\mathbb{Z}_\g +\frac{2}{n(n+2)}iP\mathbb{Z}_{\a}+\frac{2i}{n}(\nabla_\a P)\mathbb{Z}_0 \\
&=\Gamma_{T\a}{}^\g\mathbb{Z}_\g +i\Bigl(P_\a{}^\b-\frac{P}{n+2}\d_\a{}^\b\Bigr)\mathbb{Z}_{\b}-2i T_\a\mathbb{Z}_0, 
\end{align*}
and 
\begin{align*}
\nabla^{\mathcal{T}}_{Z_\b}\mathbb{Z}_\infty
&=\wt\nabla_{Z_\b}\Bigl(-\xi-\frac{\kappa}{2}Z\Bigr) 
=\frac{1}{n}P\mathbb{Z}_\b+\frac{1}{n}(\nabla_\b P)\mathbb{Z}_0 
=P_\b{}^\g \mathbb{Z}_\g-T_\b \mathbb{Z}_0, \\
\nabla^{\mathcal{T}}_{Z_{\ol\b}}\mathbb{Z}_\infty&=\wt\nabla_{Z_{\ol\b}}\Bigl(-\xi-\frac{\kappa}{2}Z\Bigr) 
=-iA_{\ol\b}{}^\g\mathbb{Z}_\g-\frac{1}{n}(\nabla_{\ol\b}P)\mathbb{Z}_0 
=-iA_{\ol\b}{}^\g \mathbb{Z}_\g+T_{\ol\b}\mathbb{Z}_0, \\
\nabla^{\mathcal{T}}_T\mathbb{Z}_\infty&=\wt\nabla_T\Bigl(-\xi-\frac{\kappa}{2}Z\Bigr)-\frac{2(n+1)}{n(n+2)}i P\Bigl(-\xi-\frac{\kappa}{2}Z\Bigr) \\
&=\Bigl(\frac{i}{n^2}P^2+\frac{i}{n^2}\Delta_b P-\frac{i}{n}|A_{\a\b}|^2\Bigr)\mathbb{Z}_0
+\frac{2i}{n}(\nabla^\b P)\mathbb{Z}_\b -\frac{i}{n+2}P\mathbb{Z}_\infty \\
&=-iS\mathbb{Z}_0-2i T^\g\mathbb{Z}_\g-\frac{i}{n+2}P\mathbb{Z}_\infty.
\end{align*}
Thus, $\Psi^*\nabla^{\mathcal{T}}$ coincides with the tractor connection.
 \end{proof}

\subsection{The CR $D$-operator and the CR Weyl tractor}
By using the bundle isomorphism $\Psi$, we can identify a weighted tractor 
\[
t\in\calE_{A_1\cdots A_p \ol A_1\cdots\ol A_{p'}}^{B_1\cdots B_q 
\ol B_1\cdots \ol B_{q'}}(w, w')
\]
with an ambient tensor $\tilde t$ on $\wt X_0$ homogeneous of degree $(w+p-q, w+p'-q')$. 
Then, the ambient description of the CR $D$-operator is given by 
\begin{equation}\label{D-ambient}
D_A t=(n+w+w')\wt\nabla_A \tilde t+Z_A\wt\Delta \tilde t,
\end{equation}
where $\wt\Delta=-\wt\nabla_A\wt\nabla^A$ is the K\"ahler Laplacian and the right-hand side is independent of the choice of the extension $\tilde t$ (\cite{CG}).
In fact, $D_A$ is characterized by the equation
\[
h^{P\ol Q}D_{(A|P|}D_{B)\ol Q} t=-Z_{(A}D_{B)}t,
\]
where 
\begin{align*}
D_{AB}t&=(Z_B Y_A-Z_A Y_B)wt+Z_B W_A^\a\nabla_\a t-Z_A W_B^\b \nabla_\b t, \\
D_{A\ol{B}}t&=Z_{\ol B}Y_A wt-Z_A Y_{\ol B} w' t+Z_{\ol B}W_A^\a\nabla_\a t-Z_A W_{\ol B}^{\ol\b}\nabla_{\ol\b}t \\
&\quad -Z_AZ_{\ol B}\Bigl(i\nabla_T t+\frac{w'-w}{n+2}Pt\Bigr) 
\end{align*}
are CR invariant operators called the {\it double $D$-operators} (\cite{G}). By using Proposition \ref{tractor-conn-lift}, one can see that the ambient descriptions of these operators are 
\[
D_{AB}t=Z_B\wt\nabla_A \tilde t-Z_A\wt\nabla_B \tilde t, \quad D_{A\ol B}t=Z_{\ol B}\wt\nabla_A \tilde t-Z_A \wt\nabla_{\ol B}\tilde t,
\]
which are independent of the choice of the extension $\tilde t$. The equation \eqref{D-ambient} follows from these formulas.

Next we consider the curvature tensor $\wt R_{A\ol B C\ol E}$ of the ambient metric. We define the {\it CR Weyl tractor}  $S_{A\ol BC\ol E}\in\calE_{A\ol BC\ol E}(-1, -1)$ by
\begin{align*}
S_{A\ol BC\ol E}&=W_A^\a W_{\ol B}^{\ol\b}\,\Omega_{\a\ol\b C\ol E}-\frac{1}{n-1}W_A^\a Z_{\ol B}\nabla^{\ol\g}\Omega_{\a\ol\g C\ol E}-\frac{1}{n-1}Z_A W_{\ol B}^{\ol\b}\nabla^\mu
\Omega_{\mu\ol\b C\ol E} \\
&\quad +Z_AZ_{\ol B}
\Bigl(\frac{1}{(n-1)^2}\nabla^{\ol\g}\nabla^\mu\Omega_{\mu\ol\g C\ol E}+\frac{1}{n-1}
P^{\ol\g\mu}\Omega_{\mu\ol\g C\ol E}\Bigr).
\end{align*}
Note that the CR Weyl tractor defined in \cite{CG} is $(n-1)$ times ours. The CR Weyl tractor is a CR invariant tractor, which is characterized by the following conditions:
\[
S_{A\ol BC\ol E}=\overline{S_{B\ol A E\ol C}}, \quad Z^A S_{A\ol BC\ol E}=0, \quad 
W^A_\a W^{\ol B}_{\ol\b}S_{A\ol BC\ol E}=\Omega_{\a\ol\b C\ol E}, \quad 
D^{\ol B}S_{A\ol BC\ol E}=0.
\]
By equations \eqref{ricci-flat}, \eqref{Z-R}, \eqref{D-ambient} and Proposition \ref{tractor-conn-lift}, we can check that the ambient curvature tensor satisfies these conditions. Thus we have
\begin{prop}[{\cite[Lemma 8.3]{CG}}]\label{R-S}
When $n\ge2$,  $\wt R_{A\ol B C\ol E}|_{\mathcal{N}}=S_{A\ol B C\ol E}.$
\end{prop}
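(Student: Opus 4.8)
The plan is to invoke the uniqueness of the CR Weyl tractor. The tensor $S_{A\ol BC\ol E}\in\calE_{A\ol BC\ol E}(-1,-1)$ is the unique object satisfying the four conditions listed just before the statement: hermitian symmetry $S_{A\ol BC\ol E}=\ol{S_{B\ol A E\ol C}}$, the degeneracy $Z^A S_{A\ol BC\ol E}=0$, the tangential identity $W^A_\a W^{\ol B}_{\ol\b}S_{A\ol BC\ol E}=\Omega_{\a\ol\b C\ol E}$, and $D^{\ol B}S_{A\ol BC\ol E}=0$. It therefore suffices to verify that the restriction $\wt R_{A\ol BC\ol E}|_{\mathcal{N}}$ of the ambient curvature, transported to a tractor tensor via the isomorphism $\Psi$, satisfies these same four conditions; I would check them one at a time.

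The first two are immediate. Hermitian symmetry $\wt R_{A\ol BC\ol E}=\ol{\wt R_{B\ol A E\ol C}}$ is the standard reality symmetry of the curvature of the Lorentz--K\"ahler metric $\wt g$, and the vanishing $Z^A\wt R_{A\ol BC\ol E}=0$ is exactly the second identity in \eqref{Z-R}, expressing that the generator $Z$ of the $\C^*$-action is a degenerate direction of the ambient curvature. For the tangential identity I would use that $\Psi$ intertwines the connections, $\Psi^*\nabla^{\mathcal{T}}=\nabla$, so that the tractor curvature $\Omega$ equals the curvature of $\nabla^{\mathcal{T}}$. Under $\Psi$ the tractor $W^A_\a$ corresponds to the horizontal lift $\mathbb{Z}_\a=\wt Z_\a$, so contracting the first pair of ambient indices with $W^A_\a W^{\ol B}_{\ol\b}$ amounts to evaluating $\wt R$ in the directions $\wt Z_\a,\wt Z_{\ol\b}$. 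Expanding the curvature of $\nabla^{\mathcal{T}}$ in these directions and comparing with $\wt R(\wt Z_\a,\wt Z_{\ol\b})$, the two differ only through the term $\wt\nabla_{[\wt Z_\a,\wt Z_{\ol\b}]}-\nabla^{\mathcal{T}}_{[Z_\a,Z_{\ol\b}]}$; since $[\wt Z_\a,\wt Z_{\ol\b}]$ is the horizontal lift of $[Z_\a,Z_{\ol\b}]=-ih_{\a\ol\b}T+(\text{tangential})$, the tangential part cancels because $\nabla^{\mathcal{T}}_{Z_\g}=\wt\nabla_{\wt Z_\g}$, while the $T$-direction discrepancy from Proposition \ref{tractor-conn-lift} is proportional to $(w-w')P$ and hence vanishes, the relevant bundle $\mathcal{T}^{1,0}$ carrying weight $(0,0)$. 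This yields $W^A_\a W^{\ol B}_{\ol\b}\wt R_{A\ol BC\ol E}=\Omega_{\a\ol\b C\ol E}$.

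The last condition is where the main work lies. Using the ambient description \eqref{D-ambient} of the tractor $D$-operator, $D_{\ol B}\,\tilde t=(n+w+w')\wt\nabla_{\ol B}\tilde t+Z_{\ol B}\wt\Delta\tilde t$, the divergence $D^{\ol B}\wt R_{A\ol BC\ol E}$ reduces to a contracted covariant derivative $\wt\nabla^{\ol B}\wt R_{A\ol BC\ol E}$ together with a Laplacian contribution. By the contracted second Bianchi identity on the K\"ahler ambient space $\wt X_0$, the former is a covariant derivative of the Ricci tensor of $\wt g$; by the approximate Einstein condition \eqref{ricci-flat} the Ricci form is $\pa\ol\pa\,O(r^{n+2})$, so the Ricci tensor vanishes to order $n$ along $\mathcal{N}$, whence it and its first derivatives restrict to zero on $\mathcal{N}$ precisely when $n\ge2$. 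The Laplacian term is controlled by the same Bianchi and Ricci-flatness input. I expect this step to be the chief obstacle: one must fix the correct interpretation of the $D^{\ol B}$-contraction, invoke the appropriate form of the contracted Bianchi identity on the Lorentz--K\"ahler ambient space, and keep careful track of the order of vanishing of the Ricci curvature and its derivatives to conclude that the restriction to $\mathcal{N}$ is zero. Once all four conditions are verified, uniqueness of the CR Weyl tractor gives $\wt R_{A\ol BC\ol E}|_{\mathcal{N}}=S_{A\ol BC\ol E}$.
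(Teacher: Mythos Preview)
Your proposal is correct and follows essentially the same approach as the paper: verify that $\wt R_{A\ol BC\ol E}|_{\mathcal{N}}$ satisfies the four characterizing conditions of the CR Weyl tractor, invoking precisely \eqref{Z-R}, Proposition~\ref{tractor-conn-lift}, \eqref{D-ambient}, and \eqref{ricci-flat} as the paper does. Your identification of the $D^{\ol B}$-condition as the substantive step, handled via the contracted Bianchi identity together with the $O(r^{n+2})$ vanishing of the ambient Ricci form (which is why $n\ge2$ is needed), matches the paper's intent; for the tangential identity your argument is slightly more elaborate than necessary, since $\nabla^{\mathcal{T}}$ on $\mathcal{T}^{1,0}$ is \emph{defined} as $\wt\nabla_{\tilde\nu}$ independently of the lift, so $\Omega_{\a\ol\b C}{}^E=\wt R(\wt Z_\a,\wt Z_{\ol\b})_C{}^E$ follows directly without tracking the $T$-correction.
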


\subsection{Relation between $\ol{\mathcal{I}}'_\Phi$ and $\Phi(\Theta)$}
We are now ready to prove the following theorem, which relates two global CR invariants associated with an invariant polynomial of degree $n$:
\begin{thm}\label{fp-I-prime}
Let $\Phi$ be an invariant polynomial of degree $n$. Let $\rho$ be a Fefferman defining function associated with a pseudo-Einstein contact form $\th$. Then we have
\[
{\rm fp}\int_{\rho>\e} \omega\wedge\Phi(\Theta)=-n\int_M \mathcal{I}'_\Phi.
\]
\end{thm}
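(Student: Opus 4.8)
The plan is to reduce the left-hand side to a single logarithmic coefficient, transport the whole computation to the ambient space via the identity $\Phi(W)=\Phi(\wt\Omega)$ of \eqref{W-Omega}, and then read off the answer from the Graham--Lee expansion of the ambient curvature together with the tractor dictionary of \S\ref{relation}. First I would apply Proposition \ref{fp-lp} with $m=n$, where the factor $\omega^{n-m}$ is absent, to obtain
\[
{\rm fp}\int_{\rho>\e}\omega\wedge\Phi(\Theta)=-{\rm lp}\int_{\rho>\e} i\pa\log\rho\wedge\ol\pa\log\rho\wedge\Phi(W).
\]
Since $\pa\log\rho=\rho^{-1}\pa\rho$ and $i\pa\rho\wedge\ol\pa\rho=-d\rho\wedge\vartheta$, the integrand equals $-\rho^{-2}\,d\rho\wedge\vartheta\wedge\Phi(W)$. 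Identifying a collar of $M$ with $M\times[0,\e_0)_\rho$ by the flow of $N$ and writing $\vartheta\wedge\Phi(W)\equiv F(x,\rho)\,\vartheta\wedge(d\vartheta)^n$ modulo $d\rho$, the $\rho$-integration isolates the logarithmic term as the first normal Taylor coefficient of $F$; thus the left-hand side reduces to $-\int_M(\pa_\rho F)(x,0)\,\theta\wedge(d\theta)^n$, and it remains to identify $(\pa_\rho F)(x,0)$ with $n\,\mathcal{I}'_\Phi$.

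Next I would compute $F$ explicitly in a Graham--Lee frame. Since $\Phi=T_{m_1}\cdots T_{m_k}$ is a polynomial in $T_p(\wt\Omega)={\rm tr}(i\wt\Omega)^p$, the coefficient $F$ is a constant multiple of the full trace $\Lambda^n$ of the purely tangential part of $\Phi(\wt\Omega)$, i.e.\ the part built from the $\theta^\g\wedge\theta^{\ol\mu}$-components $\wt\Omega_A{}^B{}_{\g\ol\mu}$ of the ambient curvature, exactly as in the computation $\Phi(\Omega)=S^\Phi(d\theta)^n+\cdots$ in the proof of Proposition \ref{X-Phi}. At $\rho=0$ these components are governed by Proposition \ref{R-S}, $\wt R_{A\ol B C\ol E}|_{\mathcal N}=S_{A\ol B C\ol E}$, so that $F(x,0)$ reproduces the CR invariant density $S^\Phi$; this term carries the $\rho^{-2}$ pole and contributes no logarithm. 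The logarithmic contribution $(\pa_\rho F)(x,0)$ therefore comes from the first normal derivative of the tangential curvature trace, which I would evaluate using the explicit ambient connection forms \eqref{omega-rho-0} and Proposition \ref{ambient-connection}, together with the boundary expansions of Proposition \ref{exp-GL} and the transverse-curvature relations \eqref{kappa-M}, \eqref{kappa-N}.

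The heart of the argument is to show that $(\pa_\rho F)(x,0)$ reassembles precisely into $n\,\mathcal{I}'_\Phi=n\,S^\Phi_{\infty\ol\infty}+\frac{1}{n^2}\Delta_b S^\Phi+\frac{2}{n}P S^\Phi$. Differentiating the tangential trace in the normal direction produces two types of terms: the normal component $\wt\Omega_\g{}^{\g'}{}_{\infty\ol\infty}$ of the curvature, which by the tractor dictionary is exactly the ingredient of $S^\Phi_{\infty\ol\infty}$, and tangential second derivatives of the curvature arising from commuting $\nabla_N$ past the tangential contractions and from the $\rho$-dependence of $h_{\a\ol\b}$ and $\vartheta$. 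The latter I would organize by means of the ambient description \eqref{D-ambient} of the CR $D$-operator, $D_A t=(n+w+w')\wt\nabla_A\tilde t+Z_A\wt\Delta \tilde t$, which converts the ambient normal derivative and Laplacian of $S^\Phi$ into the sublaplacian term $\frac{1}{n^2}\Delta_b S^\Phi$ and, via the pseudo-Einstein relations \eqref{kappa-M} and \eqref{p-E-formula}, into the curvature term $\frac{2}{n}P S^\Phi$. Collecting the three contributions yields $(\pa_\rho F)(x,0)=n\,\mathcal{I}'_\Phi$, and combining with the reduction of the first paragraph gives the stated identity.

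The main obstacle is the bookkeeping in this last step: one must verify that every term generated by the normal differentiation --- the normal curvature component, the commutator terms between $\nabla_N$ and the tangential contractions, and the subleading Taylor coefficients of the ambient curvature fixed by \eqref{Omega-N} --- conspires to form exactly the $D$-operator combination defining $\mathcal{I}'_\Phi$, with no residual terms and with the correct overall constant $-n$. Here the pseudo-Einstein hypothesis on $\th$ is essential, since it both pins down the transverse curvature through \eqref{kappa-M}--\eqref{kappa-N} and is what permits the CR $D$-operator to be realized ambiently by \eqref{D-ambient}; meanwhile the smoothness of $\Phi(W)$ up to $M$ from Proposition \ref{Theta-W} guarantees that $F$ has a genuine Taylor expansion, so that $(\pa_\rho F)(x,0)$ is well defined and only the single coefficient $\pa_\rho F|_{\rho=0}$ produces the logarithm.
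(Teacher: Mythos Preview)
Your reduction to a single normal derivative of the coefficient $F$ in $\vartheta\wedge\Phi(W)\equiv F\,\theta\wedge(d\theta)^n$ is equivalent to the paper's formula \eqref{int-lie} with $m=n$, so the starting point is fine (modulo the bookkeeping of whether you use $\vartheta\wedge(d\vartheta)^n$ or the fixed $\theta\wedge(d\theta)^n$, which affects the answer by a multiple of $\kappa F$). The substantive divergence is in how you propose to evaluate $(\pa_\rho F)(x,0)$.

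The paper never differentiates the curvature in the normal direction. Instead, since $\Phi(W)$ is closed near $M$, the Cartan formula gives
\[
\mathcal{L}_N\bigl(\vartheta\wedge\Phi(W)\bigr)=-\kappa\,\vartheta\wedge\Phi(W)+d\vartheta\wedge\bigl(N\lrcorner\,\Phi(W)\bigr)+(\text{exact}),
\]
and the whole computation reduces to an \emph{interior product}. Under the tractor dictionary $\wt N=-\tfrac12(\mathbb{Z}_\infty+\mathbb{Z}_{\ol\infty})+(\text{$\mathbb{Z}_0$-terms})$ and $\boldsymbol{\theta}^\infty|_{TM}=i\theta$, the term involving $\theta$ in $N\lrcorner\,\Phi(\wt\Omega)$ is literally $\mathbb{Z}_{\ol\infty}\lrcorner\,\mathbb{Z}_\infty\lrcorner\,\Phi(\wt\Omega)$, which after one application of the Lefschetz identity (Proposition \ref{diff-form} (iii)) and Proposition \ref{R-S} yields $-n\,S^\Phi_{\infty\ol\infty}$ on the nose. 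The $-\kappa\vartheta\wedge\Phi(W)$ term contributes $-\tfrac{2}{n}PS^\Phi$ via \eqref{kappa-M}. That is the entire calculation; the sublaplacian piece $\tfrac{1}{n^3}\Delta_b S^\Phi$ in $\mathcal{I}'_\Phi$ never appears, because the equality is established only after integration over $M$.

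Your plan, by contrast, commits you to computing $\nabla_N$ of the tangential curvature trace via the Graham--Lee expansion \eqref{Omega-N} and then reorganizing the result through the $D$-operator. There are two problems. First, the ambient formula \eqref{D-ambient} packages $\wt\nabla_A$ along $\mathcal{N}$ together with the ambient Laplacian $\wt\Delta$; at $\rho=0$ one has $\wt g^{\infty\ol\infty}=0$, so $\wt\Delta$ is tangential and \eqref{D-ambient} does \emph{not} convert the transverse derivative $\pa_\rho$ into a $D$-operator expression in the way you suggest. Second, and more seriously, you assert the pointwise identity $(\pa_\rho F)(x,0)=n\,\mathcal{I}'_\Phi$, including the term $\tfrac{1}{n^2}\Delta_b S^\Phi$. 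The paper neither proves nor needs this: its argument gives $(\pa_\rho F)(x,0)\,\theta\wedge(d\theta)^n=\bigl(-\tfrac{2}{n}PS^\Phi-nS^\Phi_{\infty\ol\infty}\bigr)\theta\wedge(d\theta)^n$ modulo an \emph{exact} form $d\bigl(\vartheta\wedge(N\lrcorner\Phi(W))\bigr)$, and only after integration does this match $-n\int_M\mathcal{I}'_\Phi$. Your ``main obstacle'' paragraph correctly flags that the bookkeeping is the hard part, but the mechanism you propose for doing it (the $D$-operator) is the wrong tool; the missing idea is precisely the Cartan-formula reduction to $N\lrcorner\,\Phi(\wt\Omega)$, which is what makes the $\infty\ol\infty$-component of $S^\Phi_{A\ol B}$ appear without any normal curvature derivatives at all.
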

\begin{proof}
We may assume that $\Phi=T_{m_1}\cdots T_{m_k}$. By Proposition \ref{Theta-W} (i) and \eqref{int-lie}, we have
\[
{\rm fp}\int_{\rho>\e} \omega\wedge\Phi(\Theta)=\int_M \mathcal{L}_N\bigl(\vartheta\wedge\Phi(W)\bigr).
\]
Near the boundary, we have $\mathcal{L}_N\vartheta=-\kappa\vartheta$ and $d\Phi(W)=0$, and hence
\begin{align*}
\mathcal{L}_N\bigl(\vartheta\wedge\Phi(W)\bigr)
&=-\kappa\vartheta\wedge\Phi(W)+\vartheta\wedge d\bigl(N\lrcorner\,\Phi(W)\bigr) \\
&=-\kappa\vartheta\wedge\Phi(W)+d\vartheta\wedge \bigl(N\lrcorner\,\Phi(W)\bigr)+
({\rm exact\ form}).
\end{align*}
We take local coordinates $(z^1, \dots, z^{n+1})$ and the associated fiber coordinate $z^0$ of $\wt X_0$. We work in the $(1, 0)$-frame $\{\mathbb{Z}_A\}$ defined by \eqref{GL-lifted} with a Graham--Lee frame $\{Z_\a, \xi\}$. The dual coframe $\{\boldsymbol{\theta}^A\}$ is given by
\[
\boldsymbol{\theta}^0=\frac{dz^0}{z^0}-\frac{\kappa}{2}\pa\rho, \quad 
\boldsymbol{\theta}^\a=\theta^\a, \quad \boldsymbol{\theta}^\infty=-\pa\rho.
\]
By \eqref{W-Omega} and Proposition \ref{R-S}, we have
\begin{align*}
\theta\wedge\Phi(W)|_{TM}&=\theta\wedge\Phi(\wt\Omega)|_{TM} \\
&=i^n S^{(m_1)}_{[\a_1\ol\b_1\cdots} \cdots S^{(m_k)}_{\quad \cdots\a_n\ol\b_n]}\theta\wedge
\theta^{\a_1}\wedge\theta^{\ol\b_1}\wedge\cdots\wedge\theta^{\a_n}\wedge\theta^{\ol\b_n} \\
&=S^{\Phi}\theta\wedge(d\theta)^n.
\end{align*}
Thus, since $\kappa|_M=(2/n)P$, we have
\begin{equation}\label{first-term}
-\int_M \kappa\vartheta\wedge\Phi(W)=-\frac{2}{n}\int_M P S^\Phi \theta\wedge(d\theta)^n.
\end{equation}
Next, we will calculate $d\theta\wedge \bigl(N\lrcorner\,\Phi(W)\bigr)\big|_{TM}$. It suffices to compute the terms involving $\theta$ in $\bigl(N\lrcorner\,\Phi(W)\bigr)\big|_{TM}$. The horizontal lift of $N$ is given by
\[
\wt N={\rm Re}\,\wt\xi=-\frac{1}{2}(\mathbb{Z}_\infty+\mathbb{Z}_{\ol\infty})-\frac{\kappa}{4}(\mathbb{Z}_0+\mathbb{Z}_{\ol0}).
\]
Since $\mathbb{Z}_0\lrcorner\,\wt\Omega=\mathbb{Z}_{\ol0}\lrcorner\,\wt\Omega=0$, we have
\[
N\lrcorner\,\Phi(W)=\wt N\lrcorner\,\Phi(\wt\Omega)=-\frac{1}{2}\bigl(\mathbb{Z}_\infty\lrcorner\,\Phi(\wt\Omega)
+\mathbb{Z}_{\ol\infty}\lrcorner\,\Phi(\wt\Omega)\bigr).
\]
From $\boldsymbol{\theta}^\infty|_{TM}=i\theta$, we observe that the term  involving $\theta$ in $\bigl(N\lrcorner\,\Phi(W)\bigr)\big|_{TM}$ is 
\begin{align*}
&\quad i\theta\wedge\bigl(\mathbb{Z}_{\ol\infty}\lrcorner\,\mathbb{Z}_\infty\lrcorner\,\Phi(\wt\Omega)\bigr)\big|_{TM} \\
&=i^n n^2 \wt R^{(m_1)}_{[\infty\ol\infty A_2\ol B_2\cdots}\cdots \wt R^{(m_k)}_{\quad \cdots A_n\ol B_n]}\,\theta\wedge\boldsymbol{\theta}^{A_2}\wedge\boldsymbol{\theta}^{\ol B_2}\wedge\cdots\wedge\boldsymbol{\theta}^{A_n}\wedge\boldsymbol{\theta}^{\ol B_n} \big|_{TM}\\
&=i^n n^2 \wt R^{(m_1)}_{[\infty\ol\infty \a_2\ol \b_2\cdots}\cdots \wt R^{(m_k)}_{\quad \cdots \a_n\ol \b_n]}\,\theta\wedge\theta^{\a_2}\wedge\theta^{\ol \b_2}\wedge\cdots\wedge\theta^{\a_n}\wedge\theta^{\ol \b_n}.
\end{align*}
Thus, by using Proposition \ref{diff-form} (iii), we have
\begin{align*}
d\theta\wedge \bigl(N\lrcorner\,\Phi(W)\bigr)\big|_{TM}
&=i\theta\wedge d\theta\wedge\bigl(\mathbb{Z}_{\ol\infty}\lrcorner\,\mathbb{Z}_\infty\lrcorner\,\Phi(\wt\Omega)\bigr)\big|_{TM} \\
&=\frac{i}{n!(n-1)!}\Lambda^{n-1}\bigl(\mathbb{Z}_{\ol\infty}\lrcorner\,\mathbb{Z}_\infty\lrcorner\,\Phi(\wt\Omega)\bigr)\big|_{TM} \theta\wedge(d\theta)^n \\
&=\frac{i}{n!(n-1)!}\bigl((n-1)!\bigr)^2(-i)^{n-1}i^n n^2  \\
&\qquad\qquad \cdot h^{\a_2\ol\b_2}\cdots h^{\a_n\ol\b_n}
\wt R^{(m_1)}_{[\infty\ol\infty \a_2\ol \b_2\cdots}\cdots \wt R^{(m_k)}_{\quad \cdots \a_n\ol \b_n]}
\theta\wedge(d\theta)^n \\
&=-n S^\Phi_{\infty\ol\infty}\theta\wedge(d\theta)^n.
\end{align*}
From this and \eqref{first-term}, we obtain
\[
\int_M \mathcal{L}_N\bigl(\vartheta\wedge\Phi(W)\bigr)
=\int_M \Bigl(-\frac{2}{n}P S^\Phi-n S^\Phi_{\infty\ol\infty}\Bigr)\theta\wedge(d\theta)^n
=-n\int_M \mathcal{I}'_\Phi
\]
and complete the proof.
\end{proof}

\end{document}